\newtheorem{theorem}{Theorem}[section]
\newtheorem{lemma}[theorem]{Lemma}
\newtheorem{proposition}[theorem]{Proposition}
\theoremstyle{definition}
\newtheorem{definition}[theorem]{Definition}
\theoremstyle{remark}
\newtheorem{remark}[theorem]{Remark}
\newtheorem{conjecture}[theorem]{Conjecture}
\numberwithin{equation}{section}
\newcommand{\BD}{\mathcal {BD}}
\newcommand{\C}{ \mathbb C }
\newcommand{\Coe}{ {\rm Coeff} }
\newcommand{\MD}{\mathcal {MD}}
\newcommand{\N}{ \mathbb N }
\newcommand{\Q}{ \mathbb Q }
\newcommand{\qBD}{{\rm q}{\mathcal {BD}}}
\newcommand{\qMD}{{\rm q}{\mathcal {MD}}}
\newcommand{\qMZV}{ {\bf qMZV} }
\newcommand{\w}{\tilde}
\newcommand{\W}{\widetilde}
\newcommand{\Z}{ \mathbb Z }
\def\beq{\begin{equation}}
\def\eeq{\end{equation}}
\numberwithin{equation}{section}
\begin{document}

\title[Multiple $q$-zeta values and traces]
      {Multiple $q$-zeta values and traces}

\author[Zhenbo Qin]{Zhenbo Qin}
\address{Department of Mathematics, University of Missouri, 
         Columbia, MO 65211, USA} 
\email{qinz@missouri.edu}

\date{\today}
\keywords{Multiple $q$-zeta values; brackets; bi-brackets; 
quasi-modular forms; traces; Okounkov's Conjecture; 
Hilbert schemes of points on surfaces.} 
\subjclass[2000]{Primary 11M32; Secondary 14C05.}

\begin{abstract}
Let $(a)_\infty = (a; q)_\infty = \prod_{n=0}^\infty (1-aq^n)$. 
An elegant result of Bloch and Okounkov \cite{BO} states that 
if $x = e^z$, then 
$$
\frac{(xq)_\infty (x^{-1}q)_\infty}{(q)_\infty^2}, 
$$
which appears in various traces in representation theory and 
algebraic geometry, is a formal power series in $z^2$ whose 
coefficient for $z^{2k}$ is a quasi-modular form of weight $2k$. 
Quasi-modular forms are special types of multiple $q$-zeta values. 
In this paper, we generalize this result of Bloch and Okounkov and 
prove that certain other traces are related to multiple $q$-zeta values. 
A simple case of our main results asserts that 
if $x = e^z$ and $y = e^w$, then 
$$
\frac{(xq)_\infty (yq)_\infty}{(q)_\infty (xyq)_\infty}, 
$$
which appears in \cite[Theorem~5]{CW} as a trace (the deformed 
Bloch-Okounkov $1$-point function), 
is a formal power series in $z$ and $w$ whose coefficient for  
$z^mw^n$ is a multiple $q$-zeta value (in the sense of 
\cite{BK3, Oko}) of weight $(m+n)$.
\end{abstract}

\maketitle

\section{\bf Introduction} 
\label{sect_intr}

In the region ${\rm Re} \, s > 1$, the Riemann zeta function is defined by 
$$
\zeta(s) = \sum_{n =1}^\infty \frac{1}{n^{s}}.
$$
The integers $s > 1$ give rise to a sequence of special values of the Riemann zeta function.
Multiple zeta values are series of the form
$$
\zeta(s_1, \ldots, s_k) = \sum_{n_1 > \cdots > n_k > 0} 
\frac{1}{n_1^{s_1} \cdots n_k^{s_k}}
$$
where $s_1, \ldots, s_k$ are positive integers with $s_1 > 1$, 
and $n_1, \ldots, n_k$ denote positive integers.
Multiple $q$-zeta values are $q$-deformations of 
$\zeta(s_1, \ldots, s_k)$, which may take different forms 
\cite{Bac, BK1, BK2, BK3, Bra1, Bra2, Oko, OT, Zhao, Zud}. 
Following \cite{Bac, BK2}, the $\Q$-vector space spanned by 
the weight-$\sum_{i=1}^\ell (s_i +r_i)$ bi-brackets
$$
  \begin{bmatrix} s_1, \ldots, s_\ell\\r_1, \ldots, r_\ell \end{bmatrix}
= \sum_{\substack{u_1 > \cdots > u_\ell > 0\\v_1, \ldots, v_\ell > 0}}
\frac{u_1^{r_1} \cdots u_\ell^{r_\ell}}{r_1! \cdots r_\ell!} \cdot
\frac{v_1^{s_1-1} \cdots v_\ell^{s_\ell-1}}{(s_1-1)! \cdots (s_\ell-1)!} 
\cdot q^{u_1v_1 + \cdots + u_\ell v_\ell} \, \in \, \Q[[q]],
$$
where $\ell \ge 0, s_1, \ldots, s_\ell \ge 1$ and 
$r_1, \ldots, r_\ell \ge 0$ are integers, is denoted by $\BD$. 
Define $\qBD$ (respectively, $\MD$) to be the subspace of $\BD$ spanned 
by all the bi-brackets with $s_1 > 1$ (respectively, 
with $r_1 = \ldots = r_\ell = 0$). Let 
$$
\qMD = \MD \cap \qBD.
$$
The multiple $q$-zeta values defined by Okounkov \cite{Oko} 
are denoted by $Z(s_1, \ldots, s_k)$ where $s_1, \ldots, s_k > 1$ 
are integers (see Definition~\ref{def_qMZV}~(iv)). 
The $\Q$-linear span of all the multiple $q$-zeta values 
$Z(s_1, \ldots, s_k)$ with $s_1, \ldots, s_k > 1$ is denoted by $\qMZV$. 
Let ${\bf QM}$ be the set of all quasi-modular forms 
(of level $1$ on the full modular group ${\rm PSL}(2; \Z)$) over $\Q$. 
By \eqref{Comm_alg} which summarizes various results in 
\cite{Bac, BK1, BK2, BK3}, we have inclusions of $\Q$-algebras: 
\begin{eqnarray}   \label{Intro-Comm_alg}
\begin{matrix}
&&&&\qBD &\subset&\BD   \\
&&&&\cup &&\cup   \\
{\bf QM}&\subset &\qMZV&\subset &\qMD&\subset&\MD.  
\end{matrix}
\end{eqnarray}

Let $q$ be a formal variable. 
Put $(a; q)_n = (1-a)(1-aq) \cdots (1-aq^{n-1})$ and
$$
(a)_\infty = (a; q)_\infty = \prod_{n=0}^\infty (1-aq^n).
$$
An elegant result of Bloch and Okounkov \cite{BO} states that 
if $x = e^z$, then 
\begin{eqnarray}    \label{BOx}
\frac{(xq)_\infty (x^{-1}q)_\infty}{(q)_\infty^2}, 
\end{eqnarray}
which appears in various traces in representation theory and 
algebraic geometry \cite{BO, Car1, Car2, LQW}, 
is a formal power series in $z^2$ such that 
the coefficient of $z^{2k}$ is a quasi-modular form of weight $2k$. 
Our first theorem generalizes this result to multiple $q$-zeta values. 

\begin{theorem}   \label{Intro-AAA06261252pm}
Let $(a)_\infty = (a; q)_\infty = (1-a)(1-aq)(1-aq^2) \cdots$.
Fix $r \ge 1$. For $1 \le j \le r$, let $x_j = e^{z_j}$ 
and $y_j = e^{w_j}$. If $x_1 \cdots x_r = y_1 \cdots y_r$, then 
\begin{eqnarray}    \label{Intro-AAA06261252pm.0}
\prod_{j = 1}^r \frac{(x_jq)_\infty}{(y_jq)_\infty} 
\in \qMZV[[z_1, \ldots, z_r, w_1, \ldots, w_{r-1}]],
\end{eqnarray}
the constant term is $1$, and the coefficient of 
$z_1^{m_1} \cdots z_r^{m_r} w_1^{n_1} \cdots w_{r-1}^{n_{r-1}}$ has weight
$$
m_1 + \ldots + m_r + n_1 + \ldots + n_{r-1}.
$$
\end{theorem}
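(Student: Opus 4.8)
The plan is to pass to logarithms, where the product $\prod_{j}(x_jq)_\infty/(y_jq)_\infty$ linearizes into a sum of single brackets, and then to exponentiate, using that $\qMZV$ is a $\Q$-algebra (by \eqref{Intro-Comm_alg}). All manipulations take place in $\Q[[q]][[z_1,\ldots,z_r,w_1,\ldots,w_{r-1}]]$, where for each fixed monomial in the $z_j,w_j$ the coefficient is a well-defined element of $\Q[[q]]$, so the rearrangements below are legitimate coefficientwise.

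First I would record the standard expansion $\log (aq)_\infty = -\sum_{m\ge1}\frac{a^m}{m}\cdot\frac{q^m}{1-q^m}$, obtained from $(aq)_\infty=\prod_{n\ge1}(1-aq^n)$. Summing over the $r$ factors and writing $x_j=e^{z_j}$, $y_j=e^{w_j}$ gives
\[ \log\prod_{j=1}^r\frac{(x_jq)_\infty}{(y_jq)_\infty}=-\sum_{m\ge1}\frac{q^m}{m(1-q^m)}\sum_{j=1}^r\bigl(e^{mz_j}-e^{mw_j}\bigr). \]
Expanding $e^{mz_j}=\sum_{a\ge0}m^az_j^a/a!$, the inner sum becomes $\sum_{a\ge0}\frac{m^a}{a!}\bigl(\sum_j z_j^a-\sum_j w_j^a\bigr)$, and here two cancellations occur that are the heart of the argument. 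The $a=0$ term vanishes because there are equally many factors in the numerator and the denominator, while the $a=1$ term vanishes \emph{precisely} because the hypothesis $x_1\cdots x_r=y_1\cdots y_r$ forces $\sum_j z_j=\sum_j w_j$. Interchanging the two summations and using $\sum_{m\ge1}m^{a-1}q^m/(1-q^m)=(a-1)!\,[a]$ with $[a]=\begin{bmatrix} a\\0\end{bmatrix}$, I arrive at the clean formula
\[ \log\prod_{j=1}^r\frac{(x_jq)_\infty}{(y_jq)_\infty}=-\sum_{a\ge2}\frac{1}{a}\Bigl(\sum_{j=1}^r z_j^a-\sum_{j=1}^r w_j^a\Bigr)\,[a]. \]

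The remaining input is that every single bracket $[a]=\begin{bmatrix} a\\0\end{bmatrix}$ with $a\ge2$ lies in $\qMZV$: for even $a$ it is a quasi-modular Eisenstein series, hence in ${\bf QM}\subset\qMZV$, and for odd $a$ it is the identification of the depth-one brackets with Okounkov's generators recorded in the preliminaries (cf.\ \cite{Oko,BK3}). Since the displayed logarithm has no constant term in the $z_j,w_j$, its exponential is a well-defined power series with constant term $1$, and the coefficient of any monomial is a finite $\Q$-polynomial in the $[a]$; as $\qMZV$ is a $\Q$-algebra these coefficients lie in $\qMZV$, which establishes \eqref{Intro-AAA06261252pm.0}. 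For the weight, assign degree one to each $z_j$ and to each $w_j$ with $1\le j\le r-1$; the hypothesis makes $w_r=\sum_j z_j-\sum_{j<r}w_j$ homogeneous of degree one, so $\sum_j z_j^a-\sum_j w_j^a$ is homogeneous of degree $a$, matching the weight $a$ of $[a]$. Thus each summand of the logarithm is isobaric, and after exponentiating, the coefficient of $z_1^{m_1}\cdots z_r^{m_r}w_1^{n_1}\cdots w_{r-1}^{n_{r-1}}$ is a combination of products $[a_1]\cdots[a_k]$ with $a_1+\cdots+a_k=m_1+\cdots+m_r+n_1+\cdots+n_{r-1}$, i.e.\ of that weight.

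The main obstacle is the structural input that the \emph{odd} single brackets $[a]$ belong to $\qMZV$ and not merely to $\qMD$: for even $a$ this is classical, but for odd $a$ it rests on the precise relation between the depth-one brackets and Okounkov's values. A secondary point requiring care is the rigorous coefficientwise justification of the interchange of the $m$- and $a$-summations and of the exponentiation in $\Q[[q]][[z_j,w_j]]$; once the single-bracket identity above is in hand, the passage to arbitrary $r$ and the weight bookkeeping are purely formal.
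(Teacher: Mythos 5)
Your proof is correct, and it reaches the conclusion by a genuinely different route from the paper's. The paper first isolates a two-variable building block, Lemma~\ref{AAA06250955am}, showing that $(q)_\infty(xyq)_\infty/\big((xq)_\infty(yq)_\infty\big)\in 1+\qMZV[[z,w]]\cdot zw$ by applying the derivative formula \eqref{BO6.9} to the logarithm of that specific four-factor combination (chosen so that only brackets $[t+j]$ with $t+j\ge 2$ can appear), and then assembles the general balanced product by a telescoping rewriting and induction on $r$. You instead take the logarithm of the entire product at once and observe that the balancing hypotheses --- equally many factors upstairs and downstairs, and $\sum_j z_j=\sum_j w_j$ --- kill exactly the $a=0$ and $a=1$ terms of the expansion, which is precisely what prevents the bracket $[1]$ (which lies in $\MD$ but outside $\qMD\supset\qMZV$ by \eqref{BK1Thm214}) from appearing; this yields the closed isobaric formula $-\sum_{a\ge2}\frac1a\big(\sum_j z_j^a-\sum_j w_j^a\big)[a]$ for the logarithm. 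Both arguments ultimately rest on the same two inputs: the identity \eqref{def_qMZV.04} giving $\sum_{m\ge1}m^{a-1}q^m/(1-q^m)=(a-1)!\,[a]$, and the Bachmann--K\"uhn result \eqref{BK3-2.4} that $[a]\in\qMZV$ for every $a\ge2$ --- the latter is the clean citation for the point you flag as the main obstacle, and it makes your even/odd case distinction unnecessary. Your version avoids the induction, makes the role of the hypothesis $x_1\cdots x_r=y_1\cdots y_r$ completely transparent, and recovers the expansions of Remark~\ref{AAA07020913am} immediately; the paper's version has the expository advantage that the two-variable $D_x$ computation it sets up is reused in Sections~\ref{sect_1to2} and \ref{sect_mpositive}, where the analogous products are no longer balanced and a direct logarithm no longer closes up so neatly.
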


The product in \eqref{Intro-AAA06261252pm.0} appears throughout \cite{CW} 
which deals with deformed Bloch-Okounkov $n$-point functions and 
certain vertex operators related to Macdonald polynomials. 
A simple case of Theorem~\ref{Intro-AAA06261252pm} states that 
if $x = e^z$ and $y = e^w$, then 
$$
\frac{(xq)_\infty (yq)_\infty}{(q)_\infty (xyq)_\infty}
$$
is a formal power series in $z$ and $w$ such that the coefficient of 
$z^mw^n$ is a multiple $q$-zeta value of weight $(m+n)$. 
This is indeed a generalization of the result of Bloch and Okounkov 
mentioned above for \eqref{BOx} in view of ${\bf QM} \subset \qMZV$ 
from \eqref{Intro-Comm_alg}.

Next, we introduce the expression $\mathfrak P_{N}^{a,b}$. 
Note that $\mathfrak P_{N}^{0,0}$ with 
$x_\ell^{i,j} = 1$ for all $1 \le i < j \le N$ and $\ell \in \{1, 2\}$ 
is a special case of the product \eqref{Intro-AAA06261252pm.0}.

\begin{definition}   \label{def-zmvn}
Let $N \ge 2$ be an integer. Fix variables $y_1, \ldots, y_N, a, b$. 
For $1 \le i < j \le N$, $\ell \in \{1, 2\}$ and $1 \le r \le N$, let 
$y_{i,j} = y_i^{-1}y_j, x_\ell^{i,j} = \exp(z_\ell^{i,j}), 
u_\ell^{i,j} = \exp(v_\ell^{i,j}), x_i = \exp(z_i)$ and $u_i = \exp(v_i)$.
We define $\mathfrak P_{N}^{a,b}$ by 
\begin{eqnarray*}     
   \mathfrak P_{N}^{a,b}
&=&\prod_{1 \le i < j \le N} 
   \frac{(x_1^{i,j}x_2^{i,j}y_{i,j})_\infty (y_{i,j})_\infty}
     {(x_1^{i,j}y_{i,j})_\infty (x_2^{i,j}y_{i,j})_\infty} 
   \frac{(u_1^{i,j}u_2^{i,j}y_{i,j}^{-1}q)_\infty (y_{i,j}^{-1}q)_\infty}
     {(u_1^{i,j}y_{i,j}^{-1}q)_\infty (u_2^{i,j}y_{i,j}^{-1}q)_\infty} \\
& &\cdot \prod_{1 \le i \le N} 
   \bigg (\frac{(x_iy_i)_\infty}{(y_i)_\infty} \bigg )^a
   \bigg (\frac{(u_iy_i^{-1}q)_\infty}{(y_i^{-1}q)_\infty} \bigg )^b.
\end{eqnarray*}
regarded as an element in 
$$
\Q[[q, y_{i,j}, y_{i,j}^{-1}, z_\ell^{i,j}, v_\ell^{i,j}, z_r, v_r,
y_r, y_r^{-1}, a, b|1 \le i < j \le N, 1 \le \ell \le 2, 1 \le r \le N]].
$$
\end{definition}

The expressions $\mathfrak P_{N}^{a,b}$ appear as traces in \cite{AQ2} 
which investigates Okounkov's Conjecture \cite{Oko} connecting 
multiple $q$-zeta values and the Hilbert schemes of points on 
a smooth projective surfaces 
\cite{Alh, AQ1, Car1, Car2, CO, Qin1, Qin2, QY, Zhou}. 
Hence the results in this paper will find interesting applications 
in \cite{AQ2}. Additionally, expressions similar to 
$\mathfrak P_{N}^{a,b}$ appear in \cite[Theorem~16]{CW}. 

\begin{theorem}  \label{Intro-yij1toNm>0} 
Let notations be from Definition~\ref{def-zmvn}. Then,
\begin{enumerate}
\item[{\rm (i)}]
for $m_\ell^{i,j}, n_\ell^{i,j}, m_i, n_i \ge 0$, the coefficient of 
$$
\prod_{1 \le i < j \le N, 1 \le \ell \le 2} (z_\ell^{i,j})^{m_\ell^{i,j}}
(v_\ell^{i,j})^{n_\ell^{i,j}} \cdot \prod_{1 \le i \le N} 
z_i^{m_i} v_i^{n_i}
$$ 
in 
$
\Coe_{y_1^0 \cdots y_N^0}(\mathfrak P_{N}^{a,b})
$ 
is contained in $\BD[a, b]$ with weight at most 
\begin{eqnarray}    \label{Intro-yij1toNm>0.01}
\sum_{1 \le i < j \le N, 1 \le \ell \le 2} (m_\ell^{i,j} + n_\ell^{i,j})
+ \sum_{1 \le i \le N} (m_i + n_i)
\end{eqnarray}
and with degree in ($a$ and $b$) at most 
$
    \sum_{1 \le i \le N} (m_i + n_i)
$.

\item[{\rm (ii)}] 
for $a=b=0$ and $m_\ell^{i,j}, n_\ell^{i,j}\ge 0$, the coefficient of 
$$
\prod_{1 \le i < j \le N, 1 \le \ell \le 2} (z_\ell^{i,j})^{m_\ell^{i,j}}
(v_\ell^{i,j})^{n_\ell^{i,j}}
$$ 
in 
$
\Coe_{y_1^0 \cdots y_N^0}(\mathfrak P_{N}^{0,0})
$ 
is contained in $\qBD$ with weight at most 
\begin{eqnarray}    \label{Intro-yij1toNm>0.03}
\sum_{1 \le i < j \le N, 1 \le \ell \le 2} (m_\ell^{i,j} + n_\ell^{i,j}).
\end{eqnarray}
\end{enumerate}
\end{theorem}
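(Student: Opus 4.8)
The plan is to expand every $q$-Pochhammer quotient in $\mathfrak P_N^{a,b}$ as a power series in its exponential variables, record each coefficient as an explicit $q$-series in the remaining variables $y_{i,j}=y_i^{-1}y_j$ and $y_i$, and then prove that applying $\Coe_{y_1^0\cdots y_N^0}$ to the whole product reassembles these $q$-series into bi-brackets of controlled weight. The basic computation is that of a logarithm: with $x=e^z$,
$$
\log\frac{(xyq)_\infty}{(yq)_\infty}
=-\sum_{m\ge 1}\frac{z^m}{m!}\sum_{k\ge 1}k^{\,m-1}y^{k}\sum_{w\ge 1}q^{wk},
$$
and the non-$q$ quotient $\frac{(xy)_\infty}{(y)_\infty}$ differs only by replacing $\sum_{w\ge 1}$ with $\sum_{w\ge 0}$, that is, by the extra rational term $\sum_{k\ge 1}k^{\,m-1}y^{k}$ coming from $w=0$.

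Reading off a single factor, the $w\ge 1$ part of the $z^{m}$-coefficient of the logarithm is, after setting $y=1$, a rational multiple of the weight-$m$ mono-bracket $\begin{bmatrix} m\\0\end{bmatrix}$, while the $w=0$ part (present only for non-$q$ factors) is a rational function of $y$. The two-variable $(i,j)$-factors behave the same way once one observes $x_1^kx_2^k+1-x_1^k-x_2^k=(x_1^k-1)(x_2^k-1)$, so that their logarithms begin at \emph{bilinear} order and their $z_1^{m_1}z_2^{m_2}$-coefficients are (at $y=1$, up to scalars) the mono-brackets $\begin{bmatrix} m_1+m_2\\0\end{bmatrix}$ with $m_1,m_2\ge 1$; the same holds for the $u$-halves in the variables $v_\ell^{i,j}$. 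This bilinearity is the structural reason for the dichotomy between the two parts: in part~(ii) every surviving factor forces an index $s=m_1+m_2\ge 2$, hence $s_1>1$ and membership in $\qBD$, whereas in part~(i) the single-index factors, whose logarithms begin linearly in $z_i$ or $v_i$, can produce the weight-$1$ bracket $\begin{bmatrix} 1\\0\end{bmatrix}$ with $s_1=1$, which is why the conclusion there is only $\BD[a,b]$. The bound on the degree in $(a,b)$ is immediate from exponentiation: the $a$- and $b$-dependence sits entirely in the single-index factors $\left(1+f_i\right)^{a}$ and $\left(1+g_i\right)^{b}$ with $f_i=O(z_i)$ and $g_i=O(v_i)$; expanding $\left(1+f_i\right)^{a}=\sum_{p\ge 0}\binom{a}{p}f_i^{\,p}$ and likewise in $b$, the extraction of $z_i^{m_i}v_i^{n_i}$ forces the $a$-degree to be $\le m_i$ and the $b$-degree $\le n_i$, summing to the bound $\sum_i(m_i+n_i)$.

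It remains to multiply the factors and apply $\Coe_{y_1^0\cdots y_N^0}$. Exponentiating the sum of logarithms produces products of the rank-one series above; a generic term is a product of pieces $k_t^{\,c_t}y_\bullet^{\pm k_t}q^{w_tk_t}$, where the $y_\bullet$ are the $y_{i,j}$ and $y_i$. The constant-term operator imposes the linear relations on the tracking exponents dictated by $\prod_i y_i^0$, and summing over the configurations that survive these relations, together with the summations $\sum_{w_t\ge 1}$, reduces each $z,v$-coefficient to a combination of bi-brackets with coefficients in $\Q$ (for part~(ii)) or in $\Q[a,b]$ (for part~(i)), i.e.\ to an element of $\BD$ respectively $\BD[a,b]$; the weight is bounded by the total $z,v$-degree, giving \eqref{Intro-yij1toNm>0.01} and \eqref{Intro-yij1toNm>0.03}, and the bound is only an inequality because the $w=0$ rational prefactors of the non-$q$ factors can lower the weight. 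This is consistent with the remark before Definition~\ref{def-zmvn}: at $x_\ell^{i,j}=1$ the first halves drop out and $\mathfrak P_N^{0,0}$ collapses to a product of the shape \eqref{Intro-AAA06261252pm.0}, for which Theorem~\ref{Intro-AAA06261252pm} already gives the bi-bracket (indeed $\qMZV$) conclusion. The step I expect to be the main obstacle is precisely this constant-term analysis: proving that $\Coe_{y_1^0\cdots y_N^0}$ of the expanded product genuinely collapses to the \emph{ordered} nested sums defining honest bi-brackets, rather than to some larger unstructured family of $q$-series, while simultaneously tracking how the non-$q$ prefactors affect the weight. Verifying the refinement in~(ii) — that every term surviving the constant term still satisfies $s_1>1$ — is the most delicate point, since a priori the $w=0$ contributions could interfere with the leading index.
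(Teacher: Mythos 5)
Your setup is sound and matches the paper's: the logarithmic expansion via \cite[(6.9)]{BO}, the observation that $x_1^kx_2^k+1-x_1^k-x_2^k=(x_1^k-1)(x_2^k-1)$ forces the $(i,j)$-factors to begin at bilinear order (hence exponents of $d$ at least $1$, which is what ultimately yields $s_1>1$ and membership in $\qBD$ for part~(ii)), and the binomial expansion of $(1+f_i)^a$ giving the degree bound in $a,b$. But the proof has a genuine gap exactly where you flag ``the main obstacle'': you assert, without argument, that applying $\Coe_{y_1^0\cdots y_N^0}$ and summing over the surviving configurations ``reduces each $z,v$-coefficient to a combination of bi-brackets.'' This is not a routine verification; it is the entire technical content of the theorem, and nothing in your sketch addresses it. After extracting the constant term you are left with sums of the shape
$$
\sum_{\substack{n_1,\ldots,n_r\ge 0,\; n_{r+1},\ldots,n_{r+s}\ge 1,\; d_k\ge 1\\ d_1+\cdots+d_r=d_{r+1}+\cdots+d_{r+s}}}\prod_k d_k^{t_k}\, q^{\sum_k n_kd_k},
$$
and for $N\ge 3$ a system of $N$ such linear constraints coupling the $d$-variables across all the factors. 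These are \emph{not} bi-brackets: the $n_k$ are unordered, and the linear relations among the $d_k$ have no counterpart in Definition~\ref{def-BiBracket}.

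The paper closes this gap in two stages that your proposal would need to reproduce. First, the unordered sums over the $n_k$ are converted to ordered nested sums by partitioning according to which $n_k$ coincide and invoking a Faulhaber-type statement (Lemma~\ref{sumofpowers}) to control the resulting inner sums over compositions of a fixed $\w d_j$; this is Lemma~\ref{finite_redu}, and it is also where one must check that the weight can only decrease and that the positivity of the $d$-exponents is preserved. Second, and harder, the linear constraint $d_1+\cdots+d_r=d_{r+1}+\cdots+d_{r+s}$ must be eliminated: Lemma~\ref{lemma_t2t1yq} does this by a double induction, solving for one $d$-variable, re-expanding, and splitting the resulting sum into the three regimes $>$, $=$, $<$, with the $=$ piece feeding back into the induction and the unconstrained piece finally producing honest bi-brackets. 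For general $N$ one must further iterate this over the $N$ simultaneous constraints, eliminating variables according to a descending chain of index subsets $I_0\supset I_1\supset I_2\supset\cdots$ (Lemma~\ref{lemma_t2t1yq1toNm>0}); one also has to note that the $N$ constraints are dependent (they sum to a triviality), which is used when $|I_1|=|I_0|$. Without some version of this elimination argument your proof does not establish membership in $\BD$ at all, and in particular the ``most delicate point'' you mention for part~(ii) --- that $s_1>1$ survives the constant-term extraction --- is precisely what the bookkeeping in Lemmas~\ref{finite_redu} and~\ref{lemma_t2t1yq} (the tracked conditions ``if $t_1,\ldots,t_{r+s}\ge 1$ then $t_1'\ge 2$'') is designed to guarantee. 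A minor further inaccuracy: the weight bound is an inequality mainly because each elimination of a $d$-variable via Lemma~\ref{sumofpowers}~(ii) drops the degree by one, not only because of the $w=0$ prefactors.
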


The conclusions of Theorem~\ref{Intro-yij1toNm>0} is weaker than 
those of Theorem~\ref{Intro-AAA06261252pm}. It would be interesting to 
see whether Theorem~\ref{Intro-yij1toNm>0} could be sharpened 
in the sense that the algebras $\BD$ and $\qBD$ in 
Theorem~\ref{Intro-yij1toNm>0} are replaced by $\qMZV$. 
Closely related to these discussions is the conjecture asserting 
that 
$$
\BD = \mathcal {MD}
$$
(\cite[Conjecture~4.3]{Bac} and \cite[Conjecture~3.2~(B2)]{BK2}). 
In the same spirit, we propose the following conjecture. 

\begin{conjecture}   \label{Conj-Qin} 
\begin{enumerate}
\item[{\rm (i)}] $\qBD = \qMD$. 

\item[{\rm (ii)}] $\qMZV$ coincides with the subspace 
of $\BD$ spanned by all the bi-brackets
$$
\begin{bmatrix} s_1, \ldots, s_\ell\\r_1, \ldots, r_\ell \end{bmatrix},
\qquad \ell \ge 0, s_1, \ldots, s_\ell \ge 2.
$$
\end{enumerate}
\end{conjecture}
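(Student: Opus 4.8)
The plan is to reduce both parts of Conjecture~\ref{Conj-Qin} to explicit $\Q$-linear change-of-basis formulas among the three families that all live inside $\BD$: Okounkov's values $Z(s_1, \ldots, s_k)$, the brackets $\begin{bmatrix} s_1, \ldots, s_\ell\\0, \ldots, 0 \end{bmatrix}$, and the general bi-brackets. In each case one direction of the desired inclusion is essentially formal, while the entire subtlety lies in carrying the admissibility conditions ($s_1 > 1$ for part (i), and $s_1, \ldots, s_\ell \ge 2$ for part (ii)) through the conversion. For part (i) I would first record the trivial inclusion $\qMD = \MD \cap \qBD \subseteq \qBD$, and then observe that the reverse inclusion is a refinement of the open conjecture $\BD = \MD$: if $\BD = \MD$ held, then $\qBD \subseteq \BD = \MD$ and hence $\qBD \subseteq \MD \cap \qBD = \qMD$. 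Thus a direct attack on (i) amounts to proving the bracket-expansion of an arbitrary bi-bracket \emph{while keeping track of the leading index}, showing that a bi-bracket with $s_1 > 1$ expands into brackets each of which again has leading index $> 1$.

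To produce such an expansion I would use the algebra structure of $\BD$ together with the weight-raising derivation $\Dq$. The starting point is the Bachmann--K\"uhn generating-series presentation of the bi-brackets and the two standard mechanisms that create bottom-row entries $r_i \neq 0$ out of pure brackets: the action of $\Dq$ (which raises weight by $2$ and turns brackets into combinations of bi-brackets) and the partition (conjugation) duality, a relation of the form
\begin{eqnarray*}
\begin{bmatrix} s_1, \ldots, s_\ell\\r_1, \ldots, r_\ell \end{bmatrix}
= \pm \begin{bmatrix} r_\ell + 1, \ldots, r_1 + 1\\s_\ell - 1, \ldots, s_1 - 1 \end{bmatrix},
\end{eqnarray*}
which exchanges the two rows with a shift and a reversal. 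I would then argue by induction on the weight and on the total bottom-row degree $r_1 + \cdots + r_\ell$: using duality to move the problem to bi-brackets with controlled bottom row, inverting the $\Dq$-relations to trade one unit of bottom-row degree for brackets of smaller degree, and invoking the quasi-shuffle (stuffle) product of $\BD$ to recombine the pieces. The delicate point throughout is that both the stuffle product and the duality can a priori lower a leading index to $1$; the heart of the argument for (i) is to show these lowerings always cancel when one starts from $s_1 > 1$.

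For part (ii) the strategy is the same two-sided one. The forward inclusion $\qMZV \subseteq \langle \text{bi-brackets with all } s_i \ge 2\rangle$ should follow from the dictionary expressing each Okounkov value $Z(s_1, \ldots, s_k)$ in the bi-bracket basis: one compares Okounkov's numerator polynomials with the factorial normalization used in the definition of the bi-brackets and performs the triangular unfolding of $1/(1-q^{n})^{s}$, checking that every bi-bracket that occurs has all top entries $\ge 2$ precisely when all $s_i > 1$. The reverse inclusion requires inverting this dictionary: one must show that every bi-bracket with $s_1, \ldots, s_\ell \ge 2$ is a $\Q$-combination of the $Z(s_1, \ldots, s_k)$ with $s_i > 1$. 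Here the conjugation duality is again useful, since it identifies the all-$s_i \ge 2$ bi-brackets with bi-brackets whose entire bottom row is $\ge 1$, a symmetry one hopes to match against the defining symmetry of Okounkov's family.

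The main obstacle is that the two \emph{reverse} inclusions --- $\qBD \subseteq \qMD$ in (i) and the backward direction in (ii) --- are in effect assertions that no hidden relations are missing, i.e. dimension/basis statements of exactly the same depth as the conjecture $\BD = \MD$ itself. The change-of-basis matrices I would write down are triangular in weight and length, so they are formally invertible; the real difficulty is to prove that the inverse expansion \emph{terminates within the admissible sub-families} rather than leaking into brackets with leading index equal to $1$ or into non-admissible $Z$'s. I expect that closing this gap will require either a genuinely new structural input (a complete, provably sufficient set of relations among bi-brackets, perhaps via a Gr\"obner-type or Hopf-algebraic argument) or, failing a full proof, extensive verification of the change-of-basis and its admissibility in low weights to the point where the pattern becomes unambiguous.
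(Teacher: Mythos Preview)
The statement you are attempting to prove is labeled \emph{Conjecture}~\ref{Conj-Qin} in the paper and is presented there without proof; it is offered as an open problem, in the same spirit as the Bachmann--K\"uhn conjecture $\BD = \MD$ to which the paper explicitly compares it. There is therefore no proof in the paper to compare your proposal against.

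Your proposal is not a proof but a strategy sketch, and you yourself correctly identify the decisive gap: both reverse inclusions amount to showing that certain triangular change-of-basis expansions stay inside the admissible sub-families (leading index $s_1>1$ for part~(i), all $s_i\ge 2$ for part~(ii)). As you observe, this is a statement of the same depth as the open conjecture $\BD=\MD$. The partition/conjugation duality and the $\Dq$-mechanism you invoke are genuine tools from Bachmann's work, but neither is known to preserve the required admissibility constraints through the induction you outline; the stuffle product does in general produce terms with leading index $1$, and no cancellation mechanism for these terms has been established. The obstacle you flag in your final paragraph is thus not a technical detail awaiting completion but the entire content of the conjecture, and the paper makes no claim to resolve it.
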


Section~\ref{sect_Basic_def} recalls basic definitions and notations 
about multiple $q$-zeta values. Theorem~\ref{Intro-AAA06261252pm} 
(= Theorem~\ref{AAA06261252pm}) is proved in Section~\ref{sect_Expressing}. 
Section~\ref{sect_1to2} verifies Theorem~\ref{Intro-yij1toNm>0} 
for the basic case $\mathfrak P_{2}^{0,0}$. We confirm 
Theorem~\ref{Intro-yij1toNm>0} (= Theorem~\ref{yij1toNm>0}) in 
Section~\ref{sect_mpositive}. The main ideas in our proofs are to 
use inductions, sums of powers, and formula \cite[(6.9)]{BO} 
which expresses the higher derivatives of $e^{f(z)}$ in terms of 
those of $f(z)$. 


\section{\bf Basic definitions and notations of multiple $q$-zeta values} 
\label{sect_Basic_def}

In this section, we will review basic definitions and notations of 
multiple $q$-zeta values. Moreover, we will recall and propose 
some conjectures relating brackets and bi-brackets. 

The following definitions are from \cite{BK1, BK3, Oko}.

\begin{definition}  \label{def_qMZV}
Let $\N = \{1,2,3, \ldots\}$, and fix a subset $S \subset \N$. 
\begin{enumerate}
\item[{\rm (i)}]
Let $Q = \{Q_s(t)\}_{s \in S}$ where each $Q_s(t) \in \Q[t]$ is a polynomial 
with $Q_s(0) = 0$ and $Q_s(1) \ne 0$. For $s_1, \ldots, s_\ell \in S$ 
with $\ell \ge 1$, define
$$
Z_Q(s_1, \ldots, s_\ell)
= \sum_{n_1 > \cdots > n_\ell \ge 1} \prod_{i=1}^\ell 
  \frac{Q_{s_i}(q^{n_i})}{(1-q^{n_i})^{s_i}}
\in \Q[[q]].
$$
Put $Z_Q(\emptyset) = 1$, and define $Z(Q, S)$ to be the $\Q$-linear span 
of the set
$$
\{Z_Q(s_1, \ldots, s_\ell)| \, \ell \ge 0 \text{ and }
s_1, \ldots, s_\ell \in S \}.
$$

\item[{\rm (ii)}]
Define $\mathcal {MD} = Z(Q^E, \N)$ where $Q^E = \{Q_s^E(t)\}_{s \in \N}$
and 
$$
Q_s^E(t) = \frac{tP_{s-1}(t)}{(s-1)!}
$$ 
with $P_s(t)$ being the Eulerian polynomial defined by 
\begin{eqnarray}   \label{def_qMZV.01}
\frac{tP_{s-1}(t)}{(1-t)^s} = \sum_{d=1}^\infty d^{s-1}t^d.
\end{eqnarray}  
We have $t P_{0}(t) = t$.
For $s > 1$, the polynomial $t P_{s-1}(t)$ has degree $s - 1$. 
For $s_1, \ldots, s_\ell \in \N$ with $\ell \ge 1$, define
\begin{eqnarray}   \label{def_qMZV.02}
[s_1, \ldots, s_\ell] 
= Z_{Q^E}(s_1, \ldots, s_\ell)
= \sum_{n_1 > \cdots > n_\ell \ge 1} \prod_{i=1}^\ell 
  \frac{Q_{s_i}^E(q^{n_i})}{(1-q^{n_i})^{s_i}}.
\end{eqnarray}  

\item[{\rm (iii)}]
Define $\text{q}\mathcal {MD}$ be the subspace of $\mathcal {MD}$ linearly 
spanned by $1$ and all the brackets $[s_1, \ldots, s_\ell]$ with $s_1 > 1$. 

\item[{\rm (iv)}]
Define $\qMZV = Z(Q^O, \N_{>1})$ where 
$Q^O = \{Q_s^O(t)\}_{s \in \N_{>1}}$ with 
$$
  Q_s^O(t) 
= \begin{cases}
  t^{s/2},           &\text{if $s \ge 2$ is even,} \\
  t^{(s-1)/2} (t+1), &\text{if $s \ge 3$ is odd,}
  \end{cases}
$$
and $\N_{>1} = \{2,3,4, \ldots\}$.
For $s_1, \ldots, s_\ell \in \N$ with $\ell \ge 1$, define
$$
Z(s_1, \ldots, s_\ell) = Z_{Q^O}(s_1, \ldots, s_\ell).
$$
\end{enumerate}
\end{definition}

We see from Definition~\ref{def_qMZV}~(ii) that for $s \ge 1$, 
\begin{eqnarray}   \label{def_qMZV.03}
(s-1)! \cdot \frac{Q_{s}^E(t)}{(1-t)^s} = \sum_{d=1}^\infty d^{s-1}t^d.
\end{eqnarray} 
It follows that 
\begin{eqnarray*}   
  (s-1)! \cdot [s] 
= (s-1)! \cdot \sum_{n \ge 1} \frac{Q_{s}^E(q^n)}{(1-q^n)^s} 
= \sum_{n \ge 1} \sum_{d=1}^\infty d^{s-1} q^{nd}
\end{eqnarray*} 
for $s \ge 1$. Therefore, we obtain
\begin{eqnarray}   \label{def_qMZV.04}
  [s] 
= \frac{1}{(s-1)!} \sum_{n, d \ge 1} d^{s-1} q^{nd}
= \frac{1}{(s-1)!} \sum_{d \ge 1} d^{s-1} \frac{q^{d}}{1-q^d}.
\end{eqnarray}  
More generally, for $s_1, \ldots, s_\ell \ge 1$, we have
\begin{eqnarray}   \label{def_qMZV.05}
  [s_1, \ldots, s_\ell] 
= \frac{1}{(s_1-1)! \cdots (s_\ell-1)!} 
  \sum_{\substack{n_1 > \cdots > n_\ell \ge 1\\d_1, \ldots, d_\ell \ge 1}} 
  d_1^{s_1-1} \cdots d_\ell^{s_\ell-1} q^{n_1d_1+\ldots+n_\ell d_\ell}.
\end{eqnarray} 

By the Theorem~2.13 and Theorem~2.14 in \cite{BK1},
$\text{q}\mathcal {MD}$ is a subalgebra of $\mathcal {MD}$, 
and $\mathcal {MD}$ is a polynomial ring over $\text{q}\mathcal {MD}$ 
with indeterminate $[1]$:
\begin{eqnarray}   \label{BK1Thm214}
\mathcal {MD} = \text{q}\mathcal {MD}[\,[1]\,].
\end{eqnarray}
By the Proposition~2.2 and Theorem~2.4 in \cite{BK3}, 
$Z(\{Q_s^E(t)\}_{s \in \N_{>1}}, \N_{>1})$ is a subalgebra of 
$\mathcal {MD}$ as well and 
$\qMZV = Z(\{Q_s^E(t)\}_{s \in \N_{>1}}, \N_{>1})$. Therefore,   
\begin{eqnarray}   \label{BK3-2.4}
\qMZV = Z(\{Q_s^E(t)\}_{s \in \N_{>1}}, \N_{>1}) \subset \qMD \subset \MD
\end{eqnarray} 
are inclusions of $\Q$-algebras.  
For instance, by \cite[Example~2.6]{BK3}, 
\begin{eqnarray}   \label{BK3-2.6}
Z(2) = [2], \quad Z(3) = 2[3], \quad Z(4) = [4] - \frac16 [2].
\end{eqnarray}

The graded ring ${\bf QM}$ of quasi-modular forms (of level $1$  
on the full modular group ${\rm PSL}(2; \Z)$) over $\Q$ is 
the polynomial ring over $\Q$ generated by the Eisenstein series 
$G_2(q), G_4(q)$ and $G_6(q)$:
$$
{\bf QM} = \Q[G_2, G_4, G_6] = {\bf M}[G_2]
$$
where ${\bf M} = \Q[G_4, G_6]$ is the graded ring of modular forms 
(of level $1$) over $\Q$, and 
\begin{eqnarray*}    
G_{2k} = G_{2k}(q) =\frac{1}{(2k-1)!} \cdot 
\left (-\frac{B_{2k}}{4k} + \sum_{n \ge 1} 
\Big ( \sum_{d|n} d^{2k-1} \Big )q^n \right )
\end{eqnarray*}
where $B_i \in \Q, i \ge 2$ are the Bernoulli numbers defined by 
$$
\frac{t}{e^t - 1} = 1 - \frac{t}{2} + \sum_{i =2}^{+\infty} B_i \cdot \frac{t^i}{i!}.
$$
The grading is to assign $G_2, G_4, G_6$ weights $2, 4, 6$ respectively.
By \cite[p.6]{BK3}, 
\begin{eqnarray*}
G_2 &=& -\frac{1}{24} + Z(2),      \\
G_4 &=& \frac{1}{1440} + Z(2) + \frac{1}{6}Z(4),     \\
G_6 &=& -\frac{1}{60480} + \frac{1}{120} Z(2) + \frac{1}{4} Z(4) + Z(6).
\end{eqnarray*}
It follows that
\begin{eqnarray}  \label{20170812522pm} 
{\bf QM} = \Q[Z(2), Z(4), Z(6)] \subset \qMZV.
\end{eqnarray}

Next, we recall bi-brackets from \cite{Bac, BK2}.

\begin{definition}   \label{def-BiBracket}
\begin{enumerate}
\item[{\rm (i)}] 
For integers $\ell \ge 0, s_1, \ldots, s_\ell \ge 1$ and 
$r_1, \ldots, r_\ell \ge 0$, define
$$
  \begin{bmatrix} s_1, \ldots, s_\ell\\r_1, \ldots, r_\ell \end{bmatrix}
= \sum_{\substack{u_1 > \cdots > u_\ell > 0\\v_1, \ldots, v_\ell > 0}}
\frac{u_1^{r_1} \cdots u_\ell^{r_\ell}}{r_1! \cdots r_\ell!} \cdot
\frac{v_1^{s_1-1} \cdots v_\ell^{s_\ell-1}}{(s_1-1)! \cdots (s_\ell-1)!} 
\cdot q^{u_1v_1 + \cdots + u_\ell v_\ell} \, \in \, \Q[[q]].
$$
This $q$-series is called a {\it bi-bracket} of depth $\ell$ and of weight 
$\sum_{i=1}^\ell (s_i +r_i)$.

\item[{\rm (ii)}] 
Define $\BD$ to be the vector space spanned by the bi-brackets 
over $\Q$.

\item[{\rm (iii)}] 
Define $\qBD$ to be the subspace spanned by all the
bi-brackets with $s_1 > 1$. 
%
\end{enumerate}
\end{definition}

Note that 
$$
  \begin{bmatrix} s_1, \ldots, s_\ell\\0, \ldots, 0 \end{bmatrix}
= [s_1, \ldots, s_\ell].
$$
By \cite[(2.7)]{BK2}, 
\beq  \label{BK2-2.7}
  \begin{bmatrix} s_1, \ldots, s_\ell\\r_1, \ldots, r_\ell \end{bmatrix}
= \sum_{n_1 > \cdots > n_\ell > 0} \prod_{i=1}^\ell 
  \left (\frac{n_i^{r_i}}{r_i!} \cdot 
  \frac{q^{n_i}P_{s_i-1}(q^{n_i})}{(s_i-1)! \cdot (1-q^{n_i})^{s_i}} 
  \right ).
\eeq
By \cite[Theorem~A~i)]{Bac}, $\BD$ is a $\Q$-algebra and contains 
$\MD$ as a subalgebra. Similar proofs show that 
$\qBD$ is a subalgebra of $\BD$ and contains $\qMD$ as a subalgebra. 
Together with \eqref{BK3-2.4} and \eqref{20170812522pm}, 
we obtain inclusions of $\Q$-algebras:
\begin{eqnarray}   \label{Comm_alg}  
\begin{matrix}
&&&&\qBD &\subset&\BD   \\
&&&&\cup &&\cup   \\
{\bf QM}&\subset &\qMZV&\subset &\qMD&\subset&\MD.  
\end{matrix}
\end{eqnarray} 

\section{\bf Proof of Theorem~\ref{Intro-AAA06261252pm}} 
\label{sect_Expressing}

In this section, we prove Theorem~\ref{Intro-AAA06261252pm} 
(= Theorem~\ref{AAA06261252pm}). To begin with, we verify  
a special case of Theorem~\ref{Intro-AAA06261252pm} by showing that  
$$
\frac{(q)_\infty (xyq)_\infty}{(xq)_\infty (yq)_\infty} \in 1 
+ \qMZV[[z, w]] \cdot zw
$$
where $(a)_\infty = (a; q)_\infty$, $x = e^{z}$ and $y = e^{w}$. 
This special case together with an induction yields the proof of 
Theorem~\ref{Intro-AAA06261252pm}.

\begin{lemma}   \label{AAA06250955am}
Let $(a)_\infty = (a; q)_\infty$, $x = e^{z}$ and $y = e^{w}$. Then,
$$
\frac{(q)_\infty (xyq)_\infty}{(xq)_\infty (yq)_\infty} \in 1 
+ \qMZV[[z, w]] \cdot zw.
$$
Moreover, for $m, n \ge 1$, the coefficient of $z^m w^n$ has weight $m+n$.
\end{lemma}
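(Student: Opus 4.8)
The plan is to pass to logarithms, where the four infinite products become tractable linear combinations of a single ``power-sum'' $q$-series, and then to exponentiate at the very end. First I would use the standard expansion
$$
\log (aq)_\infty = \sum_{k=1}^\infty \log(1-aq^k) = -\sum_{k=1}^\infty \frac{a^k}{k}\cdot \frac{q^k}{1-q^k},
$$
valid as a formal identity after summing the geometric series in the $q$-variable. Applying this to the four factors and forming the alternating combination prescribed by $\frac{(q)_\infty(xyq)_\infty}{(xq)_\infty(yq)_\infty}$, the coefficient of $\frac1k\frac{q^k}{1-q^k}$ becomes $-(1+(xy)^k-x^k-y^k)$, and the key algebraic observation is the factorization
$$
1+(xy)^k-x^k-y^k = (1-x^k)(1-y^k).
$$
Hence the logarithm equals $-\sum_{k\ge1}\frac1k\frac{q^k}{1-q^k}(1-x^k)(1-y^k)$.

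Next I would substitute $x=e^z$ and $y=e^w$ and expand. Since $1-e^{kz}=-\sum_{m\ge1}k^m z^m/m!$, the two minus signs cancel and
$$
(1-x^k)(1-y^k)=\sum_{m,n\ge1}\frac{k^{m+n}}{m!\,n!}\,z^m w^n,
$$
so that interchanging the formal sums gives the coefficient of $z^m w^n$ in the logarithm, for $m,n\ge1$, as $-\frac{1}{m!\,n!}\sum_{k\ge1}k^{m+n-1}\frac{q^k}{1-q^k}$. By \eqref{def_qMZV.04} this inner sum is exactly $(m+n-1)!\,[m+n]$, so the coefficient is $-\frac{(m+n-1)!}{m!\,n!}[m+n]$, a rational multiple of the single bracket of weight $m+n$. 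Because $m,n\ge1$ forces $m+n\ge2$, each such $[m+n]$ lies in $\qMZV$ (it is the depth-one generator $Z_{Q^E}(m+n)$ of $Z(\{Q^E_s\}_{s\in\N_{>1}},\N_{>1})=\qMZV$) and is homogeneous of weight $m+n$. Thus the logarithm is an element $L\in\qMZV[[z,w]]\cdot zw$ whose $z^mw^n$-coefficient has weight $m+n$.

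Finally I would exponentiate: the expression equals $e^L=1+\sum_{j\ge1}L^j/j!$. Every monomial of $L$ is divisible by $zw$, hence so is every monomial of $L^j$ for $j\ge1$, and therefore $e^L\in 1+\qMZV[[z,w]]\cdot zw$ provided the coefficients remain in $\qMZV$; this is where I would invoke that $\qMZV$ is a $\Q$-algebra (from \eqref{Comm_alg}), so that the products of bracket-coefficients occurring in $L^j$ stay inside $\qMZV$. For the weight assertion I would track the grading: a monomial $z^Mw^N$ arising in $L^j$ is a product of $j$ coefficients of respective weights $m_i+n_i$ with $\sum m_i=M$ and $\sum n_i=N$, hence of total weight $M+N$, and since products in $\qMZV$ add weights, the $z^Mw^N$-coefficient of $e^L$ is homogeneous of weight $M+N$. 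The computations leading to $L$ are routine once the logarithm is taken, so the step demanding the most care is this last one—confirming that exponentiation preserves both membership in $\qMZV$ and the exact weight—which rests entirely on the algebra-and-grading structure recorded in \eqref{Comm_alg} together with the identification of the single brackets $[s]$ ($s\ge2$) as elements of $\qMZV$, rather than on any new estimate.
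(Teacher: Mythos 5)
Your proof is correct and follows essentially the same route as the paper's: both arguments reduce to expanding the logarithm of the product, identifying its $z^mw^n$-coefficient as a rational multiple of the single bracket $[m+n]\in\qMZV$ (the paper via the formula \eqref{BO6.9} and the computation $D_x^tf|_{x=1}=-\sum_{j\ge1}\frac{(t+j-1)!}{j!}[t+j]w^j$, you via the symmetric factorization $1+(xy)^k-x^k-y^k=(1-x^k)(1-y^k)$), and then exponentiating using that $\qMZV$ is a $\Q$-algebra with additive weights. The only cosmetic difference is that your factorization exhibits the $zw$-divisibility of the logarithm symmetrically in $z$ and $w$, whereas the paper differentiates in $z$ first and then expands in $w$.
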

\begin{proof}
For $m \ge 0$, let $a_{m} = \Coe_{z^m} 
\big ((q)_\infty (xyq)_\infty (xq)_\infty^{-1} (yq)_\infty^{-1} 
\big ) \in \C[[q, w]]$. Then,
$$
  a_{m}
= \frac{1}{m!} \frac{{\rm d}^m}{{\rm d}z^m}
  \big ((q)_\infty (xyq)_\infty (xq)_\infty^{-1} (yq)_\infty^{-1} \big )
  \Big |_{z=0}.
$$
By \cite[(6.9)]{BO}, for a function $f(z)$ of a variable $z$, we have
\begin{eqnarray}  \label{BO6.9}
  \frac{{\rm d}^m}{{\rm d}z^m} e^{f(z)} 
= e^{f(z)} \cdot m! \sum_{k_1+2k_2+3k_3+\cdots = m}
  \frac{1}{k_1! k_2! k_3! \cdots} \prod_{t \ge 1}
  \left (\frac{f^{(t)}}{t!} \right )^{k_t}. 
\end{eqnarray} 
Put 
\begin{eqnarray}  \label{Dx}
D_x = x \frac{\rm d}{{\rm d}x}.
\end{eqnarray} 
Setting $z = 0$ on both sides of \eqref{BO6.9} and 
using ${\rm d}/{\rm d}z = D_x$, we get
\begin{eqnarray}  \label{BO6.9z=0}
  \frac{{\rm d}^m}{{\rm d}z^m} e^{f(z)} \Big |_{z=0}
= e^{f(0)} \cdot m! \sum_{k_1+2k_2+3k_3+\cdots = m}
  \frac{1}{k_1! k_2! k_3! \cdots} \prod_{t \ge 1}
  \left (\frac{D_x^t f}{t!} \right )^{k_t} \Big |_{z=0}. 
\end{eqnarray}
Applying this to $f(z) = \ln\big ((q)_\infty (xyq)_\infty 
(xq)_\infty^{-1} (yq)_\infty^{-1} \big )$ yields
\begin{eqnarray}  \label{AAA06250955am.1}
  a_{m}
= \sum_{k_1+2k_2+3k_3+\cdots = m}
  \frac{1}{k_1! k_2! k_3! \cdots} \prod_{t \ge 1}
  \left (\frac{D_x^t f}{t!} \Big |_{x=1}\right )^{k_t}. 
\end{eqnarray} 
Since $f(z) = 
\ln\big ((q)_\infty (yq)_\infty^{-1} \big ) + \sum_{n \ge 1} \ln (1-xyq^n)
- \sum_{n \ge 1} \ln (1-xq^n)$,
\begin{eqnarray*} 
   D_xf  
&=&-\sum_{n \ge 1} \frac{xyq^n}{1-xyq^n} + \sum_{n \ge 1} \frac{xq^n}{1-xq^n} \\
&=&-\sum_{n \ge 1} \sum_{d \ge 1} (xyq^n)^d 
   + \sum_{n \ge 1} \sum_{d \ge 1} (xq^n)^d.
\end{eqnarray*} 
It follows that for $t \ge 1$, we have
\begin{eqnarray*} 
   D_x^tf|_{x=1}
&=&-\sum_{n \ge 1} \sum_{d \ge 1} d^{t-1} (yq^n)^d 
   + \sum_{n \ge 1} \sum_{d \ge 1} d^{t-1} q^{nd} \\
&=&-\sum_{n \ge 1} \sum_{d \ge 1} d^{t-1}q^{nd} \sum_{j \ge 0} 
   \frac{(w d)^j}{j!} 
   + \sum_{n \ge 1} \sum_{d \ge 1} d^{t-1} q^{nd}  \\
&=&-\sum_{j \ge 1} \frac{w^j}{j!}
   \sum_{n \ge 1} \sum_{d \ge 1} d^{t+j-1}q^{nd}   \\
&=&-\sum_{j \ge 1} \frac{(t+j-1)!}{j!} [t+j] w^j
\end{eqnarray*} 
by \eqref{def_qMZV.04}. 
By \eqref{BK3-2.4}, $D_x^tf|_{x=1} \in \qMZV[[w]] \cdot w$ 
if $t \ge 1$, and the coefficient of $w^j$ in $D_x^tf|_{x=1}$ 
has weight $t+j$. Since $\qMZV$ is a $\Q$-algebra, 
\eqref{AAA06250955am.1} implies 
$$
a_{m} \in \qMZV[[w]]\cdot w
$$
if $m \ge 1$. Moreover, the coefficient of $w^n$ in $a_m$ 
has weight 
$
\sum_{t \ge 1} tk_t+n = m+n. 
$
Since $a_0 = 1$, we conclude that 
$$
\frac{(q)_\infty (xyq)_\infty}{(xq)_\infty (yq)_\infty} 
= 1 + \sum_{m \ge 1} a_m z^m
\in 1 + \qMZV[[z, w]] \cdot zw. 
$$
In addition, for $m,n \ge 1$, the coefficient of $z^m w^n$ has weight $m+n$.
\end{proof}

\begin{remark}   \label{AAA07020913am}
Let $x = e^{z}$ and $y = e^{w}$.
We see from the proof of Lemma~\ref{AAA06250955am} that
\begin{eqnarray*}     
& &\frac{(q)_\infty (xyq)_\infty}{(xq)_\infty (yq)_\infty}   \\
&=&1 + \sum_{m \ge 1} z^m \cdot \sum_{k_1+2k_2+3k_3+\cdots = m}
  \frac{1}{k_1! k_2! k_3! \cdots} \prod_{t \ge 1}
  \left (-\sum_{j \ge 1} \frac{(t+j-1)!}{t! \cdot j!} [t+j] 
  w^j\right )^{k_t}    \\
&=&1 - [2]zw - [3] (z + w)zw - [4]z^3 w 
   + \left (\frac12 [2]^2 - \frac32 [4] \right ) z^2 w^2 
   - [4]z w^3 + O(5)
\end{eqnarray*}
where $O(5)$ denotes the terms with the combined degree of $z$ and $w$ 
being at least $5$. Moreover, for $m \ge 2$, 
the coefficients of $z^{m-1} w$ and $zw^{m-1}$ are equal to $-[m]$.
\end{remark}

\begin{theorem}   \label{AAA06261252pm}
Let $(a)_\infty = (a; q)_\infty = (1-a)(1-aq)(1-aq^2) \cdots$.
Fix $r \ge 1$. For $1 \le j \le r$, let $x_j = e^{z_j}$ 
and $y_j = e^{w_j}$. If $x_1 \cdots x_r = y_1 \cdots y_r$,
then 
\begin{eqnarray}    \label{AAA06261252pm.0}
\prod_{j = 1}^r \frac{(x_jq)_\infty}{(y_jq)_\infty} 
\in \qMZV[[z_1, \ldots, z_r, w_1, \ldots, w_{r-1}]],
\end{eqnarray}
the constant term is $1$, and the coefficient of 
$z_1^{m_1} \cdots z_r^{m_r} w_1^{n_1} \cdots w_{r-1}^{n_{r-1}}$ has weight
$$
m_1 + \ldots + m_r + n_1 + \ldots + n_{r-1}.
$$
\end{theorem}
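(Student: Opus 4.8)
The plan is to reduce the general statement to the already-proved $r=2$ case of Lemma~\ref{AAA06250955am} by induction on $r$. The key structural observation is that the constraint $x_1 \cdots x_r = y_1 \cdots y_r$, written additively via $x_j = e^{z_j}$ and $y_j = e^{w_j}$, reads $z_1 + \cdots + z_r = w_1 + \cdots + w_r$, so $w_r$ is determined by the remaining variables: $w_r = z_1 + \cdots + z_r - (w_1 + \cdots + w_{r-1})$. This explains why the power series in \eqref{AAA06261252pm.0} lives in the ring with indeterminates $z_1,\ldots,z_r,w_1,\ldots,w_{r-1}$ only, with $w_r$ eliminated. For $r=1$ the constraint forces $x_1 = y_1$, hence $z_1 = w_1$, the product is identically $1$, and there is nothing to prove.

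For the inductive step I would telescope the product into factors each matching the shape of Lemma~\ref{AAA06250955am}. First I would rewrite
\begin{eqnarray*}
\prod_{j=1}^r \frac{(x_jq)_\infty}{(y_jq)_\infty}
= \prod_{j=1}^r \frac{(x_jq)_\infty (y_jq)_\infty^{-1}}{1}
\end{eqnarray*}
and insert telescoping partial-product variables: set $X_k = x_1 \cdots x_k$ and $Y_k = y_1 \cdots y_k$ so that $X_r = Y_r$ by hypothesis. The idea is to compare each $x_j/y_j$ against the running discrepancy $X_{j-1}/Y_{j-1}$, converting the product into a chain of ratios of the form $\frac{(q)_\infty (ABq)_\infty}{(Aq)_\infty (Bq)_\infty}$ to which Lemma~\ref{AAA06250955am} applies directly. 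Concretely, I would group consecutive terms so that at each stage I peel off one factor whose four infinite products assemble into the Lemma's pattern, with the leftover discrepancy absorbed into the variable feeding the next stage; the constraint $X_r = Y_r$ guarantees that the final discrepancy is trivial, so no stray $(q)_\infty$ factors survive and the product is genuinely a power series with constant term $1$.

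For the weight bookkeeping, I would track it through each application of the Lemma. The Lemma asserts that in $\frac{(q)_\infty(xyq)_\infty}{(xq)_\infty(yq)_\infty}$ the coefficient of $z^mw^n$ has weight $m+n$, i.e. each unit of total degree in the exponent variables contributes exactly one unit of weight. Since the substitution $w_r = z_1 + \cdots + z_r - w_1 - \cdots - w_{r-1}$ is degree-preserving (it is linear and homogeneous of degree $1$ in the surviving variables), and multiplication of $\qMZV$-valued series adds both degrees and weights compatibly, the weight of the coefficient of $z_1^{m_1}\cdots z_r^{m_r} w_1^{n_1} \cdots w_{r-1}^{n_{r-1}}$ equals its total degree $m_1 + \cdots + m_r + n_1 + \cdots + n_{r-1}$, exactly as claimed. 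That $\qMZV$ is a $\Q$-algebra (from \eqref{BK3-2.4}) is what lets the products of Lemma-outputs stay inside $\qMZV[[\cdots]]$.

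The main obstacle I anticipate is making the telescoping regrouping precise while correctly handling the elimination of $w_r$: one must verify that after substituting the constraint, every factor in the decomposition really does match the Lemma's pattern with \emph{both} of its exponent variables among the surviving $z_j, w_j$ (or a homogeneous linear combination thereof), so that no factor secretly depends on $w_r$ in a way that reintroduces the eliminated variable or breaks the power-series (rather than Laurent-series) structure. A clean way to organize this is to prove the identity
\begin{eqnarray*}
\prod_{j=1}^r \frac{(x_jq)_\infty}{(y_jq)_\infty}
= \prod_{j=1}^{r-1} \frac{(x_jq)_\infty (Y_jq)_\infty}{(y_jq)_\infty (X_jq)_\infty}
\cdot \frac{(X_{r-1}q)_\infty (x_rq)_\infty}{(Y_{r-1}q)_\infty (y_rq)_\infty}
\end{eqnarray*}
and then use $X_r = Y_r$ to recognize the last factor, after the substitution for $w_r$, as a Lemma-type expression; checking this algebraic identity and confirming the degree/weight claims survive the substitution is the step requiring the most care.
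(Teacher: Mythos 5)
Your overall strategy (induction on $r$, reduction to Lemma~\ref{AAA06250955am}, the algebra structure of $\qMZV$, and the degree-preserving elimination of $w_r$) is the same as the paper's, and your treatment of the base case and of the weight bookkeeping is fine. However, the concrete decomposition you propose to carry out the inductive step contains a genuine error. The identity
$$
\prod_{j=1}^r \frac{(x_jq)_\infty}{(y_jq)_\infty}
= \prod_{j=1}^{r-1} \frac{(x_jq)_\infty (Y_jq)_\infty}{(y_jq)_\infty (X_jq)_\infty}
\cdot \frac{(X_{r-1}q)_\infty (x_rq)_\infty}{(Y_{r-1}q)_\infty (y_rq)_\infty}
$$
is false for $r \ge 3$: cancelling the common factors, its right-hand side equals
$\prod_{j=1}^{r} \frac{(x_jq)_\infty}{(y_jq)_\infty} \cdot \prod_{j=1}^{r-2} \frac{(Y_jq)_\infty}{(X_jq)_\infty}$, and the extra product is not $1$ in general. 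Worse, even if one repaired the bookkeeping, the individual factors $\frac{(x_jq)_\infty (Y_jq)_\infty}{(y_jq)_\infty (X_jq)_\infty}$ for $j < r$ do not satisfy the balancing condition $x_jY_j = y_jX_j$ (which would force $X_{j-1}=Y_{j-1}$), so they are not of Lemma type and are not even controlled power series in the surviving variables; only the last factor, where $X_{r-1}x_r = Y_{r-1}y_r$ holds by hypothesis, has the required form. You flagged this as the delicate point but did not resolve it, and as written the step fails.

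The paper's fix is a different and simpler regrouping: merge the first two factors on each side rather than comparing against running partial products. Explicitly,
$$
\prod_{j = 1}^r \frac{(x_jq)_\infty}{(y_jq)_\infty}
= \left(\frac{(q)_\infty(x_1x_2q)_\infty}{(x_1q)_\infty (x_2q)_\infty}\right)^{-1}
\cdot \frac{(q)_\infty (y_1y_2q)_\infty}{(y_1q)_\infty(y_2q)_\infty}
\cdot \left(\frac{(x_1x_2q)_\infty}{(y_1y_2q)_\infty}\prod_{j = 3}^r \frac{(x_jq)_\infty}{(y_jq)_\infty}\right),
$$
which is a trivially verifiable identity. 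The first factor is the inverse of a Lemma-type series (a power series with constant term $1$ and coefficients in $\qMZV$ of weight equal to total degree, hence so is its inverse since $\qMZV$ is a $\Q$-algebra), the second is a Lemma-type series, and the third is the theorem's product for $r-1$ pairs with variables $x_1x_2, x_3, \ldots, x_r$ and $y_1y_2, y_3, \ldots, y_r$, which still satisfy the hypothesis $(x_1x_2)x_3\cdots x_r = (y_1y_2)y_3\cdots y_r$; induction then closes the argument. Your weight argument carries over verbatim to this decomposition because the substitutions $z_1+z_2$ and $w_1+w_2$ for the merged variables are linear and homogeneous of degree $1$.
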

\begin{proof}
First of all, since $x_1 \cdots x_r = y_1 \cdots y_r$, we get 
$$
w_r = z_1 + \ldots + z_r - w_1 - \ldots - w_{r-1}.
$$

Next, it is clear that the constant term of the product in 
\eqref{AAA06261252pm.0} is $1$. 
Moreover, the case $r=1$ is trivially true. 
For $r \ge 2$, we can rewrite the product in 
\eqref{AAA06261252pm.0} as
$$
  \prod_{j = 1}^r \frac{(x_jq)_\infty}{(y_jq)_\infty} 
= \left (\frac{(q)_\infty(x_1x_2q)_\infty}{(x_1q)_\infty (x_2q)_\infty} 
  \right )^{-1} \cdot
  \frac{(q)_\infty (y_1y_2q)_\infty}{(y_1q)_\infty(y_2q)_\infty} \cdot
  \left (\frac{(x_1x_2q)_\infty}{(y_1y_2q)_\infty}
  \prod_{j = 3}^r \frac{(x_jq)_\infty}{(y_jq)_\infty} \right ).
$$
Therefore, \eqref{AAA06261252pm.0} and the statement about the weights 
follow from Lemma~\ref{AAA06250955am}, the fact that $\qMZV$ is 
a $\Q$-algebra, and induction on $r$.
\end{proof}

\section{\bf The basic case $\mathfrak P_{2}^{0,0}$} 
\label{sect_1to2}

Recall the trace $\mathfrak P_{N}^{a,b}$ from Definition~\ref{def-zmvn}.
In this section, we will verify Theorem~\ref{Intro-yij1toNm>0} 
for the basic case 
\begin{eqnarray*}     
   \mathfrak P_{2}^{0,0}
&=&\frac{(x_1x_2y)_\infty (y)_\infty}{(x_1y)_\infty (x_2y)_\infty} 
   \frac{(u_1u_2y^{-1}q)_\infty (y^{-1}q)_\infty}
     {(u_1y^{-1}q)_\infty (u_2y^{-1}q)_\infty}     \\
&\in&\Q[[q, y, y^{-1}, z_1, z_2, v_1, v_2]]
\end{eqnarray*}   
where $x_j = e^{z_j}$ and $u_j = e^{v_j}$ for $j \in \{1, 2\}$. 
The purpose of presenting this basic case first is to illustrate 
the main ideas in the next section where we prove 
Theorem~\ref{Intro-yij1toNm>0} (= Theorem~\ref{yij1toNm>0}) 
in full generality. 

The following lemma regarding the sums of powers should be well-known 
to the experts in analytic number theory. However, the author is unable 
to find a reference. For completeness, we present its proof here.

\begin{lemma}  \label{sumofpowers} 
Fix integers $n \ge 0$, $r \ge 1$ and $t_1, \ldots, t_{r} \ge 1$. Then, 
\begin{enumerate}
\item[{\rm (i)}]
$\displaystyle{\sum_{d_1, \ldots, d_r \ge 1, \, d_1+ \ldots + d_r \le n}
  \prod_{i=1}^{r} d_i^{t_i}}$
is a polynomial in $\Q[n] \cdot n$ of degree $\sum_{i=1}^{r} t_i+r$.

\item[{\rm (ii)}]
$\displaystyle{\sum_{d_1, \ldots, d_r \ge 1, \, d_1+ \ldots + d_r = n}
  \prod_{i=1}^{r} d_i^{t_i}}$
is a polynomial in $\Q[n] \cdot n$ of degree $\sum_{i=1}^{r} t_i+r-1$.
\end{enumerate}
\end{lemma}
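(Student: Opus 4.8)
The plan is to prove both parts by induction on $r$, treating part~(ii) as the inductive engine and deriving part~(i) from it by summation. The key classical fact I would invoke is \emph{Faulhaber's formula}: for any fixed $t \ge 1$, the sum $\sum_{d=1}^{m} d^t$ is a polynomial in $m$ of degree $t+1$ with zero constant term (i.e.\ it lies in $\Q[m]\cdot m$), and more generally $\sum_{d=1}^{m} d^{t}$ evaluated telescopes so that the single-variable case $r=1$ of part~(i) is immediate. For part~(ii) with $r=1$, the ``sum'' over $d_1 = n$ is just $n^{t_1}$, a polynomial in $\Q[n]\cdot n$ of degree $t_1 = \sum t_i + r - 1$, so the base case holds trivially.

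For the inductive step of part~(ii), I would peel off the last variable. Writing $S_r^{=n}(t_1,\ldots,t_r)$ for the sum in part~(ii), I condition on the value $d_r = k$ ranging from $1$ to $n - (r-1)$, so that
\begin{eqnarray*}
S_r^{=n}(t_1,\ldots,t_r)
= \sum_{k=1}^{n-r+1} k^{t_r} \cdot S_{r-1}^{=n-k}(t_1,\ldots,t_{r-1}).
\end{eqnarray*}
By the inductive hypothesis, $S_{r-1}^{=n-k}$ is a polynomial in $(n-k)$ lying in $\Q[n-k]\cdot(n-k)$ of degree $\sum_{i=1}^{r-1} t_i + (r-1) - 1$. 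Expanding this polynomial in powers of $k$ and $n$ and multiplying by $k^{t_r}$, each resulting term is of the form $n^{a} k^{b}$ with $a + b \le \sum_{i=1}^{r-1}t_i + r - 2$ plus the contribution $t_r$ from the explicit factor. Summing $\sum_{k=1}^{n-r+1} k^{b}$ via Faulhaber raises the degree in $n$ by exactly one (producing a polynomial in $\Q[n]\cdot n$), so the total degree becomes $\sum_{i=1}^{r-1} t_i + r - 2 + t_r + 1 = \sum_{i=1}^{r} t_i + r - 1$, as claimed. The degree count must be checked to be sharp — that the leading term does not cancel — which follows because the leading coefficients in Faulhaber's formula are positive and all summands are positive.

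Part~(i) then follows from part~(ii) by a single outer summation: since
\begin{eqnarray*}
\sum_{\substack{d_1,\ldots,d_r \ge 1 \\ d_1 + \cdots + d_r \le n}} \prod_{i=1}^{r} d_i^{t_i}
= \sum_{m=r}^{n} S_r^{=m}(t_1,\ldots,t_r),
\end{eqnarray*}
and $S_r^{=m}$ is a polynomial in $\Q[m]\cdot m$ of degree $\sum_i t_i + r - 1$, one more application of Faulhaber to the outer sum over $m$ produces a polynomial in $\Q[n]\cdot n$ of degree $\sum_i t_i + r$. The main obstacle I anticipate is purely bookkeeping rather than conceptual: I must argue carefully that the output lands in $\Q[n]\cdot n$ (vanishing at $n=0$, which is geometrically clear since an empty sum is zero) and that the degree bound is attained exactly, not merely as an upper bound. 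Keeping track of the exact degree through the two nested Faulhaber summations — each of which bumps the degree by one — is where an off-by-one error is most likely, so I would verify the arithmetic $\big(\sum t_i + r - 2\big) + t_r + 1 = \sum t_i + r - 1$ and $\big(\sum t_i + r - 1\big) + 1 = \sum t_i + r$ explicitly at each stage.
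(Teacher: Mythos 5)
Your proposal follows essentially the same route as the paper --- induction on $r$ powered by Faulhaber's formula --- with the logical dependency between the two parts reversed: the paper proves (i) by induction and then deduces (ii) from (i) by substituting $d_r = n - d_1 - \cdots - d_{r-1}$ and expanding that power, whereas you prove (ii) by induction and deduce (i) by summing $S_r^{=m}$ over $m \le n$. Both orderings work and cost the same.

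One step needs tightening, and it is precisely the one you flagged as the likely trouble spot. In your inductive step you sum $k = d_r$ from $1$ to $n-r+1$, so the Faulhaber polynomials you produce are evaluated at $n-r+1$ rather than at $n$. The resulting polynomial $P(n)$ is then only shown to agree with the actual sum for $n \ge r-1$, and ``the empty sum is zero'' does not by itself give $P(0)=0$: the polynomial $F_b(m)=\sum_{k=1}^{m}k^b$, extended polynomially, does not vanish at general negative integers, so $F_b(n-r+1)$ need not lie in $\Q[n]\cdot n$ term by term. The clean fix is to extend the range to $1 \le k \le n$, which is legitimate because the inductive polynomial $S_{r-1}^{=m}$ represents an empty sum, hence vanishes, at the integers $m = 0, 1, \ldots, r-2$; after that every Faulhaber sum has upper limit $n$ and visibly lands in $\Q[n]\cdot n$. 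The paper sidesteps the issue differently, by noting at the outset that since every $t_i \ge 1$ the constraints $d_i \ge 1$ can be relaxed to $d_i \ge 0$ without changing the sum, so all its Faulhaber summations run from $0$ to $n$. Finally, your remark that the degree bound is attained needs more than ``all summands are positive'' (after expanding $(n-k)^D$ the terms alternate in sign); a growth estimate $S_r^{=n}\asymp n^{\sum_i t_i+r-1}$ or the Beta-integral identity $\sum_j\binom{D}{j}(-1)^j/(t_r+j+1)>0$ settles it, though the paper itself also only establishes the upper bound, which is all that is used later.
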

\begin{proof}
(i) We use induction on $r$. Denote 
$\displaystyle{\sum_{d_1, \ldots, d_r \ge 1, \, d_1+ \ldots + d_r \le n}
\prod_{i=1}^{r} d_i^{t_i}}$ by $A_r$. Then
$$
A_r = \sum_{\substack{d_1, \ldots, d_r \ge 0, \, d_1+ \ldots + d_r \le n}}
  \prod_{i=1}^{r} d_i^{t_i}
$$
since $t_1, \ldots, t_{r} \ge 1$. When $r = 1$, the lemma is 
the classical Faulhaber's formula. Assume our lemma holds for $r-1$. 
We have
$$
A_r = \sum_{\substack{0 \le d_r \le n}} d_r^{t_r} 
  \sum_{\substack{d_1, \ldots, d_{r-1} \ge 0, \, d_1+ \ldots + d_{r-1} 
  \le n-d_r}} \prod_{i=1}^{r-1} d_i^{t_i}.
$$
By induction, $\sum_{\substack{d_1, \ldots, d_{r-1} \ge 0, 
\, d_1+ \ldots + d_{r-1} \le n-d_r}} \prod_{i=1}^{r-1} d_i^{t_i}$ 
is a polynomial in 
$$
\Q[(n-d_r)] \cdot (n-d_r)
$$ 
of degree $\sum_{i=1}^{r-1} (t_i+1)$. 
So $A_r$ is a $\Q$-linear combination of expressions of the form
$$
\sum_{\substack{0 \le d_r \le n}} d_r^{t_r} (n^a d_r^b)
$$
with $a, b \ge 0$ and $a+b \le \sum_{i=1}^{r-1} (t_i+1)$. 
By Faulhaber's formula,
$$
  \sum_{\substack{0 \le d_r \le n}} d_r^{t_r} (n^a d_r^b)
= n^a \cdot \sum_{\substack{0 \le d_r \le n}} d_r^{t_r+b}
$$
is a polynomial in $\Q[n] \cdot n^{a+1}$ of degree $a + (t_r+b+1) \le 
\sum_{i=1}^{r} (t_i+1)$. It follows that $A_r$ is a polynomial in 
$\Q[n] \cdot n$ of degree $\sum_{i=1}^{r} t_i+r$.

(ii) Denote $\displaystyle{\sum_{d_1, \ldots, d_r \ge 1, 
\, d_1+ \ldots + d_r = n} \prod_{i=1}^{r} d_i^{t_i}}$ by $B_r$.
Then, 
\begin{eqnarray*}    
   B_r 
&=&\sum_{d_1, \ldots, d_r \ge 0, \, d_1+ \ldots + d_r = n}
   \prod_{i=1}^{r} d_i^{t_i}   \\
&=&\sum_{d_1, \ldots, d_{r-1} \ge 0, \, d_1+ \ldots + d_{r-1} \le n}
\prod_{i=1}^{r-1} d_i^{t_i} \cdot (n - d_1 - \ldots - d_{r-1})^{t_r}.
\end{eqnarray*}
Thus, $B_r$ is a $\Q$-linear combination of expressions of the form
$$
n^a \cdot \sum_{d_1, \ldots, d_{r-1} \ge 0, \, d_1+ \ldots + d_{r-1} \le n}
\prod_{i=1}^{r-1} d_i^{\w t_i}
$$
where $a \ge 0$, $\w t_i \ge 1$ and $a + \sum_{i=1}^{r-1} \w t_i 
= \sum_{i=1}^{r} t_i$. By (i), $B_r$ is a polynomial in 
$\Q[n] \cdot n$ of degree $a + \sum_{i=1}^{r-1} \w t_i + r - 1
= \sum_{i=1}^{r} t_i+r-1$.
\end{proof}

Our next lemma shows that certain summation of the form 
\begin{eqnarray}  \label{summation.1}
\sum_{\substack{n_1, \ldots, n_r \ge 0, d_1, \ldots, d_r \ge 1\\
    n_{r+1}, \ldots, n_{r+s}, d_{r+1}, \ldots, d_{r+s} \ge 1\\
    d_1+ \ldots + d_r = d_{r+1} + \ldots + d_{r+s}}}
\end{eqnarray}
can be written as a finite $\Q$-linear combination of expressions of the form
\begin{eqnarray}   \label{summation.2}
\sum_{\substack{\w n_1 > \ldots > \w n_f \ge 0, 
    \w d_1, \ldots, \w d_f \ge 1\\
    \w n_{f+1}> \ldots > \w n_{f+g} \ge 1, 
    \w d_{f+1}, \ldots, \w d_{f+g} \ge 1\\
    \w d_1+ \ldots + \w d_f = \w d_{f+1} + \ldots + \w d_{f+g}}}.
\end{eqnarray}
Note that in \eqref{summation.1}, there is no relation among 
$n_1, \ldots, n_r \ge 0$ (respectively, among 
$n_{r+1}, \ldots, n_{r+s} \ge 1$). 
However, in \eqref{summation.2}, we impose the relations 
$\w n_1 > \ldots > \w n_f \ge 0$ (respectively,  
$\w n_{f+1}> \ldots > \w n_{f+g} \ge 1$).

\begin{lemma}  \label{finite_redu} 
Fix integers $r, s \ge 1$ and $u_1, \ldots, u_{r+s}, 
t_1, \ldots, t_{r+s} \ge 0$. Then, 
\begin{eqnarray}  \label{finite_redu.01}
\sum_{\substack{n_1, \ldots, n_r \ge 0, d_1, \ldots, d_r \ge 1\\
    n_{r+1}, \ldots, n_{r+s}, d_{r+1}, \ldots, d_{r+s} \ge 1\\
    d_1+ \ldots + d_r = d_{r+1} + \ldots + d_{r+s}}}
  \prod_{i=1}^{r+s} n_i^{u_i}d_i^{t_i} \cdot q^{\sum_{i=1}^{r+s} n_i d_i}
\end{eqnarray}
is a finite $\Q$-linear combination of expressions of the form
\begin{eqnarray}  \label{finite_redu.02}
\sum_{\substack{\w n_1 > \ldots > \w n_f \ge 0, 
    \w d_1, \ldots, \w d_f \ge 1\\
    \w n_{f+1}> \ldots > \w n_{f+g} \ge 1, 
    \w d_{f+1}, \ldots, \w d_{f+g} \ge 1\\
    \w d_1+ \ldots + \w d_f = \w d_{f+1} + \ldots + \w d_{f+g}}}
    \prod_{i=1}^{f+g} \w n_i^{\w u_i} \w d_i^{\w t_i} 
    \cdot q^{\sum_{i=1}^{f+g} \w n_i \w d_i}
\end{eqnarray}
where $r, s \ge 1$, $\w u_i, \w t_i \ge 0$ and 
$\sum_{i=1}^{f+g} (\w u_i + \w t_i + 1)  
\le \sum_{i=1}^{r+s} (s_i+t_i + 1)$. Moreover,
if $t_1, \ldots, t_{r+s} \ge 1$, then $\w t_1, \ldots, \w t_{f+g} \ge 1$. 
If $u_1, \ldots, u_{r+s} \ge 1$, 
then $\w u_1, \ldots, \w u_{f+g} \ge 1$.
\end{lemma}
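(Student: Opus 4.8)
The plan is to pass from the unordered sum \eqref{finite_redu.01} to the ordered form \eqref{finite_redu.02} in two moves: a de-symmetrization of the indices $n_i$ within each of the two groups, and a merging of the $d_i$'s that become attached to coincident $n_i$'s. The first group $\{1, \ldots, r\}$ and the second group $\{r+1, \ldots, r+s\}$ are handled separately, and no block I form will mix them; this is exactly what preserves the two-group structure and the balance condition $d_1 + \cdots + d_r = d_{r+1} + \cdots + d_{r+s}$.

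First I would split the sum over $n_1, \ldots, n_{r+s}$ according to order type. Summing over all $(n_1, \ldots, n_r)$ with $n_i \ge 0$ is the same as summing over all ordered set partitions $(B_1, \ldots, B_f)$ of $\{1, \ldots, r\}$, whose blocks record which $n_i$ coincide, together with the strictly decreasing common values $\w n_1 > \cdots > \w n_f \ge 0$; the second group likewise yields blocks $(B_{f+1}, \ldots, B_{f+g})$ of $\{r+1, \ldots, r+s\}$ and values $\w n_{f+1} > \cdots > \w n_{f+g} \ge 1$. There being only finitely many ordered set partitions, \eqref{finite_redu.01} becomes a finite sum over order types. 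Writing $\w d_a = \sum_{i \in B_a} d_i$ and using that every $n_i$ with $i \in B_a$ equals $\w n_a$, the exponent collapses to $\sum_{i} n_i d_i = \sum_a \w n_a \w d_a$, and the balance condition becomes $\sum_{a \le f} \w d_a = \sum_{a > f} \w d_a$, which is precisely the relation in \eqref{finite_redu.02}.

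The substantive step is to merge, within each block, the variables $d_i$ ($i \in B_a$) into the single variable $\w d_a$. Setting $\w u_a = \sum_{i \in B_a} u_i$, I must sum $\prod_{i \in B_a} d_i^{t_i}$ over the compositions $\{d_i \ge 1 : \sum_{i \in B_a} d_i = \w d_a\}$, and then expand the result into monomials $\w d_a^{\w t_a}$ to match \eqref{finite_redu.02}. Here I expect the main obstacle: the composition sum is naturally indexed by $\w d_a \ge |B_a|$, whereas \eqref{finite_redu.02} sums over $\w d_a \ge 1$, so I must check that extending the range down to $1$ adds nothing. A short generating-function count settles both the polynomiality and this vanishing at once. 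The block generating function $\prod_{i \in B_a} \sum_{d \ge 1} d^{t_i} x^d$ equals $N(x)/(1-x)^{D}$ with $D = \sum_{i \in B_a}(t_i + 1)$, where $N$ is divisible by $x^{|B_a|}$ and satisfies $\deg N \le D$. Hence the coefficient of $x^m$ vanishes for $m < |B_a|$, yet is already given by a polynomial in $m$ of degree at most $D - 1 = \sum_{i \in B_a} t_i + |B_a| - 1$ for every $m \ge 1$; comparing the two forces this polynomial to vanish at $m = 1, \ldots, |B_a| - 1$. Therefore the composition sum, viewed as a polynomial in $\w d_a$, may be summed over $\w d_a \ge 1$ without correction, and its monomial expansion has the shape of \eqref{finite_redu.02}.

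It remains to read off the numerical claims. For the weight, each block contributes $\w u_a + \w t_a + 1 \le \sum_{i \in B_a} u_i + (\sum_{i \in B_a} t_i + |B_a| - 1) + 1 = \sum_{i \in B_a}(u_i + t_i + 1)$, and summing over the finitely many blocks of all order types gives $\sum_a (\w u_a + \w t_a + 1) \le \sum_i (u_i + t_i + 1)$, as required. Since every block is nonempty, $\w u_a = \sum_{i \in B_a} u_i \ge 1$ whenever $u_1, \ldots, u_{r+s} \ge 1$. Finally, if $t_1, \ldots, t_{r+s} \ge 1$, then within each block all $t_i \ge 1$, so by Lemma~\ref{sumofpowers}~(ii) the block polynomial lies in $\Q[\w d_a] \cdot \w d_a$ and hence has no constant term; its monomials therefore carry $\w t_a \ge 1$. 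Assembling the finitely many order types then exhibits \eqref{finite_redu.01} as a finite $\Q$-linear combination of expressions \eqref{finite_redu.02} with all the stated properties.
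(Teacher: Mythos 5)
Your proposal is correct and follows essentially the same route as the paper's proof: decompose the sum according to the order type of the $n_i$'s within each of the two groups (the paper writes this with consecutive equality blocks, you with ordered set partitions, which amounts to the same thing), merge the $d_i$'s in each block via Lemma~\ref{sumofpowers}~(ii), and track the weight and positivity claims blockwise. Your generating-function check that the block polynomial vanishes for $\w d_a < |B_a|$ (so the outer range may start at $\w d_a \ge 1$) is a careful justification of a point the paper absorbs into the statement of Lemma~\ref{sumofpowers}~(ii), but it is not a different argument.
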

\begin{proof}
Denote \eqref{finite_redu.01} by $W$. Then, $W$ can be decomposed as 
the finite sum of expressions of {\it the form \eqref{finite_redu.01} 
together with the following conditions}
$$
n_{1} = \ldots = n_{r_1} > n_{r_1+1} = \ldots = n_{r_1+r_2} > \cdots 
> n_{r_1+\ldots + r_{f-1} +1} = \ldots = n_{r_1+\ldots + r_{f}},
$$
$$
n_{r+1} = \ldots = n_{r+s_1} > 
\cdots > n_{r+s_1+\ldots + s_{g-1} +1} = \ldots = n_{r+ s_1+\ldots + s_g}
$$ 
where $f, g, r_i, s_i \ge 1$, $r_1+\ldots + r_{f} = r$ and 
$s_1+\ldots + s_g = s$. Put 
$$
\w u_j = \sum_{i=1}^{r_j} u_{r_1+\ldots+r_{j-1}+i}
$$ 
for $1 \le j \le f$ and 
$\w u_{f+j} 
= \sum_{i=1}^{s_j} u_{r+s_1+\ldots+s_{j-1}+i}$ for $1 \le j \le g$.
Then we see that $W$ is the finite sum of expressions of the form
$$
\sum_{\substack{\w n_1 > \ldots > \w n_f \ge 0, 
    \w d_1, \ldots, \w d_f \ge 1\\
    \w n_{f+1}> \ldots > \w n_{f+g} \ge 1, 
    \w d_{f+1}, \ldots, \w d_{f+g} \ge 1\\
    \w d_1+ \ldots + \w d_f = \w d_{f+1} + \ldots + \w d_{f+g}}}
    \prod_{j=1}^{f} \w n_j^{\w u_j} \left (\sum_{d_{r_1+\ldots+r_{j-1}+1} 
    + \ldots + d_{{r_1+\ldots+r_{j}}} = \w d_j} \prod_{i=1}^{r_j} 
    d_{r_1+\ldots+r_{j-1}+i}^{t_{r_1+\ldots+r_{j-1}+i}} \right )
$$
$$
\cdot \prod_{j=1}^{g} \w n_{f+j}^{\w u_{f+j}} 
    \left (\sum_{d_{r+s_1+\ldots+s_{j-1}+1} 
    + \ldots + d_{{r+s_1+\ldots+s_{j}}} = \w d_j} \prod_{i=1}^{s_j} 
    d_{r+s_1+\ldots+s_{j-1}+i}^{t_{r+s_1+\ldots+s_{j-1}+i}} \right )
    \cdot q^{\sum_{i=1}^{f+g} \w n_i \w d_i}.
$$
By Lemma~\ref{sumofpowers}~(ii), $W$ 
is a finite $\Q$-linear combination of expressions of the form
$$
\sum_{\substack{\w n_1 > \ldots > \w n_f \ge 0, 
    \w d_1, \ldots, \w d_f \ge 1\\
    \w n_{f+1}> \ldots > \w n_{f+g} \ge 1, 
    \w d_{f+1}, \ldots, \w d_{f+g} \ge 1\\
    \w d_1+ \ldots + \w d_f = \w d_{f+1} + \ldots + \w d_{f+g}}}
    \prod_{i=1}^{f+g} \w n_i^{\w u_i} \w d_i^{\w t_i} 
    \cdot q^{\sum_{i=1}^{f+g} \w n_i \w d_i}.
$$
where $\w u_i, \w t_i \ge 0$ and $\sum_{i=1}^{f+g} (\w u_i + \w t_i)$ 
is at most equal to
\begin{eqnarray*}
& &\sum_{i=1}^{r+s} u_i 
   + \sum_{j=1}^f \left (\sum_{i=1}^{r_j} t_{r_1+\ldots+
       r_{j-1}+i} + r_j - 1 \right )   \\
& &+ \sum_{j=1}^g \left (\sum_{i=1}^{s_j} t_{r+s_1+\ldots+
       s_{j-1}+i} + s_j - 1 \right )   \\
&=&\sum_{i=1}^{r+s} (s_i+t_i) + r+s -f-g. 
\end{eqnarray*}
Thus, $\sum_{i=1}^{f+g} (\w u_i + \w t_i +1) 
\le \sum_{i=1}^{r+s} (s_i+t_i + 1)$.
Note that the coefficients in the linear combination depend only on 
$r,s$ and $t_1, \ldots, t_{r+s}$. Moreover,
if $t_1, \ldots, t_{r+s} \ge 1$, then $\w t_1, \ldots, \w t_{f+g} \ge 1$. 
Similarly, if $u_1, \ldots, u_{r+s} \ge 1$, 
then $\w u_1, \ldots, \w u_{f+g} \ge 1$.
\end{proof}

In the lemma below, we prove that the expression \eqref{finite_redu.02} 
is contained in $\BD$.

\begin{lemma}  \label{lemma_t2t1yq} 
Fix integers $r, s \ge 1$ and $u_1, \ldots, u_{r+s}, 
t_1, \ldots, t_{r+s} \ge 0$. Then,
\begin{eqnarray}  \label{lemma_t2t1yq.0}
\sum_{\substack{n_1 > \ldots > n_r \ge 0, d_1, \ldots, d_r \ge 1\\
    n_{r+1} > \ldots > n_{r+s} \ge 1, d_{r+1}, \ldots, d_{r+s} \ge 1\\
    d_1+ \ldots + d_r = d_{r+1} + \ldots + d_{r+s}}}
  \prod_{i=1}^{r+s} n_i^{u_i}d_i^{t_i} \cdot q^{\sum_{i=1}^{r+s} n_i d_i}
\quad \in \quad \BD.
\end{eqnarray}
More precisely, the above summation is a finite $\Q$-linear combination of 
bi-brackets 
$$
\begin{bmatrix} t_1', \ldots, t_\ell'\\u_1', \ldots, u_\ell' \end{bmatrix}
$$
of weights at most $\sum_{i=1}^{r+s} (u_i + t_i+1)$. In addition, 
if $t_1, \ldots, t_{r+s} \ge 1$, then $t_1' \ge 2$. 
\end{lemma}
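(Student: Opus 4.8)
The plan is to reduce the balanced double summation in \eqref{lemma_t2t1yq.0} to a finite $\Q$-linear combination of \emph{single}-chain sums, each of which is, by the defining formula \eqref{BK2-2.7}, literally a rational multiple of a bi-bracket: a sum $\sum_{n_1>\cdots>n_\ell>0,\ d_1,\dots,d_\ell\ge1}\prod_i n_i^{a_i}d_i^{b_i}q^{\sum_i n_i d_i}$ equals $\prod_i(a_i!\,b_i!)\cdot\begin{bmatrix} b_1+1,\dots,b_\ell+1\\ a_1,\dots,a_\ell\end{bmatrix}$, of weight $\sum_i(a_i+b_i+1)$ and first upper index $b_1+1$. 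Thus the whole content is to dissolve the two features that distinguish \eqref{lemma_t2t1yq.0} from such a sum: the balancing equality $d_1+\cdots+d_r=d_{r+1}+\cdots+d_{r+s}$, which couples the two chains, and the boundary convention $n_r\ge0$ (rather than $n_r>0$) on the first chain. I would argue by induction on $s$, the length of the second chain.

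The engine of the induction is the observation that, on the locus where the balancing holds, the $q$-exponent may be shifted freely: for any $c$ one has $\sum_i n_i d_i=\sum_{i\le r}(n_i-c)d_i+\sum_{i>r}(n_i+c)d_i$. Choosing $c=-n_{r+s}$, where $n_{r+s}$ is the \emph{minimum} of the second chain, kills the coefficient of $d_{r+s}$ and leaves $\sum_{i\le r}(n_i+n_{r+s})d_i+\sum_{r<i<r+s}(n_i-n_{r+s})d_i$; crucially every coefficient here is nonnegative, precisely because $n_{r+s}$ is the smallest index of its chain and $n_{r+s}\ge1$ (this is also why the strictly decreasing form supplied by Lemma~\ref{finite_redu} is needed). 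The variable $d_{r+s}$ now survives only in the factor $d_{r+s}^{t_{r+s}}$ and in the balancing, so I can eliminate it by $d_{r+s}=\sum_{i\le r}d_i-\sum_{r<i<r+s}d_i$ and expand $d_{r+s}^{t_{r+s}}$ as a polynomial; the remaining summation over $n_{r+s}$ is a sum of powers, which Lemma~\ref{sumofpowers} converts into a polynomial of controlled degree in the merged variables $m_i=n_i\pm n_{r+s}$. The base case $s=1$ is this manoeuvre performed once: there $d_{r+1}=d_1+\cdots+d_r$, merging $n_{r+1}$ into the single chain $m_i=n_i+n_{r+1}$, and Lemma~\ref{sumofpowers} collapses the $n_{r+1}$-sum into a polynomial in $m_1>\cdots>m_r\ge1$, yielding a $\Q$-combination of depth-$r$ bi-brackets directly.

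To close the induction I would expand both the polynomial coming from Lemma~\ref{sumofpowers} and the factor $d_{r+s}^{t_{r+s}}$ into monomials, after which each monomial factors across the two chains. Removing the residual constraint by inclusion–exclusion, the fully unconstrained sums become products of two single-chain sums, i.e.\ products of bi-brackets, which lie in $\BD$ because $\BD$ is a $\Q$-algebra; the constrained remainders are again balanced two-chain sums, but with the second chain shortened to length $s-1$, so the inductive hypothesis applies. The zero-valued ``slack'' variable produced by the elimination is appended to the first chain at level $n=0$, and this is exactly the configuration the hypothesis $n_r\ge0$ is designed to accommodate, so the reduced sums remain of the form treated by the lemma.

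For the two quantitative claims, the weight estimate $\sum_{i=1}^{r+s}(u_i+t_i+1)$ should fall out of the precise degree bounds in Lemma~\ref{sumofpowers}: expanding $d_{r+s}^{t_{r+s}}$ redistributes $t_{r+s}$ among the surviving $d$-exponents while the summation over $n_{r+s}$ raises the total $n$-degree in the prescribed way, so that each bi-bracket produced has weight at most the stated bound. The refinement $t_1'\ge2$ when all $t_i\ge1$ follows because the top slot (the largest $n$-value) is never one of the slack slots at $n=0$ and always inherits a $d$-exponent of at least its original $t_i\ge1$, whence its upper index is $\ge2$. The main obstacle is squarely the balancing equality, which forbids summing the chains independently; the shift-and-merge step dissolves it one variable at a time, but the delicate part will be the careful bookkeeping of the $n=0$ slack slots through the induction and the verification that the degree accounting of Lemma~\ref{sumofpowers} propagates to give both the weight bound of Lemma~\ref{lemma_t2t1yq} and the $t_1'\ge2$ conclusion without loss.
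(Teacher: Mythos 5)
Your proposal follows essentially the same route as the paper's proof: induction on $s$, the exponent shift $\sum_i n_id_i=\sum_{i\le r}(n_i+n_{r+s})d_i+\sum_{r<i<r+s}(n_i-n_{r+s})d_i$ on the balanced locus to decouple $d_{r+s}$, elimination of $d_{r+s}$ via the balancing equation, collapse of the $n_{r+s}$-sum by Lemma~\ref{sumofpowers}, and then inclusion--exclusion on the residual inequality into an unconstrained product of bi-brackets plus shorter balanced sums handled by induction, with the $n=0$ slack slot absorbed by the hypothesis $n_r\ge0$. The weight bookkeeping and the $t_1'\ge2$ refinement are also argued as in the paper, so this is the same proof in outline.
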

\begin{proof}
Use induction on $s$. 
Denote the summation in \eqref{lemma_t2t1yq.0} by $W_s$. 
For simplicity, we have suppressed the dependence of $W_s$ on 
$r, u_1, \ldots, u_{r+s}, t_1, \ldots, t_{r+s}$. 
The idea of the proof is to eliminate the parameters 
$d_{r+1}, \ldots, d_{r+s}$, via induction on $s$, from the condition 
$d_1+ \ldots + d_r = d_{r+1} + \ldots + d_{r+s}$ until it becomes void:
\begin{eqnarray}  \label{lemma_t2t1yq.100}
\w d_1+ \ldots + \w d_{\w r} = \w d_{\w r+1} + \ldots + \w d_{\w r+0}
\end{eqnarray}
($\w r$ may be different from $r$), 
which corresponds to a summation of the form $W_0$. 
During the elimination process, we may produce elements in $\BD$.

First of all, we rewrite $W_s$. Since $d_1+ \ldots + d_r 
- d_{r+1} - \ldots - d_{r+s-1} = d_{r+s} \ge 1$, 
$$
W_s = \sum_{\substack{n_1 > \ldots > n_r \ge 0, d_1, \ldots, d_r \ge 1\\
    n_{r+1} > \ldots > n_{r+s} \ge 1, d_{r+1}, \ldots, d_{r+s-1} \ge 1\\
    d_1+ \ldots + d_r > d_{r+1} + \ldots + d_{r+s-1}}}
  \prod_{i=1}^{r+s-1} n_i^{u_i}d_i^{t_i} \cdot n_{r+s}^{u_{r+s}}
  \left (\sum_{i=1}^r d_i - \sum_{i=1}^{s-1} d_{r+i} \right )^{t_{r+s}} 
$$
$$
\cdot q^{\sum_{i=1}^{r} (n_i + n_{r+s})d_i}
\cdot q^{\sum_{i=r+1}^{r+s-1} (n_i - n_{r+s})d_i}.
$$
So $W_s$ is a finite $\Q$-linear combination of expressions of the form
$$
\sum_{\substack{n_1 > \ldots > n_r \ge 0, d_1, \ldots, d_r \ge 1\\
  n_{r+1} > \ldots > n_{r+s} \ge 1, d_{r+1}, \ldots, d_{r+s-1} \ge 1\\
  d_1+ \ldots + d_r > d_{r+1} + \ldots + d_{r+s-1}}}
  \prod_{i=1}^{r+s-1} n_i^{u_i}d_i^{\w t_i} \cdot n_{r+s}^{u_{r+s}} 
  \cdot q^{\sum_{i=1}^{r} (n_i + n_{r+s})d_i}
  \cdot q^{\sum_{i=r+1}^{r+s-1} (n_i - n_{r+s})d_i}
$$
where $\w t_i \ge 0$ and $\sum_{i=1}^{r+s-1} \w t_i 
= \sum_{i=1}^{r+s} t_i$. 
After changes of variables (e.g, set $n = n_{r+s}$), 
$W_s$ is a finite $\Q$-linear combination of expressions of the form 
$$
\sum_{\substack{n_1 > \ldots > n_r \ge n \ge 1, d_1, \ldots, d_r \ge 1\\
    n_{r+1} > \ldots > n_{r+s-1} \ge 1, d_{r+1}, \ldots, d_{r+s-1} \ge 1\\
    d_1+ \ldots + d_r > d_{r+1} + \ldots + d_{r+s-1}}}
   \prod_{i=1}^{r} (n_i-n)^{u_i}d_i^{\w t_i} \cdot 
   \prod_{i=r+1}^{r+s-1} (n_i+n)^{u_i}d_i^{\w t_i} 
$$
$$
\cdot n^{u_{r+s}} \cdot q^{\sum_{i=1}^{r+s-1} n_i d_i}.
$$
which itself is a finite $\Q$-linear combination of expressions of the form
$$
\sum_{\substack{n_1 > \ldots > n_r \ge 1, d_1, \ldots, d_r \ge 1\\
    n_{r+1} > \ldots > n_{r+s-1} \ge 1, d_{r+1}, \ldots, d_{r+s-1} \ge 1\\
    d_1+ \ldots + d_r > d_{r+1} + \ldots + d_{r+s-1}}}
   \prod_{i=1}^{r+s-1} n_i^{\w u_i}d_i^{\w t_i}  
   \cdot q^{\sum_{i=1}^{r+s-1} n_i d_i} \cdot \sum_{n=1}^{n_r} n^a
$$
where $\w u_i, a \ge 0$ and $\sum_{i=1}^{r+s-1} \w u_i +a 
= \sum_{i=1}^{r+s} u_i$. By Lemma~\ref{sumofpowers}~(i), 
$W_s$ is a finite $\Q$-linear combination of expressions of the form
\begin{eqnarray}  \label{lemma_t2t1yq.1}
\sum_{\substack{n_1 > \ldots > n_r \ge 1, d_1, \ldots, d_r \ge 1\\
    n_{r+1} > \ldots > n_{r+s-1} \ge 1, d_{r+1}, \ldots, d_{r+s-1} \ge 1\\
    d_1+ \ldots + d_r > d_{r+1} + \ldots + d_{r+s-1}}}
   \prod_{i=1}^{r+s-1} n_i^{\hat u_i}d_i^{\w t_i}  
   \cdot q^{\sum_{i=1}^{r+s-1} n_i d_i}
\end{eqnarray}
where $\hat u_i, \w t_i \ge 0$, $\hat u_r \ge 1$ and 
$\sum_{i=1}^{r+s-1} \hat u_i \le 
\sum_{i=1}^{r+s-1} \w u_i +a + 1 = \sum_{i=1}^{r+s} u_i + 1$. So
\begin{eqnarray}  \label{lemma_t2t1yq.2}
\sum_{i=1}^{r+s-1} (\hat u_i + \w t_i + 1)   
\le \sum_{i=1}^{r+s} (u_i + t_i + 1).
\end{eqnarray}
Moreover,
if $t_1, \ldots, t_{r+s} \ge 1$, then $\w t_1, \ldots, \w t_{r+s-1} \ge 1$. 

When $s=1$, we see from \eqref{lemma_t2t1yq.1} and \eqref{lemma_t2t1yq.2}
that $W_1$ is a finite $\Q$-linear combination of expressions of the form
$$
\sum_{n_1 > \ldots > n_r \ge 1, d_1, \ldots, d_r \ge 1}
   \prod_{i=1}^{r} n_i^{\hat u_i}d_i^{\w t_i}  
   \cdot q^{\sum_{i=1}^{r} n_i d_i}
= \prod_{i=1}^{r} (\hat u_i! \cdot \w t_i!) \cdot 
   \begin{bmatrix} \w t_1 + 1, \ldots, \w t_r + 1\\
     \hat u_1, \ldots, \hat u_r 
   \end{bmatrix}
$$
where $\sum_{i=1}^{r} (\hat u_i + \w t_i + 1)   
\le \sum_{i=1}^{r+1} (u_i + t_i + 1)$. Moreover,
if $t_1, \ldots, t_{r+1} \ge 1$, then $\w t_1, \ldots, \w t_{r} \ge 1$. 
Hence the lemma holds for $s=1$.

Next, fix $s \ge 2$. Assume that the lemma holds for all the $W_\ell$'s 
(with various integers $r \ge 1, u_1, \ldots, u_{r+\ell}, 
t_1, \ldots, t_{r+\ell} \ge 0$) whenever $1 \le \ell \le s-1$. 
We will prove that the lemma holds for $W_s$ as well.
Rewrite \eqref{lemma_t2t1yq.1} as
\begin{eqnarray}  \label{lemma_t2t1yq.3}
\sum_{\substack{n_1 > \ldots > n_r \ge 1, d_1, \ldots, d_r \ge 1\\
    n_{r+1} > \ldots > n_{r+s-1} \ge 1, d_{r+1}, \ldots, d_{r+s-1} \ge 1}}
- \sum_{\substack{n_1 > \ldots > n_r \ge 1, d_1, \ldots, d_r \ge 1\\
    n_{r+1} > \ldots > n_{r+s-1} \ge 1, d_{r+1}, \ldots, d_{r+s-1} \ge 1\\
    d_1+ \ldots + d_r = d_{r+1} + \ldots + d_{r+s-1}}}
\end{eqnarray}
\begin{eqnarray}  \label{lemma_t2t1yq.4}
- \sum_{\substack{n_1 > \ldots > n_r \ge 1, d_1, \ldots, d_r \ge 1\\
    n_{r+1} > \ldots > n_{r+s-1} \ge 1, d_{r+1}, \ldots, d_{r+s-1} \ge 1\\
    d_1+ \ldots + d_r < d_{r+1} + \ldots + d_{r+s-1}}}.
\end{eqnarray}
The first summation in \eqref{lemma_t2t1yq.3} is equal to
$$
\prod_{i=1}^{r+s-1} (\hat u_i! \cdot \w t_i!) \cdot 
   \begin{bmatrix} \w t_1 + 1, \ldots, \w t_r + 1\\
     \hat u_1, \ldots, \hat u_r 
   \end{bmatrix}
\cdot \begin{bmatrix} \w t_{r+1} + 1, \ldots, \w t_{r+s-1} + 1\\
     \hat u_{r+1}, \ldots, \hat u_{r+s-1}
   \end{bmatrix}
\in \BD.
$$
Next, we deal with \eqref{lemma_t2t1yq.4}. 
Note that \eqref{lemma_t2t1yq.4} is equal to
$$
- \sum_{\substack{n_1 > \ldots > n_r \ge 1, d_1, \ldots, d_r, d \ge 1\\
    n_{r+1} > \ldots > n_{r+s-1} \ge 1, d_{r+1}, \ldots, d_{r+s-1} \ge 1\\
    d_1+ \ldots + d_r + d = d_{r+1} + \ldots + d_{r+s-1}}}
    \prod_{i=1}^{r+s-1} n_i^{\hat u_i}d_i^{\w t_i}  
    \cdot q^{\sum_{i=1}^{r+s-1} n_i d_i}.
$$
Since $d_1+ \ldots + d_r + d - d_{r+1} - \ldots - d_{r+s-2} 
= d_{r+s-1} \ge 1$, \eqref{lemma_t2t1yq.4} is equal to
$$
- \sum_{\substack{n_1 > \ldots > n_r \ge 1, d_1, \ldots, d_r, d \ge 1\\
    n_{r+1} > \ldots > n_{r+s-1} \ge 1, d_{r+1}, \ldots, d_{r+s-2} \ge 1\\
    d_1+ \ldots + d_r + d > d_{r+1} + \ldots + d_{r+s-2}}}
    \prod_{i=1}^{r+s-2} n_i^{\hat u_i}d_i^{\w t_i} \cdot 
    n_{r+s-1}^{\hat u_{r+s-1}} \left (\prod_{i=1}^{r} 
    d_i + d - \prod_{i=1}^{s-2} d_{r+i} \right )^{\w t_{r+s-1}}
$$
$$
\cdot q^{\sum_{i=1}^{r+s-2} n_i d_i}
\cdot q^{n_{r+s-1} (d_1+ \ldots + d_r + d - d_{r+1} - \ldots - d_{r+s-2})}
$$
which is a finite $\Q$-linear combination of expressions of the form
$$
\sum_{\substack{n_1 > \ldots > n_r > n \ge 1, d_1, \ldots, d_r, d \ge 1\\
    n_{r+1} > \ldots > n_{r+s-2} \ge 1, d_{r+1}, \ldots, d_{r+s-2} \ge 1\\
    d_1+ \ldots + d_r + d > d_{r+1} + \ldots + d_{r+s-2}}}
    \prod_{i=1}^{r} (n_i-n)^{\hat u_i}d_i^{\hat t_i} \cdot 
    \prod_{i=r+1}^{r+s-2} (n_i+n)^{\hat u_i}d_i^{\hat t_i} 
$$
$$
\cdot n^{\hat u_{r+s-1}} d^{\hat t_{r+s-1}}
\cdot q^{\sum_{i=1}^{r+s-2} n_i d_i} \cdot q^{nd}
$$
where $\hat t_i \ge 0$ ({\it it is possible that $\hat t_{r+s-1}=0$ 
even if $t_1, \ldots, t_{r+s} \ge 1$}) and 
$$
  \sum_{i=1}^{r+s-1} \hat t_i = \sum_{i=1}^{r+s-1} \w t_i 
= \sum_{i=1}^{r+s} t_i. 
$$
It follows that \eqref{lemma_t2t1yq.4} 
is a finite $\Q$-linear combination of expressions of the form
\begin{eqnarray}  \label{lemma_t2t1yq.6}
\sum_{\substack{n_1 > \ldots > n_r > n \ge 1, d_1, \ldots, d_r, d \ge 1\\
    n_{r+1} > \ldots > n_{r+s-2} \ge 1, d_{r+1}, \ldots, d_{r+s-2} \ge 1\\
    d_1+ \ldots + d_r + d > d_{r+1} + \ldots + d_{r+s-2}}}
    \prod_{i=1}^{r+s-2} n_i^{\overline u_i}d_i^{\hat t_i} \cdot 
    n^{\overline u_{r+s-1}} d^{\hat t_{r+s-1}}
    \cdot q^{\sum_{i=1}^{r+s-2} n_i d_i + nd}
\end{eqnarray} 
where $\overline u_i \ge 0$ and $\sum_{i=1}^{r+s-1} \overline u_i 
= \sum_{i=1}^{r+s-1} \hat u_i$. Note that 
$$
    \sum_{i=1}^{r+s-1} (\overline u_i + \hat t_i + 1)  
= \sum_{i=1}^{r+s-1} (\hat u_i + \w t_i + 1)    
\le \sum_{i=1}^{r+s} (u_i + t_i + 1)
$$
by \eqref{lemma_t2t1yq.2}, and that if $t_1, \ldots, t_{r+1} \ge 1$, 
then $\hat t_1, \ldots, \hat t_{r} \ge 1$. Moreover, 
\eqref{lemma_t2t1yq.6} is of the form \eqref{lemma_t2t1yq.1} 
with the index $s$ there being replaced by $s-1$,
and we repeat the above argument beginning at line \eqref{lemma_t2t1yq.3}.

Now we deal with the second summation in \eqref{lemma_t2t1yq.3} 
which is equal to
\begin{eqnarray}  \label{lemma_t2t1yq.5}
\sum_{\substack{n_1 > \ldots > n_r \ge 0, d_1, \ldots, d_r \ge 1\\
    n_{r+1} > \ldots > n_{r+s-1} \ge 1, d_{r+1}, \ldots, d_{r+s-1} \ge 1\\
    d_1+ \ldots + d_r = d_{r+1} + \ldots + d_{r+s-1}}} 
    \prod_{i=1}^{r+s-1} n_i^{\hat u_i}d_i^{\w t_i}  
    \cdot q^{\sum_{i=1}^{r+s-1} n_i d_i}
\end{eqnarray}
$$
- 0^a \cdot 
  \sum_{\substack{n_1 > \ldots > n_{r-1} \ge 1, d_1, \ldots, d_{r-1} \ge 1\\
    n_{r} > \ldots > n_{r+s-2} \ge 1, d_{r}, \ldots, d_{r+s-2} \ge 1\\
    d_1+ \ldots + d_{r-1} < d_{r} + \ldots + d_{r+s-2}}}
    \prod_{i=1}^{r+s-2} n_i^{\overline u_i} d_i^{\overline t_i}  
    \cdot q^{\sum_{i=1}^{r+s-2} n_i d_i} \cdot  
    \left (\sum_{i=0}^{s-2} d_{r+i} - \sum_{i=1}^{r-1} d_i \right )^b
$$
where $a = \hat u_r \ge 0$ (we set $0^0 =1$), $b = \w t_r$, 
we have renamed $\hat u_i$ and $\w t_i$ as 
$\overline u_i$ and $\overline t_i$ 
respectively when $1 \le i \le r-1$, and we have renamed 
$n_{r+i}, d_{r+i}, \hat u_{r+i}, \w t_{r+i}$ as $n_{r+i-1}, d_{r+i-1}, 
\overline u_{r+i-1}, \overline t_{r+i-1}$ respectively 
when $1 \le i \le s-1$. So 
$$
    \sum_{i=1}^{r+s-2} (\overline u_i + \overline t_i + 1) + b 
\le \sum_{i=1}^{r+s-1} (\hat u_i + \w t_i + 1) - 1
< \sum_{i=1}^{r+s} (u_i + t_i + 1)
$$
by \eqref{lemma_t2t1yq.2}. Since line \eqref{lemma_t2t1yq.5} is 
of the form $W_{s-1}$, we deduce from induction that 
line \eqref{lemma_t2t1yq.5} is 
a finite $\Q$-linear combination of bi-brackets of weights at most 
$$
  \sum_{i=1}^{r+s-1} (\hat u_i + \w t_i + 1)
\le \sum_{i=1}^{r+s} (u_i + t_i + 1).
$$
The line below line \eqref{lemma_t2t1yq.5} is a finite $\Q$-linear combination of the expressions 
$$
0^a \cdot 
\sum_{\substack{n_1 > \ldots > n_{r-1} \ge 1, d_1, \ldots, d_{r-1} \ge 1\\
    n_{r} > \ldots > n_{r+s-2} \ge 1, d_{r}, \ldots, d_{r+s-2} \ge 1\\
    d_1+ \ldots + d_{r-1} < d_{r} + \ldots + d_{r+s-2}}}
    \prod_{i=1}^{r+s-2} n_i^{\overline u_i} d_i^{\hat t_i}  
    \cdot q^{\sum_{i=1}^{r+s-2} n_i d_i}
$$
where $\hat t_i \ge 0$ and $\sum_{i=1}^{r+s-2} \hat t_i 
= \sum_{i=1}^{r+s-2} t_i + b$. The preceding line is either $0$ or 
is of the form \eqref{lemma_t2t1yq.4} with the index $r$ there 
being replaced by $r-1$, and so we repeat the above argument for 
\eqref{lemma_t2t1yq.4}.

In summary, we conclude that $W_s$ is a finite $\Q$-linear combination of 
bi-brackets 
$$
\begin{bmatrix} t_1', \ldots, t_\ell'\\u_1', \ldots, u_\ell' \end{bmatrix}
$$
of weights at most $\sum_{i=1}^{r+s} (u_i + t_i+1)$. In addition, 
if $t_1, \ldots, t_{r+s} \ge 1$, then $t_1' \ge 2$. 
\end{proof}

\begin{remark}   \label{rmk_rs}
In the proof of Lemma~\ref{lemma_t2t1yq}, instead of eliminating 
$d_{r+1}, \ldots, d_{r+s}$ via induction on $s$, 
we may eliminate the parameters $d_{1}, \ldots, d_{r}$, 
via induction on $r$, from the condition 
$d_1+ \ldots + d_r = d_{r+1} + \ldots + d_{r+s}$ until it becomes void:
\begin{eqnarray}  \label{rmk_rs.100}
\w d_1+ \ldots + \w d_{0} = \w d_{1} + \ldots + \w d_{\w s}
\end{eqnarray}
($\w s$ may be different from $s$).
In the following, we sketch the process. To avoid confusion, 
we denote the summation in \eqref{lemma_t2t1yq.0} by $V_r$ (so $V_r$ is 
the same as the $W_s$ in the proof of Lemma~\ref{lemma_t2t1yq}).
As in \eqref{lemma_t2t1yq.1}, 
$V_r$ is a finite $\Q$-linear combination of expressions of the form
\begin{eqnarray}  \label{rmk_rs.1}
\sum_{\substack{n_1 > \ldots > n_{r-1} \ge 1, d_1, \ldots, d_{r-1} \ge 1\\
    n_{r} > \ldots > n_{r+s-1} \ge 1, d_{r}, \ldots, d_{r+s-1} \ge 1\\
    d_1+ \ldots + d_{r-1} < d_{r} + \ldots + d_{r+s-1}}}
   \prod_{i=1}^{r+s-1} n_i^{\hat u_i}d_i^{\w t_i}  
   \cdot q^{\sum_{i=1}^{r+s-1} n_i d_i}.
\end{eqnarray}
So Lemma~\ref{lemma_t2t1yq} holds for $r = 1$. Next, fix $r \ge 2$.
Assume that the lemma holds for all the $W_\ell$'s whenever 
$1 \le \ell \le r-1$. Rewrite \eqref{rmk_rs.1} as
\begin{eqnarray}  \label{rmk_rs.2}
\sum_{\substack{n_1 > \ldots > n_{r-1} \ge 1, d_1, \ldots, d_{r-1} \ge 1\\
    n_{r} > \ldots > n_{r+s-1} \ge 1, d_{r}, \ldots, d_{r+s-1} \ge 1}}
- \sum_{\substack{n_1 > \ldots > n_{r-1} \ge 1, d_1, \ldots, d_{r-1} \ge 1\\
    n_{r} > \ldots > n_{r+s-1} \ge 1, d_{r}, \ldots, d_{r+s-1} \ge 1\\
    d_1+ \ldots + d_{r-1} = d_{r} + \ldots + d_{r+s-1}}}
\end{eqnarray}
\begin{eqnarray}  \label{rmk_rs.3}
- \sum_{\substack{n_1 > \ldots > n_{r-1} \ge 1, d_1, \ldots, d_{r-1} \ge 1\\
    n_{r} > \ldots > n_{r+s-1} \ge 1, d_{r}, \ldots, d_{r+s-1} \ge 1\\
    d_1+ \ldots + d_{r-1} > d_{r} + \ldots + d_{r+s-1}}}.
\end{eqnarray}
The first summation in \eqref{rmk_rs.2} is contained in $\BD$, 
and \eqref{rmk_rs.3} can be reduced to \eqref{rmk_rs.1} with 
the index $r$ there replaced by $(r-1)$. 
The second summation in \eqref{rmk_rs.2} can be reduced to $V_{r-1}$ 
and \eqref{rmk_rs.3} with the index $s$ there replaced by $(s-1)$. 
\end{remark}

\begin{proposition}  \label{Prop_t2t1yq} 
Fix integers $r, s \ge 1$ and $u_1, \ldots, u_{r+s}, 
t_1, \ldots, t_{r+s} \ge 0$. Then, 
\begin{eqnarray}  \label{Prop_t2t1yq.0}
\sum_{\substack{n_1, \ldots, n_r \ge 0, d_1, \ldots, d_r \ge 1\\
    n_{r+1}, \ldots, n_{r+s}, d_{r+1}, \ldots, d_{r+s} \ge 1\\
    d_1+ \ldots + d_r = d_{r+1} + \ldots + d_{r+s}}}
  \prod_{i=1}^{r+s} n_i^{u_i}d_i^{t_i} \cdot q^{\sum_{i=1}^{r+s} n_i d_i}
\quad \in \quad \BD.
\end{eqnarray}
More precisely, the above summation is a finite $\Q$-linear combination of 
bi-brackets
$$
\begin{bmatrix} t_1', \ldots, t_\ell'\\u_1', \ldots, u_\ell' \end{bmatrix}
$$
of weights at most $\sum_{i=1}^{r+s} (u_i + t_i+1)$. In addition, 
if $t_1, \ldots, t_{r+s} \ge 1$, then $t_1' \ge 2$. 
\end{proposition}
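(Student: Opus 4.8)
The plan is to obtain Proposition~\ref{Prop_t2t1yq} as the direct composition of the two preceding lemmas, with the only real work being the bookkeeping of the weight bound and of the positivity condition. First I would observe that the summation on the left-hand side of \eqref{Prop_t2t1yq.0} is \emph{exactly} of the form \eqref{finite_redu.01}: the indices satisfy $n_1, \ldots, n_r \ge 0$, $n_{r+1}, \ldots, n_{r+s} \ge 1$, $d_1, \ldots, d_{r+s} \ge 1$, subject to the single constraint $d_1 + \ldots + d_r = d_{r+1} + \ldots + d_{r+s}$ and with no order relation imposed among the $n_i$. Hence the first step is simply to apply Lemma~\ref{finite_redu}, which rewrites this summation as a finite $\Q$-linear combination of the ``ordered'' summations \eqref{finite_redu.02}, each carrying block sizes $f, g \ge 1$, exponents $\w u_i, \w t_i \ge 0$, and the weight bound
$$
\sum_{i=1}^{f+g} (\w u_i + \w t_i + 1) \le \sum_{i=1}^{r+s} (u_i + t_i + 1).
$$

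The second step recognizes that each ordered summand \eqref{finite_redu.02} produced above is precisely an instance of \eqref{lemma_t2t1yq.0}, with $(r,s)$ there replaced by $(f,g)$ and $(u_i, t_i)$ replaced by $(\w u_i, \w t_i)$: the strict inequalities $\w n_1 > \ldots > \w n_f \ge 0$ and $\w n_{f+1} > \ldots > \w n_{f+g} \ge 1$ match the hypotheses of Lemma~\ref{lemma_t2t1yq} verbatim, and $f, g \ge 1$ meets the requirement that both index blocks be nonempty. Invoking Lemma~\ref{lemma_t2t1yq} expresses each ordered summand as a finite $\Q$-linear combination of bi-brackets of weight at most $\sum_{i=1}^{f+g}(\w u_i + \w t_i + 1)$; chaining this with the bound from the first step gives bi-brackets of weight at most $\sum_{i=1}^{r+s}(u_i + t_i + 1)$, and assembling over the finitely many summands shows that the left-hand side of \eqref{Prop_t2t1yq.0} lies in $\BD$ with the asserted weight bound. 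For the refinement $t_1' \ge 2$ under the hypothesis $t_1, \ldots, t_{r+s} \ge 1$, I would propagate the positivity of the $d$-exponents through both reductions: Lemma~\ref{finite_redu} guarantees that $t_1, \ldots, t_{r+s} \ge 1$ forces $\w t_1, \ldots, \w t_{f+g} \ge 1$ in every ordered summand, and Lemma~\ref{lemma_t2t1yq} guarantees that all $d$-exponents being $\ge 1$ forces the leading entry $t_1' \ge 2$ in every resulting bi-bracket. Composing these two implications yields the claim.

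Since the genuine analytic and combinatorial content — symmetrizing the index set into strictly ordered blocks, and converting an ordered sum with a single balancing constraint into a $\Q$-combination of bi-brackets — has already been discharged in Lemmas~\ref{finite_redu} and \ref{lemma_t2t1yq}, there is no real obstacle here. The only point demanding care is verifying that the two estimates compose to the same target $\sum_{i=1}^{r+s}(u_i + t_i + 1)$; this is transparent because the upper bound produced by Lemma~\ref{finite_redu} is precisely the input bound consumed by Lemma~\ref{lemma_t2t1yq}, so the inequalities telescope without loss, and an analogous one-line check confirms the propagation of the positivity condition on the $d$-exponents.
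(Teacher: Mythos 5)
Your proposal is correct and matches the paper's proof, which is exactly the two-step composition of Lemma~\ref{finite_redu} followed by Lemma~\ref{lemma_t2t1yq}, with the weight bounds telescoping and the condition $t_i \ge 1 \Rightarrow \w t_i \ge 1 \Rightarrow t_1' \ge 2$ propagated through both lemmas as you describe. The paper states this derivation in one line; your write-up simply makes the bookkeeping explicit.
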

\begin{proof}
Follows immediately from Lemma~\ref{finite_redu} and 
Lemma~\ref{lemma_t2t1yq}.
\end{proof}

Now we are ready to prove the basic case $\mathfrak P_{2}^{0,0}$ in 
Theorem~\ref{Intro-yij1toNm>0} .

\begin{theorem}  \label{t2t1yq} 
For $j \in \{1, 2\}$, let $x_j = e^{z_j}$ and $u_j = e^{v_j}$. Then,
\begin{eqnarray}    \label{t2t1yq.0}
\Coe_{y=0} \left (\frac{(x_1x_2y)_\infty (y)_\infty}
     {(x_1y)_\infty (x_2y)_\infty} 
\frac{(u_1u_2y^{-1}q)_\infty (y^{-1}q)_\infty}
     {(u_1y^{-1}q)_\infty (u_2y^{-1}q)_\infty} \right )
\in \qBD[[z_1,z_2,v_1,v_2]].
\end{eqnarray}
More precisely, the coefficient of $z_1^m z_2^n v_1^{\w m} v_2^{\w n}$ 
in the above expression is a finite $\Q$-linear combination of 
bi-brackets in $\qBD$ of weights at most $(m + n + \w m + \w n)$.
\end{theorem}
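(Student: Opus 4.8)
The plan is to take the logarithm of $\mathfrak P_{2}^{0,0}$, collapse the result into two clean sums, and then recognize the relevant coefficient extraction as an instance of Proposition~\ref{Prop_t2t1yq}. Using $\ln(a)_\infty = -\sum_{d \ge 1} a^d/\big(d(1-q^d)\big)$ and $\ln(aq)_\infty = -\sum_{d \ge 1} a^d q^d/\big(d(1-q^d)\big)$, together with the elementary identity $(x_1x_2)^d + 1 - x_1^d - x_2^d = (x_1^d-1)(x_2^d-1)$ (and its analogue for the $u_j$), I would first compute
\begin{eqnarray*}
\ln \mathfrak P_{2}^{0,0}
&=& -\sum_{d \ge 1} \frac{y^d}{d(1-q^d)}(x_1^d-1)(x_2^d-1) \\
& & -\sum_{d \ge 1} \frac{q^d y^{-d}}{d(1-q^d)}(u_1^d-1)(u_2^d-1)
\;=:\; L_+ + L_-,
\end{eqnarray*}
where $L_+$ carries only positive powers of $y$ and $L_-$ only negative ones. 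Since $(x_j^d-1)=\sum_{a\ge1}d^a z_j^a/a!$ and $(u_j^d-1)=\sum_{a\ge1}d^a v_j^a/a!$ involve only positive powers of the respective variables, both $L_\pm$ vanish at $z_1=z_2=v_1=v_2=0$; in particular the constant term is $1$.

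Working in a commutative ring, $\mathfrak P_{2}^{0,0} = e^{L_+}e^{L_-}$, and I would expand each exponential as $\sum_{r\ge0} L_\pm^r/r!$. Substituting $\tfrac{1}{1-q^d}=\sum_{n\ge0}q^{nd}$ in $L_+$ and $\tfrac{q^d}{1-q^d}=\sum_{n\ge1}q^{nd}$ in $L_-$, the term $L_+^r$ becomes a sum over free tuples $(n_1,d_1),\ldots,(n_r,d_r)$ with $n_i\ge0,\ d_i\ge1$ and total $y$-degree $d_1+\cdots+d_r$, while $L_-^s$ runs over tuples with $n_i\ge1$ and $y$-degree $-(d_{r+1}+\cdots+d_{r+s})$. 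Hence extracting $\Coe_{y=0}$ couples the two exponentials exactly through the balance condition $d_1+\cdots+d_r = d_{r+1}+\cdots+d_{r+s}$. Extracting next the coefficient of $z_1^m z_2^n v_1^{\w m} v_2^{\w n}$ distributes $m=\sum_{i\le r}a_i$, $n=\sum_{i\le r}b_i$ (each $a_i,b_i\ge1$) and $\w m=\sum_j \w a_j$, $\w n=\sum_j \w b_j$ (each $\ge1$) over the factors, replacing factor $i$ of $L_+$ by an explicit rational multiple of $d_i^{a_i+b_i-1}q^{n_id_i}$ and likewise for $L_-$.

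Because each $a_i,b_i\ge1$, the resulting exponent of $d_i$ is $t_i:=a_i+b_i-1\ge1$, and no polynomial factor in $n_i$ appears, so $u_i=0$ throughout. Moreover $r\le\min(m,n)$ and $s\le\min(\w m,\w n)$, so only finitely many $(r,s)$ and finitely many distributions contribute; the balance condition forces $r=0$ iff $s=0$, and that case yields only the constant $1$. Every remaining contribution is precisely a summation of the form in Proposition~\ref{Prop_t2t1yq} with $r,s\ge1$, all $t_i\ge1$ and all $u_i=0$. The Proposition then expresses each such summation as a finite $\Q$-linear combination of bi-brackets with $t_1'\ge2$, that is, bi-brackets in $\qBD$, of weight at most $\sum_{i=1}^{r+s}(u_i+t_i+1)=\sum_{i\le r}(a_i+b_i)+\sum_{j}(\w a_j+\w b_j)=(m+n)+(\w m+\w n)$. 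Summing the finitely many contributions and invoking that $\qBD$ is closed under $\Q$-linear combinations gives the claim.

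The step I expect to be the main obstacle is the bookkeeping of the combined $\Coe_{y=0}$- and $(z,v)$-coefficient extraction: showing rigorously that it produces exactly a finite $\Q$-combination of Proposition~\ref{Prop_t2t1yq} summations, and in particular that every $d_i$-exponent satisfies $t_i\ge1$, which is precisely what upgrades the conclusion from $\BD$ to $\qBD$. The algebraic simplification of $\ln\mathfrak P_{2}^{0,0}$ and the weight count are then routine, and the analytic content is entirely deferred to the already-established Proposition.
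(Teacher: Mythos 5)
Your proposal is correct and follows essentially the same route as the paper: both arguments reduce the claim to Proposition~\ref{Prop_t2t1yq} by showing that the coefficient of $z_1^m z_2^n v_1^{\w m} v_2^{\w n}$ in $\Coe_{y=0}(\mathfrak P_2^{0,0})$ is a finite $\Q$-linear combination of the balanced sums \eqref{t2t1yq.6} with every $d_i$-exponent at least $1$ and no $n_i$-factors, whence $t_1' \ge 2$ and membership in $\qBD$ with the stated weight bound. The only difference is organizational: you expand $e^{L_+ + L_-}$ directly via the identity $(x_1x_2)^d + 1 - x_1^d - x_2^d = (x_1^d-1)(x_2^d-1)$, which makes the bound $t_i = a_i + b_i - 1 \ge 1$ immediate, whereas the paper runs the same computation through the Bloch--Okounkov formula \eqref{BO6.9} applied to $f=\ln(\cdots)$ in $z_1$, carrying the $z_2$-dependence in $D_{x_1}^t f|_{x_1=1}$ and arriving at the identical exponents $t_i + j_i - 1 \ge 1$.
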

\begin{proof}
Regard $(x_1x_2y)_\infty (y)_\infty (x_1y)_\infty^{-1} (x_2y)_\infty^{-1}$ 
as an element in $\Q[[z_1,z_2,q,y]]$. Put
$$
f(z_1) = \ln\big ((x_1x_2y)_\infty (y)_\infty (x_1y)_\infty^{-1} 
(x_2y)_\infty^{-1} \big ).
$$
Then, $f(0) = 0$. For $m \ge 0$, let 
$$
a_{m} = \Coe_{z_1^m} 
\big ((x_1x_2y)_\infty (y)_\infty (x_1y)_\infty^{-1} (x_2y)_\infty^{-1} 
\big ) \in \C[[z_2,q,y]].
$$
By \eqref{AAA06250955am.1}, we have
\begin{eqnarray}  \label{t2t1yq.1}
  a_{m}
= \sum_{k_1+2k_2+3k_3+\cdots = m}
  \frac{1}{k_1! k_2! k_3! \cdots} \prod_{t \ge 1}
  \left (\frac{D_{x_1}^t f}{t!} \Big |_{x_1=1}\right )^{k_t}. 
\end{eqnarray} 
Since $f(z) = \ln\big ((y)_\infty (x_2y)_\infty^{-1} \big ) 
+ \sum_{n \ge 0} \ln (1-x_1x_2yq^n) - \sum_{n \ge 0} \ln (1-x_1yq^n)$,
\begin{eqnarray*} 
   D_{x_1}f  
&=&-\sum_{n \ge 0} \frac{x_1x_2yq^n}{1-x_1x_2yq^n} 
+ \sum_{n \ge 0} \frac{x_1yq^n}{1-x_1yq^n} \\
&=&-\sum_{n \ge 0} \sum_{d \ge 1} (x_1x_2yq^n)^d 
   + \sum_{n \ge 0} \sum_{d \ge 1} (x_1yq^n)^d.
\end{eqnarray*} 
It follows that for $t \ge 1$, $D_{x_1}^tf|_{x_1=1}$ is equal to
\begin{eqnarray*} 
& &-\sum_{n \ge 0} \sum_{d \ge 1} d^{t-1} (x_2yq^n)^d 
   + \sum_{n \ge 0} \sum_{d \ge 1} d^{t-1} (yq^n)^{d} \\
&=&-\sum_{n \ge 0} \sum_{d \ge 1} d^{t-1}y^dq^{nd} \sum_{j \ge 0} 
   \frac{(dz_2)^j}{j!} 
   + \sum_{n \ge 0} \sum_{d \ge 1} d^{t-1} y^dq^{nd}  \\
&=&-\sum_{j \ge 1} \frac{z_2^j}{j!}
   \sum_{n \ge 0} \sum_{d \ge 1} d^{t+j-1} y^dq^{nd} \\
&\in&\Q[[z_2, q,y]] \cdot z_2
\end{eqnarray*} 
By \eqref{t2t1yq.1}, $a_{m} \in \Q[[z_2, q,y]] \cdot z_2$ if $m \ge 1$. 
Moreover, the coefficient of $z_2^n$ in $a_m$ is 
a finite $\Q$-linear combination of expressions of the form:
\begin{eqnarray}  \label{t2t1yq.2}
\prod_{i=1}^r \left (\sum_{n_i \ge 0} \sum_{d_i \ge 1} 
d_i^{t_i+j_i-1} y^{d_i}q^{n_i d_i} \right )
\end{eqnarray}
where $t_i$ and $j_i$ are fixed positive integers satisfying 
$\sum_{i=1}^r t_i = m$ and $\sum_{i=1}^r j_i = n$.
Since $a_0 = 1$, we conclude that 
\begin{eqnarray}  \label{t2t1yq.3}
  \frac{(x_1x_2y)_\infty (y)_\infty}{(x_1y)_\infty (x_2y)_\infty}
= 1 + \sum_{m \ge 1} a_m z_1^m
\in 1 + \Q[[z_1, z_2, q, y]] \cdot z_1z_2. 
\end{eqnarray}
In addition, for $m,n \ge 1$, the coefficient of $z_1^m z_2^n$ is 
a finite $\Q$-linear combination of expressions of 
the form \eqref{t2t1yq.2}.

Similarly, 
\begin{eqnarray}  \label{t2t1yq.4}
\frac{(u_1u_2y^{-1}q)_\infty (y^{-1}q)_\infty}
     {(u_1y^{-1}q)_\infty (u_2y^{-1}q)_\infty}
\in 1 + \Q[[v_1, v_2, q, y^{-1}]] \cdot v_1v_2,
\end{eqnarray}
and for $\w m, \w n \ge 1$, the coefficient of $v_1^{\w m} v_2^{\w n}$ is 
a finite $\Q$-linear combination of expressions of the form:
\begin{eqnarray}  \label{t2t1yq.5}
\prod_{i=1}^s \left (\sum_{n_i \ge 1} \sum_{d_i \ge 1} 
d_i^{t_i+j_i-1} (y^{-1})^{d_i}q^{n_i d_i} \right )
\end{eqnarray}
where $t_i$ and $j_i$ are fixed positive integers with
$\sum_{i=1}^s t_i = \w m$ and $\sum_{i=1}^s j_i = \w n$. Put 
$$
  C^{m,n}_{\w m, \w n} 
= \Coe_{z_1^m z_2^n v_1^{\w m} v_2^{\w n}}
  \left (\frac{(x_1x_2y)_\infty (y)_\infty}
     {(x_1y)_\infty (x_2y)_\infty} 
  \frac{(u_1u_2y^{-1}q)_\infty (y^{-1}q)_\infty}
     {(u_1y^{-1}q)_\infty (u_2y^{-1}q)_\infty} \right )
\in \Q[[q, y, y^{-1}]].
$$
By \eqref{t2t1yq.2}-\eqref{t2t1yq.5}, $C^{m,n}_{\w m, \w n}$ is 
a finite $\Q$-linear combination of expressions of the form:
\begin{eqnarray}  \label{t2t1yq.50}
& &\prod_{i=1}^r \left (\sum_{n_i \ge 0} \sum_{d_i \ge 1} 
    d_i^{t_i+j_i-1} y^{d_i}q^{n_i d_i} \right )
  \cdot \prod_{i=1}^{s} \left (\sum_{n_i \ge 1} \sum_{d_i \ge 1} 
    d_i^{t_i+j_i-1} (y^{-1})^{d_i}q^{n_i d_i} \right ) 
    \nonumber   \\
&=&\sum_{\substack{n_1, \ldots, n_r \ge 0, d_1, \ldots, d_r \ge 1\\
    n_{r+1}, \ldots, n_{r+s} \ge 1, d_{r+1}, \ldots, d_{r+s} \ge 1}}
  \prod_{i=1}^{r+s} d_i^{t_i+j_i-1} 
  \cdot y^{\sum_{i=1}^r d_i - \sum_{i=1}^{s} d_{r+i}}
  q^{\sum_{i=1}^{r+s} n_i d_i}
\end{eqnarray}
where $t_i, j_i$ are fixed positive integers satisfying 
$$
\sum_{i=1}^r t_i = m, \sum_{i=1}^r j_i = n,
\sum_{i=1}^{s} t_{r+i} = \w m, \sum_{i=1}^{s} j_{r+i} = \w n. 
$$
Thus, $\Coe_{y=0} C^{m,n}_{\w m, \w n}$ is 
a finite $\Q$-linear combination of expressions of the form:
\begin{eqnarray}  \label{t2t1yq.6}
\sum_{\substack{n_1, \ldots, n_r \ge 0, d_1, \ldots, d_r \ge 1\\
    n_{r+1}, \ldots, n_{r+s} \ge 1, d_{r+1}, \ldots, d_{r+s} \ge 1\\
    d_1 + \ldots + d_r = d_{r+1} + \ldots + d_{r+s}}}
  \prod_{i=1}^{r+s} d_i^{t_i+j_i-1} 
  \cdot q^{\sum_{i=1}^{r+s} n_i d_i}.
\end{eqnarray}
By Proposition~\ref{Prop_t2t1yq}, $\Coe_{y=0} C^{m,n}_{\w m, \w n}$ 
is a finite $\Q$-linear combination of 
bi-brackets in $\qBD$ of weights at most $(m + n + \w m + \w n)$. 
This proves the theorem.
\end{proof}

\section{\bf Proof of Theorem~\ref{Intro-yij1toNm>0}} 
\label{sect_mpositive}

In this section, we will prove Theorem~\ref{Intro-yij1toNm>0} 
(= Theorem~\ref{yij1toNm>0}) regarding the trace $\mathfrak P_{N}^{a,b}$ 
(see Definition~\ref{def-zmvn}), after having settled the basic case 
$\mathfrak P_{2}^{0,0}$ in Theorem~\ref{t2t1yq} in the previous section. 
Our four results here (Lemma~\ref{finite_redu1toNm>0}, 
Lemma~\ref{lemma_t2t1yq1toNm>0}, Proposition~\ref{Prop_t2t1yq1toNm>0} 
and Theorem~\ref{yij1toNm>0})
correspond to Lemma~\ref{finite_redu}, Lemma~\ref{lemma_t2t1yq}, 
Proposition~\ref{Prop_t2t1yq} and Theorem~\ref{t2t1yq} respectively. 

We start with a lemma which is a direct generalization of 
Lemma~\ref{finite_redu}.

\begin{lemma}  \label{finite_redu1toNm>0} 
Let $N \ge 2$. For $1 \le i < j \le N$, fix $r^{i,j}, s^{i,j} \ge 0$. 
For $1 \le i < j \le N$ and $1 \le k \le r^{i,j} + s^{i,j}$, 
fix $u_k^{i,j}, t_k^{i,j} \ge 0$. 
For $1 \le i \le N$, fix $r_i, s_i \ge 0$. For $1 \le i \le N$
and $1 \le k \le r_i + s_i$, fix $u_k^{(i)}, t_k^{(i)} \ge 0$. Then, 
$$
\prod_{1 \le i < j \le N}
\sum_{\substack{n_1^{i,j}, \ldots, n_{r^{i,j}}^{i,j} \ge 0, 
    d_1^{i,j}, \ldots, d_{r^{i,j}}^{i,j} \ge 1\\
    n_{r^{i,j}+1}^{i,j}, \ldots, n_{r^{i,j}+s^{i,j}}^{i,j} \ge 1, 
    d_{r^{i,j}+1}^{i,j}, \ldots, d_{r^{i,j}+s^{i,j}}^{i,j} \ge 1}}
  \prod_{k=1}^{r^{i,j}+s^{i,j}} (n_k^{i,j})^{u_k^{i,j}}
    (d_k^{i,j})^{t_k^{i,j}} 
\cdot q^{\sum_{k=1}^{r^{i,j}+s^{i,j}} n_k^{i,j} d_k^{i,j}}
$$
\begin{eqnarray}    \label{finite_redu1toNm>0.0}
\cdot \prod_{1 \le i \le N}
\sum_{\substack{n_1^{(i)}, \ldots, n_{r_i}^{(i)} \ge 0, 
    d_1^{(i)}, \ldots, d_{r_i}^{(i)} \ge 1\\
    n_{r_i+1}^{(i)}, \ldots, n_{r_i+s_i}^{(i)} \ge 1,
    d_{r_i+1}^{(i)}, \ldots, d_{r_i+s_i}^{(i)} \ge 1}}
  \prod_{k=1}^{r_i+s_i} (n_k^{(i)})^{u_k^{(i)}} (d_k^{(i)})^{t_k^{(i)}}
\cdot q^{\sum_{k=1}^{r_i+s_i} n_k^{(i)} d_k^{(i)}}
\end{eqnarray}    
together with the condition that for every $1 \le i \le N$,
\begin{eqnarray*}
& &\sum_{1 \le j < i} \bigg (\sum_{k=1}^{r^{j,i}} d_k^{j,i} 
    - \sum_{k=1}^{s^{j,i}} d_{r^{j,i}+k}^{j,i} \bigg )
  + \bigg (\sum_{k=1}^{r_i} d_k^{(i)} 
    - \sum_{k=1}^{s_i} d_{r_i+k}^{(i)} \bigg )    \\
&=&\sum_{N \ge j > i} \bigg (\sum_{k=1}^{r^{i,j}} d_k^{i,j} 
    - \sum_{k=1}^{s^{i,j}} d_{r^{i,j}+k}^{i,j} \bigg ),
\end{eqnarray*}
is a finite $\Q$-linear combination of expressions of the form
$$
\prod_{1 \le i < j \le N}
\sum_{\substack{\w n_1^{i,j} > \ldots > \w n_{f^{i,j}}^{i,j} \ge 0, 
    \w d_1^{i,j}, \ldots, \w d_{f^{i,j}}^{i,j} \ge 1\\
    \w n_{f^{i,j}+1}^{i,j} > \ldots > \w n_{f^{i,j}+g^{i,j}}^{i,j} \ge 1, 
    \w d_{f^{i,j}+1}^{i,j}, \ldots, \w d_{f^{i,j}+g^{i,j}}^{i,j} \ge 1}}
\prod_{k=1}^{f^{i,j}+g^{i,j}} (\w n_k^{i,j})^{\w u_k^{i,j}}
    (\w d_k^{i,j})^{\w t_k^{i,j}} 
\cdot q^{\sum_{k=1}^{f^{i,j}+g^{i,j}} \w n_k^{i,j} \w d_k^{i,j}}
$$
\begin{eqnarray}    \label{finite_redu1toNm>0.01}
\cdot \prod_{1 \le i \le N}
\sum_{\substack{\w n_1^{(i)} > \ldots > \w n_{f_i}^{(i)} \ge 0, 
    \w d_1^{(i)}, \ldots, \w d_{f_i}^{(i)} \ge 1\\
    \w n_{f_i+1}^{(i)} > \ldots > \w n_{f_i+g_i}^{(i)} \ge 1,
    \w d_{f_i+1}^{(i)}, \ldots, \w d_{f_i+g_i}^{(i)} \ge 1}}
  \prod_{k=1}^{f_i+g_i} (\w n_k^{(i)})^{\w u_k^{(i)}} 
    (\w d_k^{(i)})^{\w t_k^{(i)}}
\cdot q^{\sum_{k=1}^{f_i+g_i} \w n_k^{(i)} \w d_k^{(i)}}
\end{eqnarray}
together with the condition that for every $1 \le i \le N$,
\begin{eqnarray*}
& &\sum_{1 \le j < i} \bigg (\sum_{k=1}^{f^{j,i}} \w d_k^{j,i} 
    - \sum_{k=1}^{g^{j,i}} \w d_{f^{j,i}+k}^{j,i} \bigg )   
   + \bigg (\sum_{k=1}^{f_i} \w d_k^{(i)} 
    - \sum_{k=1}^{g_i} \w d_{f_i+k}^{(i)} \bigg )   \\
&=&\sum_{N \ge j > i} \bigg (\sum_{k=1}^{f^{i,j}} \w d_k^{i,j} 
    - \sum_{k=1}^{g^{i,j}} \w d_{f^{i,j}+k}^{i,j} \bigg )
\end{eqnarray*}
where $\displaystyle{\sum_{1 \le i < j \le N} \sum_{k=1}^{f^{i,j}+g^{i,j}} 
(\w u_k^{i,j} + \w t_k^{i,j} + 1) 
+ \sum_{1 \le i \le N} \sum_{k=1}^{f_i+g_i} 
(\w u_k^{(i)} + \w t_k^{(i)} +1)}$ is at most
$$
\sum_{1 \le i < j \le N} \sum_{k=1}^{r^{i,j}+s^{i,j}} 
(u_k^{i,j} + t_k^{i,j} + 1) 
+ \sum_{1 \le i \le N} \sum_{k=1}^{r_i+s_i} (u_k^{(i)} +t_k^{(i)} +1).
$$ 
Moreover, if $t_k^{i,j}, t_k^{(i')} \ge 1$ for all $k$, $i < j$ and $i'$, 
then $\w t_k^{i,j}, \w t_k^{(i')} \ge 1$ for all $k$, $i < j$ and $i'$. 
If $u_k^{i,j}, u_k^{(i')} \ge 1$ for all $k$, $i < j$ and $i'$, 
then $\w u_k^{i,j}, \w u_k^{(i')} \ge 1$ for all $k$, $i < j$ and $i'$.
\end{lemma}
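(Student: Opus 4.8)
The plan is to reduce everything to the single-factor statement of Lemma~\ref{finite_redu} by running its block-decomposition argument independently on each of the $\binom{N}{2}$ pairwise factors and each of the $N$ singleton factors, and then to check that this decomposition is compatible with the $N$ balance conditions on the $d$-variables. Fix one factor, say the one indexed by a pair $(i,j)$. Its $n$-indices split into two unordered groups: the first $r^{i,j}$ with $n_k^{i,j} \ge 0$ and the next $s^{i,j}$ with $n_k^{i,j} \ge 1$. Exactly as in the proof of Lemma~\ref{finite_redu}, I would decompose the unordered sum over each group into a finite sum over all ways of partitioning its indices into blocks sharing a common value, with the distinct shared values made strictly decreasing. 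This produces the strict orderings $\w n_1^{i,j} > \cdots > \w n_{f^{i,j}}^{i,j} \ge 0$ and $\w n_{f^{i,j}+1}^{i,j} > \cdots \ge 1$ of \eqref{finite_redu1toNm>0.01}, collapses the exponents $u_k^{i,j}$ within each block into a single $\w u$ equal to their sum, and forces the $d$-variables of that block to share the common new node.

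For each block, the summation over its $d$-variables with fixed total $\w d$ against the product $\prod (d_k^{i,j})^{t_k^{i,j}}$ is precisely the power sum controlled by Lemma~\ref{sumofpowers}~(ii), hence a polynomial in $\Q[\w d]\cdot \w d$, i.e. a $\Q$-linear combination of monomials $(\w d)^{\w t}$ with $\w t \ge 1$. Carrying this out for every block of every group of every factor and multiplying out writes the left side of \eqref{finite_redu1toNm>0.0} as a finite $\Q$-linear combination of expressions of the form \eqref{finite_redu1toNm>0.01}. The weight bookkeeping is the same as in Lemma~\ref{finite_redu}, applied factor by factor: a block whose $n$-exponents sum to $U$ contributes $\w u = U$, while the $d$-power sum over that block, with $d$-exponents summing to $T$ over $c$ indices, produces $\w t \le T + c - 1$ by Lemma~\ref{sumofpowers}~(ii); summing $(\w u + \w t + 1)$ over the blocks of a group gives at most $\sum (u_k + t_k + 1)$ over that group, since the block sizes add up to the group size. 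Summing over both groups of every factor yields the stated global inequality. The preservation claims follow as before: a power sum of positive-exponent monomials has positive degree, so $t_k \ge 1$ forces $\w t \ge 1$, and a sum of positive $u_k$'s is positive, so $u_k \ge 1$ forces $\w u \ge 1$.

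The genuinely new point, and the step I expect to require the most care, is verifying that the $N$ balance conditions survive the block collapse in the form displayed in \eqref{finite_redu1toNm>0.01}. Each balance condition is $\Z$-linear in the $d$-variables with coefficients in $\{+1,-1\}$, and for a fixed factor the coefficient attached to a given $d_k^{i,j}$ depends only on which of the two groups the index $k$ lies in. Concretely, in the factor $(i,j)$ with $i<j$ the first group carries coefficient $+1$ and the second group coefficient $-1$ in \emph{both} the condition for index $i$ (where the factor occurs as a $j>i$ term) and the condition for index $j$ (where it occurs as a $j'<j$ term with $j'=i$); likewise a singleton factor $(i)$ occurs only in the condition for index $i$, again with $+1$ on its first group and $-1$ on its second. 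Thus the coefficient of each $d$-variable is constant on every grouping block, so replacing a block's $d$-variables by their sum $\w d$ turns $\sum_{k=1}^{r^{i,j}} d_k^{i,j}$ into $\sum_{k=1}^{f^{i,j}} \w d_k^{i,j}$ and leaves each balance condition intact in the $\w d$-variables.

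Once this compatibility is in place, the block decomposition and the substitution from Lemma~\ref{sumofpowers}~(ii) commute with the balance constraints, so assembling the factors and collecting terms gives the desired finite $\Q$-linear combination of the form \eqref{finite_redu1toNm>0.01} together with the transformed balance conditions, with the weight bound and the two positivity statements as above. The only real obstacle is the indexing bookkeeping needed to confirm the block-constancy of the coefficients across all $N$ conditions simultaneously; after that the proof is a direct, multi-factor reprise of Lemma~\ref{finite_redu}.
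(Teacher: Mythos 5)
Your proposal is correct and follows the same route as the paper, whose proof of this lemma is simply to apply the argument of Lemma~\ref{finite_redu} factor by factor; the extra detail you supply --- that each balance condition has coefficient $+1$ or $-1$ constant on every grouping block, so collapsing a block's $d$-variables to their sum $\w d$ preserves all $N$ conditions --- is exactly the compatibility check the paper leaves implicit. No further changes are needed.
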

\begin{proof}
Follows by applying the proof of Lemma~\ref{finite_redu} to 
each factor in the products $\prod_{1 \le i < j \le N}$ and 
$\prod_{1 \le i \le N}$ in \eqref{finite_redu1toNm>0.0}.
\end{proof}

Next, we prove that the expression \eqref{finite_redu1toNm>0.01} 
is contained in $\BD$.

\begin{lemma}  \label{lemma_t2t1yq1toNm>0} 
Let $N \ge 2$. For $1 \le i < j \le N$, fix $r^{i,j}, s^{i,j} \ge 0$. 
For $1 \le i < j \le N$ and $1 \le k \le r^{i,j} + s^{i,j}$, 
fix $u_k^{i,j}, t_k^{i,j} \ge 0$. 
For $1 \le i \le N$, fix $r_i, s_i \ge 0$. For $1 \le i \le N$
and $1 \le k \le r_i + s_i$, fix $u_k^{(i)}, t_k^{(i)} \ge 0$. 
Let $V^{(N)}$ denote
$$
\prod_{1 \le i < j \le N}
\sum_{\substack{n_1^{i,j} > \ldots > n_{r^{i,j}}^{i,j} \ge 0, 
    d_1^{i,j}, \ldots, d_{r^{i,j}}^{i,j} \ge 1\\
    n_{r^{i,j}+1}^{i,j} > \ldots > n_{r^{i,j}+s^{i,j}}^{i,j} \ge 1, 
    d_{r^{i,j}+1}^{i,j}, \ldots, d_{r^{i,j}+s^{i,j}}^{i,j} \ge 1}}
  \prod_{k=1}^{r^{i,j}+s^{i,j}} (n_k^{i,j})^{u_k^{i,j}}
    (d_k^{i,j})^{t_k^{i,j}} 
\cdot q^{\sum_{k=1}^{r^{i,j}+s^{i,j}} n_k^{i,j} d_k^{i,j}}
$$
\begin{eqnarray}    \label{lemma_t2t1yq1toNm>0.01}
\cdot \prod_{1 \le i \le N}
\sum_{\substack{n_1^{(i)} > \ldots > n_{r_i}^{(i)} \ge 0, 
    d_1^{(i)}, \ldots, d_{r_i}^{(i)} \ge 1\\
    n_{r_i+1}^{(i)} > \ldots > n_{r_i+s_i}^{(i)} \ge 1,
    d_{r_i+1}^{(i)}, \ldots, d_{r_i+s_i}^{(i)} \ge 1}}
  \prod_{k=1}^{r_i+s_i} (n_k^{(i)})^{u_k^{(i)}} (d_k^{(i)})^{t_k^{(i)}}
\cdot q^{\sum_{k=1}^{r_i+s_i} n_k^{(i)} d_k^{(i)}}
\end{eqnarray}    
together with the condition that for every $1 \le i \le N$,
\begin{eqnarray}    \label{lemma_t2t1yq1toNm>0.02}
& &\sum_{1 \le j < i} \bigg (\sum_{k=1}^{r^{j,i}} d_k^{j,i} 
    - \sum_{k=1}^{s^{j,i}} d_{r^{j,i}+k}^{j,i} \bigg )
  + \bigg (\sum_{k=1}^{r_i} d_k^{(i)} 
    - \sum_{k=1}^{s_i} d_{r_i+k}^{(i)} \bigg )  \nonumber  \\
&=&\sum_{N \ge j > i} \bigg (\sum_{k=1}^{r^{i,j}} d_k^{i,j} 
    - \sum_{k=1}^{s^{i,j}} d_{r^{i,j}+k}^{i,j} \bigg ).
\end{eqnarray}
Then, $V^{(N)} \in \BD$ with weight at most   
$$
\sum_{1 \le i < j \le N} \sum_{k=1}^{r^{i,j}+s^{i,j}} 
  (u_k^{i,j} + t_k^{i,j} + 1)
+ \sum_{1 \le i \le N} \sum_{k=1}^{r_i+s_i} (u_k^{(i)} +t_k^{(i)} +1).
$$
Moreover, if $t_k^{i,j}, t_k^{(i')} \ge 1$ for all $k$, $i < j$ and $i'$, 
then $V^{(N)} \in \qBD$. 
\end{lemma}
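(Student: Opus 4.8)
The plan is to prove Lemma~\ref{lemma_t2t1yq1toNm>0} by induction on $N$, using the single-constraint elimination of Lemma~\ref{lemma_t2t1yq} as the engine. The first thing I would note is that, once the negatively-signed $d$-variables are moved to the opposite side, each of the $N$ balance conditions in \eqref{lemma_t2t1yq1toNm>0.02} is an equality of the form ``sum of certain $d$'s $=$ sum of the other $d$'s'', i.e.\ exactly the kind of single constraint that Lemma~\ref{lemma_t2t1yq} knows how to remove. The genuine content is therefore to eliminate all $N$ \emph{coupled} constraints rather than a single isolated one, while carrying the weight bound and the positivity refinement through each step.

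For the inductive step I would single out the constraint indexed by $i=N$. Since there is no $j>N$, this condition reads (net $d$-sum of the incoming edge blocks $(j,N)$, $j<N$) $+$ (net $d$-sum of the singleton block at $N$) $=0$, and it involves \emph{only} the blocks attached to node $N$. Regarding these blocks as the two groups of a single Lemma~\ref{lemma_t2t1yq}-type constraint, I would run the reduction of Lemma~\ref{lemma_t2t1yq} verbatim on them: peel off one $d$-variable using the constraint, absorb the resulting polynomial in the remaining $d$'s by Lemma~\ref{sumofpowers}~(ii), and split the constrained region into its equality piece and its two strict-inequality pieces, exactly as in \eqref{lemma_t2t1yq.3}--\eqref{lemma_t2t1yq.5}. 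The equality pieces factor off honest bi-brackets in $\BD$, while the inequality pieces feed a recursion of strictly smaller size.

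Once constraint $N$ has been resolved, the singleton-$N$ block is gone, but each incoming edge block $(j,N)$ survives and still feeds the constraint at node $j$ with the opposite sign. I would re-label such a block (swapping its two ordered sub-blocks, which flips the sign of its net $d$-sum) into an additional source block attached to node $j$. To accommodate these extra blocks cleanly, I would actually prove the slightly stronger statement in which each node is allowed to carry \emph{several} ordered source blocks rather than the single one of the hypotheses; the lemma as stated is then the one-block special case, and on this enlarged class the remaining conditions $i=1,\dots,N-1$ form a genuine $V^{(N-1)}$-system to which the induction hypothesis applies, the base case of one node with several source blocks subject to a single balance condition being precisely Lemma~\ref{lemma_t2t1yq}. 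Throughout, I would track the weight bound $\sum(u+t+1)$ exactly as in Lemma~\ref{lemma_t2t1yq}: each elimination does not raise it, because the degree gained from Lemma~\ref{sumofpowers} is offset by the ``$+1$'' contributed by the block that is consumed. For the $\qBD$ refinement I would carry along the observation from Lemma~\ref{lemma_t2t1yq} that, when every $t$ is $\ge 1$, the first slot of each bi-bracket produced is $\ge 2$, so every resulting bi-bracket lies in $\qBD$.

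The step I expect to be the main obstacle is the coupling of the constraints: each edge block $(i,j)$ appears in both the condition at $i$ and the condition at $j$, so resolving one balance condition in general perturbs blocks that are shared with another, and Lemma~\ref{lemma_t2t1yq} cannot simply be invoked as a black box while the other conditions remain active. The device that makes the induction honest is that constraint $N$ is the unique one with no outgoing edge and hence touches only node-$N$ blocks; this is what lets me resolve it first and push its surviving edge blocks down to lower nodes as new source blocks. Verifying that this downward absorption respects the ordered-$n$ conditions and keeps the weight and positivity bookkeeping intact---without triggering a re-sorting that would force another pass through Lemma~\ref{finite_redu1toNm>0}---is the delicate point I expect to absorb most of the effort.
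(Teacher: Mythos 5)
Your plan founders on exactly the coupling issue you flag at the end, and the device you offer does not resolve it. It is true that the condition at $i=N$ involves only blocks attached to node $N$; but \emph{every} condition $i$ involves only blocks attached to node $i$, and the relevant question is not which blocks a condition touches but which $d$-variables are \emph{private} to it. For the condition at $N$ the only private variables are the singleton ones $d_k^{(N)}$: each edge-block variable $d_k^{j,N}$ with $j<N$ also occurs in the condition at node $j$. Running the elimination of Lemma~\ref{lemma_t2t1yq} ``verbatim'' on the condition at $N$ means solving for one of these shared $d$'s, shifting the $n$-chains by a common value, and Faulhaber-summing --- and before any of that you must first interleave the $2(N-1)+2$ mutually unordered chains attached to node $N$ into the two totally ordered chains that Lemma~\ref{lemma_t2t1yq} requires, which (as in Lemma~\ref{finite_redu}) aggregates $d$'s from different blocks into new variables $\w d = \sum d$. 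After these steps the individual $d_k^{j,N}$ no longer exist as summation variables, so the conditions at the nodes $j<N$ can no longer be written down, and there is no intact block left to ``relabel as a source block at $j$.'' Strengthening the induction to allow several source blocks per node does not repair this, because the obstruction arises already in the very first elimination at node $N$.

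The paper's proof is organized precisely to avoid this: it only ever eliminates $d$-variables that occur in exactly \emph{one} of the currently active equations. First it applies Remark~\ref{rmk_rs} to the singleton blocks at \emph{all} nodes (each $d_k^{(i)}$ is private to equation $i$); this renders the equations at some nodes void, producing an active set $I_1\subset I_0=\{1,\dots,N\}$. Then, for an active node $i_1\in I_1$ and an inactive node $j_1\notin I_1$, the variables of the edge block joining them occur in only the one active equation at $i_1$ and can be safely eliminated by Remark~\ref{rmk_rs} or the proof of Lemma~\ref{lemma_t2t1yq}; this yields $I_2\subsetneq I_1$, and one iterates. The degenerate case $|I_{k+1}|=|I_k|$ is handled by summing all remaining equations, which forces the residual singleton terms to vanish and exhibits one equation as redundant. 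Your proposal is missing both this ``private variable'' selection principle and the global summing argument for the case where no equation becomes void; without them the induction on $N$ cannot be carried out as described.
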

\begin{proof}
Note that for $1 \le i \le N$ and $1 \le k \le r_i+s_i$, 
the symbol $d_{k}^{(i)}$ appears exactly once in the $N$ equations 
\eqref{lemma_t2t1yq1toNm>0.02} (that is, 
appears on the {\it left-hand-side} of the $i$-th equation). 
Applying Remark~\ref{rmk_rs} to each of the $N$ factors in 
$\displaystyle{\prod_{1 \le i \le N}}$ from 
\eqref{lemma_t2t1yq1toNm>0.01} successively, 
we are able to eliminate the symbols $d_{k}^{(i)}$ 
satisfying $1 \le i \le N$ and $1 \le k \le r_i$ 
from the the $N$ equations \eqref{lemma_t2t1yq1toNm>0.02}.
Therefore, we conclude that $V^{(N)}$ is 
a finite $\Q$-linear combination of expressions of the form 
$$
A_1 \cdot \prod_{1 \le i < j \le N}
\sum_{\substack{n_1^{i,j} > \ldots > n_{r^{i,j}}^{i,j} \ge 0, 
    d_1^{i,j}, \ldots, d_{r^{i,j}}^{i,j} \ge 1\\
    n_{r^{i,j}+1}^{i,j} > \ldots > n_{r^{i,j}+s^{i,j}}^{i,j} \ge 1\\
    d_{r^{i,j}+1}^{i,j}, \ldots, d_{r^{i,j}+s^{i,j}}^{i,j} \ge 1}}
  \prod_{k=1}^{r^{i,j}+s^{i,j}} (n_k^{i,j})^{u_k^{i,j}}
    (d_k^{i,j})^{\w t_k^{i,j}} 
\cdot q^{\sum_{k=1}^{r^{i,j}+s^{i,j}} n_k^{i,j} d_k^{i,j}}
$$
\begin{eqnarray}    \label{lemma_t2t1yq1toNm>0.1}
\cdot \prod_{i \in I_1} \,\,
\sum_{\substack{n_{i,1} > \ldots > n_{i, r_i} \ge 1, 
    d_{i,1}, \ldots, d_{i, r_i} \ge 1}}
\prod_{k=1}^{r_i} n_{i,k}^{u_{i,k}} d_{i,k}^{t_{i,k}} 
\cdot q^{\sum_{k=1}^{r_i} n_{i,k} d_{i,k}}
\end{eqnarray}
for some subset $I_1 \subset \{1, \ldots, N\}$,
together with the $|I_1|$ equations
\begin{eqnarray}    \label{lemma_t2t1yq1toNm>0.2}
\sum_{1 \le j < i} \bigg (\sum_{k=1}^{r^{j,i}} d_k^{j,i} 
    - \sum_{k=1}^{s^{j,i}} d_{r^{j,i}+k}^{j,i} \bigg )
= \sum_{N \ge j > i} \bigg (\sum_{k=1}^{r^{i,j}} d_k^{i,j} 
    - \sum_{k=1}^{s^{i,j}} d_{r^{i,j}+k}^{i,j} \bigg )
  + \sum_{k=1}^{r_i} d_{i,k}
\end{eqnarray}
indexed by $i \in I_1$ (these equations come from the void equation 
\eqref{rmk_rs.100} modified in our present setting). 
In the above, $A_1 \in \BD$ and 
\begin{eqnarray*}
& &{\rm wt}(A_1) 
   + \sum_{1 \le i < j \le N-1} \sum_{k=1}^{r^{i,j}+s^{i,j}} 
   (u_k^{i,j} + \w t_k^{i,j} + 1)
   + \sum_{i \in I_1} \sum_{k=1}^{r_i} (u_{i,k} + t_{i,k} + 1)  \\
&\le&\sum_{1 \le i < j \le N} \sum_{k=1}^{r^{i,j}+s^{i,j}} 
   (u_k^{i,j} + t_k^{i,j} + 1).
\end{eqnarray*}
Moreover, if $t_k^{i,j}, t_k^{(i')} \ge 1$ for all $k$, 
$1 \le i < j \le N$ and $1 \le i' \le N$,
then $A_1 \in \qBD$ and $\w t_k^{i,j}, t_{i',1} \ge 1$ 
for all $k$, $1 \le i < j \le N$ and $i' \in I_1$. 

Put $I_0 = \{1, \ldots, N\}$. We rewrite \eqref{lemma_t2t1yq1toNm>0.1} as
$$
A_1 \cdot \prod_{\substack{i < j\\i, j \in I_0}}
\sum_{\substack{n_1^{i,j} > \ldots > n_{r^{i,j}}^{i,j} \ge 0, 
    d_1^{i,j}, \ldots, d_{r^{i,j}}^{i,j} \ge 1\\
    n_{r^{i,j}+1}^{i,j} > \ldots > n_{r^{i,j}+s^{i,j}}^{i,j} \ge 1\\
    d_{r^{i,j}+1}^{i,j}, \ldots, d_{r^{i,j}+s^{i,j}}^{i,j} \ge 1}}
  \prod_{k=1}^{r^{i,j}+s^{i,j}} (n_k^{i,j})^{u_k^{i,j}}
    (d_k^{i,j})^{\w t_k^{i,j}} 
\cdot q^{\sum_{k=1}^{r^{i,j}+s^{i,j}} n_k^{i,j} d_k^{i,j}}
$$
\begin{eqnarray}    \label{lemma_t2t1yq1toNm>0.100}
\cdot \prod_{i \in I_1} \,\,
\sum_{\substack{n_{i,1} > \ldots > n_{i, r_i} \ge 1, 
    d_{i,1}, \ldots, d_{i, r_i} \ge 1}}
\prod_{k=1}^{r_i} n_{i,k}^{u_{i,k}} d_{i,k}^{t_{i,k}} 
\cdot q^{\sum_{k=1}^{r_i} n_{i,k} d_{i,k}}.
\end{eqnarray}
Similarly, we rewrite \eqref{lemma_t2t1yq1toNm>0.2} with $i \in I_1$ as
\begin{eqnarray}    \label{lemma_t2t1yq1toNm>0.101}
\sum_{\substack{j < i\\i, j \in I_0}} \bigg (\sum_{k=1}^{r^{j,i}} d_k^{j,i} 
    - \sum_{k=1}^{s^{j,i}} d_{r^{j,i}+k}^{j,i} \bigg )
= \sum_{\substack{j < i\\i, j \in I_0}} 
    \bigg (\sum_{k=1}^{r^{i,j}} d_k^{i,j} 
    - \sum_{k=1}^{s^{i,j}} d_{r^{i,j}+k}^{i,j} \bigg )
  + \sum_{k=1}^{r_i} d_{i,k}
\end{eqnarray}

If $|I_1| = |I_0|$, then $I_1 = I_0$. 
Adding up the $|I_1|$ equations \eqref{lemma_t2t1yq1toNm>0.101} yields
$$
\sum_{i \in I_1} \sum_{k=1}^{r_i} d_{i,k} = 0.
$$
So $r_i = 0$ for all $i \in I_1$. In addition,
since it is redundant, we can drop any equation from 
the $|I_1|$ equations \eqref{lemma_t2t1yq1toNm>0.101}. 
Thus the expression \eqref{lemma_t2t1yq1toNm>0.1} becomes 
$$
A_1 \cdot \prod_{\substack{i < j\\i, j \in I_0}}
\sum_{\substack{n_1^{i,j}, \ldots, n_{r^{i,j}}^{i,j} \ge 0, 
    d_1^{i,j}, \ldots, d_{r^{i,j}}^{i,j} \ge 1\\
    n_{r^{i,j}+1}^{i,j}, \ldots, n_{r^{i,j}+s^{i,j}}^{i,j} \ge 1\\
    d_{r^{i,j}+1}^{i,j}, \ldots, d_{r^{i,j}+s^{i,j}}^{i,j} \ge 1}}
  \prod_{k=1}^{r^{i,j}+s^{i,j}} (n_k^{i,j})^{u_k^{i,j}}
    (d_k^{i,j})^{\w t_k^{i,j}} 
\cdot q^{\sum_{k=1}^{r^{i,j}+s^{i,j}} n_k^{i,j} d_k^{i,j}}
$$
together with the condition that for $i \in I_0 - \{N\}$,
$$
  \sum_{1 \le j < i} \bigg (\sum_{k=1}^{r^{j,i}} d_k^{j,i} 
    - \sum_{k=1}^{s^{j,i}} d_{r^{j,i}+k}^{j,i} \bigg )
= \sum_{N-1 \ge j > i} \bigg (\sum_{k=1}^{r^{i,j}} d_k^{i,j} 
    - \sum_{k=1}^{s^{i,j}} d_{r^{i,j}+k}^{i,j} \bigg ).
$$
In other words, this case is the same as the case when 
$|I_1| = |I_0|-1$ and $I_1 = I_0 - \{N\}$.
Therefore, we assume $|I_1| \le |I_0|-1$ below. 

Put $J_1 = \{1, \ldots, N\} - I_1$.
Since $|I_1| \le N-1$, $J_1 \ne \emptyset$. 
Let $i_1 \in I_1$ and $j_1 \in J_1$.
If $j_1 < i_1$, then for each $1 \le k \le r^{j_1, i_1}+s^{j_1, i_1}$,
the symbol $d_{k}^{j_1,i_1}$ appears exactly once in 
the $|I_1|$ equations \eqref{lemma_t2t1yq1toNm>0.2} (it appears on 
the {\it left-hand-side} of the equation indexed by $i_1$). 
Again, we apply Remark~\ref{rmk_rs} to 
the factors of $\displaystyle{\prod_{1 \le i < j \le N}}$ in 
\eqref{lemma_t2t1yq1toNm>0.1} indexed by all these ordered pairs $j_1<i_1$ 
to eliminate all the symbols $d_{k}^{j_1, i_1}$ satisfying 
$1 \le k \le r^{j_1, i_1}$ from the $|I_1|$ equations 
\eqref{lemma_t2t1yq1toNm>0.2}. On the other hand, if $i_1 < j_1$, 
then for each $1 \le k \le r^{i_1,j_1}+s^{i_1,j_1}$,
the symbol $d_{k}^{i_1,j_1}$ appears exactly once in the $|I_1|$ equations \eqref{lemma_t2t1yq1toNm>0.2} (it appears on 
the {\it right-hand-side} of the equation indexed by $i_1$). 
This time, we apply the proof of Lemma~\ref{lemma_t2t1yq} to 
the factors of $\displaystyle{\prod_{1 \le i < j \le N}}$ 
in \eqref{lemma_t2t1yq1toNm>0.1} indexed by all these ordered pairs 
$i_1<j_1$ to eliminate all the symbols $d_{k}^{i_1,j_1}$ satisfying 
$r^{i_1,j_1}+ 1 \le k \le r^{i_1,j_1}+s^{i_1,j_1}$ 
from the $|I_1|$ equations \eqref{lemma_t2t1yq1toNm>0.2}. 
So $V^{(N)}$ is 
a finite $\Q$-linear combination of expressions of the form 
$$
A_2 \cdot \prod_{\substack{i < j\\i, j \in I_1}}
\sum_{\substack{n_1^{i,j} > \ldots > n_{r^{i,j}}^{i,j} \ge 0, 
    d_1^{i,j}, \ldots, d_{r^{i,j}}^{i,j} \ge 1\\
    n_{r^{i,j}+1}^{i,j} > \ldots > n_{r^{i,j}+s^{i,j}}^{i,j} \ge 1\\
    d_{r^{i,j}+1}^{i,j}, \ldots, d_{r^{i,j}+s^{i,j}}^{i,j} \ge 1}}
  \prod_{k=1}^{r^{i,j}+s^{i,j}} (n_k^{i,j})^{u_k^{i,j}}
    (d_k^{i,j})^{\hat t_k^{i,j}} 
\cdot q^{\sum_{k=1}^{r^{i,j}+s^{i,j}} n_k^{i,j} d_k^{i,j}}
$$
\begin{eqnarray}    \label{lemma_t2t1yq1toNm>0.3}
\cdot \prod_{i \in I_2} \,\,
\sum_{\substack{n_{i,1} > \ldots > n_{i, r_i} \ge 1, 
    \w d_{i,1}, \ldots, \w d_{i, \w r_i} \ge 1}}
\prod_{k=1}^{\w r_i} n_{i,k}^{\w u_{i,k}} \w d_{i,k}^{\w t_{i,k}} 
\cdot q^{\sum_{k=1}^{r_i} n_{i,k} \w d_{i,k}}
\end{eqnarray}
for some subset $I_2 \subset I_1$, together with the $|I_2|$ equations
\begin{eqnarray}    \label{lemma_t2t1yq1toNm>0.4}
  \sum_{\substack{j < i\\i, j \in I_1}} 
  \bigg (\sum_{k=1}^{r^{j,i}} d_k^{j,i} 
    - \sum_{k=1}^{s^{j,i}} d_{r^{j,i}+k}^{j,i} \bigg )
= \sum_{\substack{i < j\\i, j \in I_1}} 
  \bigg (\sum_{k=1}^{r^{i,j}} d_k^{i,j} 
    - \sum_{k=1}^{s^{i,j}} d_{r^{i,j}+k}^{i,j} \bigg )
  + \sum_{k=1}^{\w r_i} \w d_{i,k}
\end{eqnarray}
indexed by $i \in I_2$. In the above expression, $A_2 \in \BD$ and 
\begin{eqnarray*}
& &{\rm wt}(A_2) 
   + \sum_{\substack{i < j\\i, j \in I_1}} \sum_{k=1}^{r^{i,j}+s^{i,j}} 
   (u_k^{i,j} + \hat t_k^{i,j} + 1)
   + \sum_{i \in I_2} \sum_{k=1}^{\w r_i} (\w u_{i,k} + \w t_{i,k} + 1)  \\
&\le&\sum_{1 \le i < j \le N} \sum_{k=1}^{r^{i,j}+s^{i,j}} 
   (u_k^{i,j} + t_k^{i,j} + 1).
\end{eqnarray*}
Moreover, if $t_k^{i,j}, t_k^{(i')} \ge 1$ for all $k$, 
$1 \le i < j \le N$ and $1 \le i' \le N$,
then $A_2 \in \qBD$ and $\w t_k^{i,j}, t_{i',1} \ge 1$ 
for all $k$, $i < j$ with $i,j \in I_1$, and $i' \in I_2$. 
As in the preceding paragraph, we may assume $|I_2| < |I_1|$. 
Put $J_2 = I_1 - I_2$. Then, $J_2 \ne \emptyset$.
Now repeating the above arguments, we complete the proof of the lemma.
\end{proof}

\begin{proposition}  \label{Prop_t2t1yq1toNm>0} 
Let $N \ge 2$. For $1 \le i < j \le N$, fix $r^{i,j}, s^{i,j} \ge 0$. 
For $1 \le i < j \le N$ and $1 \le k \le r^{i,j} + s^{i,j}$, 
fix $u_k^{i,j}, t_k^{i,j} \ge 0$. 
For $1 \le i \le N$, fix $r_i, s_i \ge 0$. For $1 \le i \le N$
and $1 \le k \le r_i + s_i$, fix $u_k^{(i)}, t_k^{(i)} \ge 0$. 
Let $\W V^{(N)}$ denote
$$
\prod_{1 \le i < j \le N}
\sum_{\substack{n_1^{i,j}, \ldots, n_{r^{i,j}}^{i,j} \ge 0, 
    d_1^{i,j}, \ldots, d_{r^{i,j}}^{i,j} \ge 1\\
    n_{r^{i,j}+1}^{i,j}, \ldots, n_{r^{i,j}+s^{i,j}}^{i,j} \ge 1, 
    d_{r^{i,j}+1}^{i,j}, \ldots, d_{r^{i,j}+s^{i,j}}^{i,j} \ge 1}}
  \prod_{k=1}^{r^{i,j}+s^{i,j}} (n_k^{i,j})^{u_k^{i,j}}
    (d_k^{i,j})^{t_k^{i,j}} 
\cdot q^{\sum_{k=1}^{r^{i,j}+s^{i,j}} n_k^{i,j} d_k^{i,j}}
$$
$$
\cdot \prod_{1 \le i \le N}
\sum_{\substack{n_1^{(i)}, \ldots, n_{r_i}^{(i)} \ge 0, 
    d_1^{(i)}, \ldots, d_{r_i}^{(i)} \ge 1\\
    n_{r_i+1}^{(i)}, \ldots, n_{r_i+s_i}^{(i)} \ge 1,
    d_{r_i+1}^{(i)}, \ldots, d_{r_i+s_i}^{(i)} \ge 1}}
  \prod_{k=1}^{r_i+s_i} (n_k^{(i)})^{u_k^{(i)}} (d_k^{(i)})^{t_k^{(i)}}
\cdot q^{\sum_{k=1}^{r_i+s_i} n_k^{(i)} d_k^{(i)}}
$$
together with the condition that for every $1 \le i \le N$,
\begin{eqnarray*}
& &\sum_{1 \le j < i} \bigg (\sum_{k=1}^{r^{j,i}} d_k^{j,i} 
    - \sum_{k=1}^{s^{j,i}} d_{r^{j,i}+k}^{j,i} \bigg )
  + \bigg (\sum_{k=1}^{r_i} d_k^{(i)} 
    - \sum_{k=1}^{s_i} d_{r_i+k}^{(i)} \bigg )    \\
&=&\sum_{N \ge j > i} \bigg (\sum_{k=1}^{r^{i,j}} d_k^{i,j} 
    - \sum_{k=1}^{s^{i,j}} d_{r^{i,j}+k}^{i,j} \bigg ),
\end{eqnarray*}
Then, $\W V^{(N)} \in \BD$ with weight at most   
$$
\sum_{1 \le i < j \le N} \sum_{k=1}^{r^{i,j}+s^{i,j}} 
  (u_k^{i,j} + t_k^{i,j} + 1)
+ \sum_{1 \le i \le N} \sum_{k=1}^{r_i+s_i} (u_k^{(i)} +t_k^{(i)} +1).
$$
Moreover, if $t_k^{i,j}, t_k^{(i')} \ge 1$ for all $k$, $i < j$ and $i'$, 
then $\W V^{(N)} \in \qBD$. 
\end{proposition}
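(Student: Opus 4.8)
The plan is to deduce $\W V^{(N)}$ immediately from the two preceding lemmas, in exact analogy with how Proposition~\ref{Prop_t2t1yq} was obtained from Lemma~\ref{finite_redu} and Lemma~\ref{lemma_t2t1yq}. The essential observation is that $\W V^{(N)}$ carries \emph{unordered} summation indices $n_1^{i,j}, \ldots, n_{r^{i,j}}^{i,j}$ (and similarly for the indices $n_k^{(i)}$ in the $\prod_{1 \le i \le N}$ block), whereas the quantity $V^{(N)}$ analyzed in Lemma~\ref{lemma_t2t1yq1toNm>0} imposes the \emph{ordered} conditions $n_1^{i,j} > \cdots > n_{r^{i,j}}^{i,j} \ge 0$ and $n_{r^{i,j}+1}^{i,j} > \cdots > n_{r^{i,j}+s^{i,j}}^{i,j} \ge 1$, together with the corresponding orderings in the other block. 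The bridge between unordered and ordered indices is precisely Lemma~\ref{finite_redu1toNm>0}.

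First I would apply Lemma~\ref{finite_redu1toNm>0} to $\W V^{(N)}$, writing it as a finite $\Q$-linear combination of expressions of the form \eqref{finite_redu1toNm>0.01}, each carrying the balance condition on the tilded $d$-indices recorded in that lemma. Next I would observe that each summand is literally an instance of $V^{(N)}$ from Lemma~\ref{lemma_t2t1yq1toNm>0}, upon relabeling the tilded parameters $f^{i,j}, g^{i,j}, \w u_k^{i,j}, \w t_k^{i,j}$ (etc.) as the data $r^{i,j}, s^{i,j}, u_k^{i,j}, t_k^{i,j}$ of that lemma: both the ordering conditions and the balance equation \eqref{lemma_t2t1yq1toNm>0.02} match term for term. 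Applying Lemma~\ref{lemma_t2t1yq1toNm>0} then places every summand in $\BD$, and since $\BD$ is a $\Q$-vector space it follows that $\W V^{(N)} \in \BD$.

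The weight and positivity assertions follow by chaining the bookkeeping already supplied by the two lemmas. Lemma~\ref{finite_redu1toNm>0} guarantees that the total of $(\w u + \w t + 1)$ over all tilded parameters does not exceed the total of $(u + t + 1)$ over all original parameters, while Lemma~\ref{lemma_t2t1yq1toNm>0} bounds the weight of each resulting summand by the former total; composing these yields the stated bound. For the $\qBD$ claim, if every $t_k^{i,j}$ and $t_k^{(i')}$ is $\ge 1$ then Lemma~\ref{finite_redu1toNm>0} forces every $\w t_k^{i,j}$ and $\w t_k^{(i')}$ to be $\ge 1$, whereupon Lemma~\ref{lemma_t2t1yq1toNm>0} places each summand in $\qBD$. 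Since no genuinely new calculation is involved here, I do not expect a real obstacle; the one point to check with care is that the side conditions produced by Lemma~\ref{finite_redu1toNm>0} coincide verbatim with the hypotheses of Lemma~\ref{lemma_t2t1yq1toNm>0}, so that the propagation of the lower bound $t \ge 1$ through both steps is exactly what upgrades membership from $\BD$ to $\qBD$.
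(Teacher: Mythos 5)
Your proposal is correct and is exactly the paper's argument: the paper's proof of this proposition is the one-line deduction from Lemma~\ref{finite_redu1toNm>0} (passing from unordered to ordered indices) followed by Lemma~\ref{lemma_t2t1yq1toNm>0} (membership in $\BD$ or $\qBD$ with the stated weight bound). Your chaining of the weight estimates and of the propagation of the condition $t \ge 1$ through both lemmas matches the intended reasoning.
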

\begin{proof}
Follows immediately from Lemma~\ref{finite_redu1toNm>0} and 
Lemma~\ref{lemma_t2t1yq1toNm>0}.
\end{proof}

The following result is our Theorem~\ref{Intro-yij1toNm>0}.

\begin{theorem}  \label{yij1toNm>0} 
Let notations be from Definition~\ref{def-zmvn}. Then,
\begin{enumerate}
\item[{\rm (i)}]
for $m_\ell^{i,j}, n_\ell^{i,j}, m_i, n_i \ge 0$, the coefficient of 
$$
\prod_{1 \le i < j \le N, 1 \le \ell \le 2} (z_\ell^{i,j})^{m_\ell^{i,j}}
(v_\ell^{i,j})^{n_\ell^{i,j}} \cdot \prod_{1 \le i \le N} 
z_i^{m_i} v_i^{n_i}
$$ 
in 
$
\Coe_{y_1^0 \cdots y_N^0}(\mathfrak P_{N}^{a,b})
$ 
is contained in $\BD[a, b]$ with weight at most 
\begin{eqnarray}    \label{yij1toNm>0.01}
\sum_{1 \le i < j \le N, 1 \le \ell \le 2} (m_\ell^{i,j} + n_\ell^{i,j})
+ \sum_{1 \le i \le N} (m_i + n_i)
\end{eqnarray}
and with degree in ($a$ and $b$) at most 
$
    \sum_{1 \le i \le N} (m_i + n_i)
$.

\item[{\rm (ii)}] 
for $a=b=0$ and $m_\ell^{i,j}, n_\ell^{i,j}\ge 0$, the coefficient of 
$$
\prod_{1 \le i < j \le N, 1 \le \ell \le 2} (z_\ell^{i,j})^{m_\ell^{i,j}}
(v_\ell^{i,j})^{n_\ell^{i,j}}
$$ 
in 
$
\Coe_{y_1^0 \cdots y_N^0}(\mathfrak P_{N}^{0,0})
$ 
is contained in $\qBD$ with weight at most 
\begin{eqnarray}    \label{yij1toNm>0.03}
\sum_{1 \le i < j \le N, 1 \le \ell \le 2} (m_\ell^{i,j} + n_\ell^{i,j}).
\end{eqnarray}
\end{enumerate}
\end{theorem}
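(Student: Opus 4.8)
The plan is to follow the strategy used in the proof of Theorem~\ref{t2t1yq}, now applied to every factor of $\mathfrak P_N^{a,b}$ simultaneously, and to invoke Proposition~\ref{Prop_t2t1yq1toNm>0} in place of Proposition~\ref{Prop_t2t1yq}. First I would expand each factor separately. For a fixed pair $1 \le i < j \le N$, the factor $\frac{(x_1^{i,j}x_2^{i,j}y_{i,j})_\infty (y_{i,j})_\infty}{(x_1^{i,j}y_{i,j})_\infty (x_2^{i,j}y_{i,j})_\infty}$ has exactly the shape treated in Theorem~\ref{t2t1yq}, with $y$ replaced by $y_{i,j} = y_i^{-1} y_j$; writing it as $e^{f}$ and applying \eqref{AAA06250955am.1}, the coefficient of $(z_1^{i,j})^{m_1^{i,j}} (z_2^{i,j})^{m_2^{i,j}}$ becomes a finite $\Q$-linear combination of products of summations $\sum_{n \ge 0}\sum_{d \ge 1} d^{t+j-1} y_{i,j}^{d} q^{nd}$ with $t, j \ge 1$, exactly as in \eqref{t2t1yq.2}, and the companion $u$-factor likewise produces summations $\sum_{n \ge 1}\sum_{d \ge 1} d^{t+j-1} y_{i,j}^{-d} q^{nd}$. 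The single-index factor $\big(\frac{(x_iy_i)_\infty}{(y_i)_\infty}\big)^a = e^{a g(z_i)}$ is the genuinely new feature relative to the basic case; here \eqref{BO6.9z=0} applied with $f = a g$ shows the coefficient of $z_i^{m_i}$ is a finite $\Q$-linear combination of terms $\prod_t (a\, D_{x_i}^t g/t!)^{k_t}$ with $\sum_t t k_t = m_i$, which I would record as a polynomial in $a$ of degree $\sum_t k_t \le m_i$, each term being a product of summations $\sum_{n \ge 0}\sum_{d \ge 1} d^{t-1} y_i^{d} q^{nd}$; the $b$-factor is treated identically, with $y_i^{-d}$ and $n \ge 1$.

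Next I would multiply all of these expansions together and extract $\Coe_{y_1^0 \cdots y_N^0}$. The key bookkeeping step is that the pair factor for $(i,j)$ contributes $y_i^{-d}y_j^{+d}$ from its $x$-part and $y_i^{+d}y_j^{-d}$ from its $u$-part, while the $a$- and $b$-factors at index $i$ contribute $y_i^{+d}$ and $y_i^{-d}$ respectively. Collecting the total power of each $y_i$ and setting it to zero yields precisely the $N$ linear relations on the $d$-variables appearing in the hypothesis of Proposition~\ref{Prop_t2t1yq1toNm>0}, with the first $r^{i,j}$ (resp. $r_i$) summation indices matching the $x$- (resp. $a$-) parts and the last $s^{i,j}$ (resp. $s_i$) matching the $u$- (resp. $b$-) parts, and with the $n \ge 0$ versus $n \ge 1$ ranges matching as well. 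At this point $\Coe_{y_1^0 \cdots y_N^0}(\mathfrak P_N^{a,b})$ is a finite $\Q[a,b]$-linear combination of expressions of exactly the form $\W V^{(N)}$, and Proposition~\ref{Prop_t2t1yq1toNm>0} gives membership in $\BD$ together with its weight bound $\sum(u_k + t_k + 1)$. Recalling that each $d$-exponent $t$ contributes $t-1$ and each of the two-variable expansion indices contributes $1$, one checks that this bound collapses to \eqref{yij1toNm>0.01}, and that the total degree in $(a,b)$ is at most $\sum_i(m_i + n_i)$; this establishes part (i).

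For part (ii) I would set $a = b = 0$, which removes the single-index factors entirely (so $r_i = s_i = 0$) and leaves only the pair factors. The crucial point is that in every pair factor the $d$-exponent is of the form $t + j - 1$ with $t, j \ge 1$, hence is $\ge 1$; thus all $t_k^{i,j} \ge 1$ in the resulting $\W V^{(N)}$, and the final clause of Proposition~\ref{Prop_t2t1yq1toNm>0} upgrades the conclusion from $\BD$ to $\qBD$, with weight bound \eqref{yij1toNm>0.03}. By contrast, in part (i) the single-index factors produce $d$-exponents $t-1$ that vanish when $t=1$, which is exactly why one can assert only membership in $\BD[a,b]$ there. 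I expect the main obstacle to be not conceptual --- the heavy lifting lives in Proposition~\ref{Prop_t2t1yq1toNm>0} --- but rather the careful simultaneous bookkeeping across all $\binom{N}{2} + N$ factors: checking that the $y_i$-power cancellation reproduces the precise relation of the proposition, and that the multinomial Bloch-Okounkov expansion \eqref{BO6.9} keeps both the weight and the $(a,b)$-degree within the claimed bounds.
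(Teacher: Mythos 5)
Your proposal is correct and follows essentially the same route as the paper: expand each pair factor and each single-index factor via the Bloch--Okounkov formula \eqref{AAA06250955am.1} (recording the $a^r b^s$ prefactors for the single-index ones), extract $\Coe_{y_1^0\cdots y_N^0}$ to produce the $N$ linear constraints on the $d$-variables, and feed the result into Proposition~\ref{Prop_t2t1yq1toNm>0}, with the observation that the pair factors contribute $d$-exponents $t+j-1\ge 1$ being exactly what upgrades $\BD$ to $\qBD$ in part (ii). The weight and $(a,b)$-degree bookkeeping you describe matches \eqref{yij1toNm>0.5} and \eqref{yij1toNm>0.6} in the paper's proof.
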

\begin{proof}
(i) By \eqref{t2t1yq.50}, the coefficient of 
$\prod_{1 \le \ell \le 2} (z_\ell^{i,j})^{m_\ell^{i,j}}
(v_\ell^{i,j})^{n_\ell^{i,j}}$ in 
$$
\frac{(x_1^{i,j}x_2^{i,j}y_{i,j})_\infty (y_{i,j})_\infty}
     {(x_1^{i,j}y_{i,j})_\infty (x_2^{i,j}y_{i,j})_\infty} 
   \frac{(u_1^{i,j}u_2^{i,j}y_{i,j}^{-1}q)_\infty (y_{i,j}^{-1}q)_\infty}
     {(u_1^{i,j}y_{i,j}^{-1}q)_\infty (u_2^{i,j}y_{i,j}^{-1}q)_\infty}
$$
is a finite $\Q$-linear combination of expressions of the form
$$ 
\sum_{\substack{n_1, \ldots, n_r \ge 0, d_1, \ldots, d_r \ge 1\\
    n_{r+1}, \ldots, n_{r+s} \ge 1, d_{r+1}, \ldots, d_{r+s} \ge 1}}
  \prod_{k=1}^{r+s} d_k^{t_k} 
  \cdot q^{\sum_{k=1}^{r+s} n_k d_k}
  y_{i,j}^{\sum_{k=1}^r d_k - \sum_{k=1}^{s} d_{r+k}}
$$
where $t_k \ge 1$ for every $k$ and $\sum_{k=1}^{r+s} (t_{k} +1) 
= \sum_{1 \le \ell \le 2} (m_\ell^{i,j} + n_\ell^{i,j})$. 
So the coefficient of $\prod_{1 \le i < j \le N, 1 \le \ell \le 2} 
(z_\ell^{i,j})^{m_\ell^{i,j}}(v_\ell^{i,j})^{n_\ell^{i,j}}$ in 
\begin{eqnarray}    \label{yij1toNm>0.99}
\prod_{1 \le i < j \le N} 
   \frac{(x_1^{i,j}x_2^{i,j}y_{i,j})_\infty (y_{i,j})_\infty}
     {(x_1^{i,j}y_{i,j})_\infty (x_2^{i,j}y_{i,j})_\infty} 
   \frac{(u_1^{i,j}u_2^{i,j}y_{i,j}^{-1}q)_\infty (y_{i,j}^{-1}q)_\infty}
     {(u_1^{i,j}y_{i,j}^{-1}q)_\infty (u_2^{i,j}y_{i,j}^{-1}q)_\infty}
\end{eqnarray}
is a finite $\Q$-linear combination of expressions of the form
$$ 
\prod_{1 \le i < j \le N}
\sum_{\substack{n_1^{i,j}, \ldots, n_{r^{i,j}}^{i,j} \ge 0, 
    d_1^{i,j}, \ldots, d_{r^{i,j}}^{i,j} \ge 1\\
    n_{r^{i,j}+1}^{i,j}, \ldots, n_{r^{i,j}+s^{i,j}}^{i,j} \ge 1, 
    d_{r^{i,j}+1}^{i,j}, \ldots, d_{r^{i,j}+s^{i,j}}^{i,j} \ge 1}}
  \prod_{k=1}^{r^{i,j}+s^{i,j}} (d_k^{i,j})^{t_k^{i,j}} 
$$
\begin{eqnarray}    \label{yij1toNm>0.100}
\cdot q^{\sum_{k=1}^{r^{i,j}+s^{i,j}} n_k^{i,j} d_k^{i,j}}
  y_{i,j}^{\sum_{k=1}^{r^{i,j}} d_k^{i,j} 
    - \sum_{k=1}^{s^{i,j}} d_{r+k}^{i,j}}
\end{eqnarray}
where $t_k^{i,j} \ge 1$ for all $k$ and $1 \le i < j \le N$, and
\begin{eqnarray}    \label{yij1toNm>0.101}
  \sum_{1 \le i < j \le N} \sum_{k=1}^{r^{i,j}+s^{i,j}} (t_k^{i,j} +1)
= \sum_{1 \le i < j \le N, 1 \le \ell \le 2} 
   (m_\ell^{i,j} + n_\ell^{i,j}).
\end{eqnarray}

Next, regard $((x_iy_i)_\infty (y_i)_\infty^{-1})^a$ 
as an element in $\Q[[z_i,q,y_i,a]]$. Put
$$
f(z_i) = a \ln\big ((x_iy_i)_\infty (y_i)_\infty^{-1} \big ).
$$
Then, $f(0) = 0$. For $m \ge 0$, let $a_{m} = \Coe_{z_i^m} 
((x_iy_i)_\infty (y_i)_\infty^{-1})^a \in \C[[q,y_i, a]]$. 
Then,
\begin{eqnarray}   \label{yij1toNm>0.1}
  a_{m}
= \sum_{k_1+2k_2+3k_3+\cdots = m}
  \frac{1}{k_1! k_2! k_3! \cdots} \prod_{t \ge 1}
  \left (\frac{D_{x_i}^t f}{t!} \Big |_{x_i=1}\right )^{k_t}
\end{eqnarray} 
by \eqref{AAA06250955am.1}.
Since $f(z_i) = a \ln\big ((y_i)_\infty^{-1} \big ) 
+ a \sum_{n \ge 0} \ln (1-x_iy_iq^n)$, we have
$$ 
   D_{x_i}f  
= -a \sum_{n \ge 0} \frac{x_iy_iq^n}{1-x_iy_iq^n}  
= -a \sum_{n \ge 0} \sum_{d \ge 1} (x_iy_iq^n)^d.
$$
So for $t \ge 1$, we get 
$$
  D_{x_i}^tf|_{x_i=1} 
= -a \sum_{n \ge 0} \sum_{d \ge 1} d^{t-1} (y_iq^n)^d. 
$$
By \eqref{yij1toNm>0.1}, for $m \ge 1$, $a_m$ is 
a finite $\Q$-linear combination of expressions of the form:
\begin{eqnarray}   \label{yij1toNm>0.2}
a^r \prod_{j=1}^r \left (\sum_{n_j \ge 0} \sum_{d_j \ge 1} 
d_j^{t_j-1} y^{d_j}q^{n_j d_j} \right )
\end{eqnarray}
where $t_j \ge 1$ and $r \le \sum_{j=1}^r t_j = m$. 
Note that since $a_0 = 1$, 
we have
\begin{eqnarray}   \label{yij1toNm>0.3}
   \bigg (\frac{(x_iy_i)_\infty}{(y_i)_\infty} \bigg )^a
= 1 + \sum_{m \ge 1} a_m z_i^m
\in 1 + \Q[[q, y_i, a, z_i]] \cdot z_i. 
\end{eqnarray}

Similarly, for $n \ge 1$, the coefficient of $v_i^n$ in 
$$
   \bigg (\frac{(u_iy_i^{-1}q)_\infty}{(y_i^{-1}q)_\infty} \bigg )^b
\in 1 + \Q[[q, y_i^{-1}, b, v_i]] \cdot v_i 
$$
is a finite $\Q$-linear combination of expressions of the form:
\begin{eqnarray}   \label{yij1toNm>0.4}
b^s \prod_{j=1}^s \left (\sum_{n_j \ge 0} \sum_{d_j \ge 1} 
d_j^{t_j-1} (y^{-1})^{d_j}q^{n_j d_j} \right )
\end{eqnarray}
where $t_j \ge 1$ and $s \le \sum_{j=1}^s t_j = n$. For simplicity, put 
$$
  {\rm z^mv^n} 
= \prod_{1 \le i < j \le N, 1 \le \ell \le 2} (z_\ell^{i,j})^{m_\ell^{i,j}}
  (v_\ell^{i,j})^{n_\ell^{i,j}} \cdot \prod_{1 \le i \le N} 
  z_i^{m_i} v_i^{n_i}.
$$
By \eqref{yij1toNm>0.100}, \eqref{yij1toNm>0.2} and \eqref{yij1toNm>0.4},
$\Coe_{{\rm z^mv^n}} \mathfrak P_N^{a,b}$
is a finite $\Q$-linear combination of expressions of the form
$$ 
\prod_{1 \le i < j \le N}
\sum_{\substack{n_1^{i,j}, \ldots, n_{r^{i,j}}^{i,j} \ge 0, 
    d_1^{i,j}, \ldots, d_{r^{i,j}}^{i,j} \ge 1\\
    n_{r^{i,j}+1}^{i,j}, \ldots, n_{r^{i,j}+s^{i,j}}^{i,j} \ge 1\\
    d_{r^{i,j}+1}^{i,j}, \ldots, d_{r^{i,j}+s^{i,j}}^{i,j} \ge 1}}
  \prod_{k=1}^{r^{i,j}+s^{i,j}} (d_k^{i,j})^{t_k^{i,j}} 
\cdot q^{\sum_{k=1}^{r^{i,j}+s^{i,j}} n_k^{i,j} d_k^{i,j}}
  y_{i,j}^{\sum_{k=1}^{r^{i,j}} d_k^{i,j} 
    - \sum_{k=1}^{s^{i,j}} d_{r^{i,j}+k}^{i,j}}
$$
$$
\cdot a^rb^s \prod_{1 \le i \le N}
\sum_{\substack{n_1^{(i)}, \ldots, n_{r_i}^{(i)} \ge 0, 
    d_1^{(i)}, \ldots, d_{r_i}^{(i)} \ge 1\\
    n_{r_i+1}^{(i)}, \ldots, n_{r_i+s_i}^{(i)} \ge 1\\
    d_{r_i+1}^{(i)}, \ldots, d_{r_i+s_i}^{(i)} \ge 1}}
  \prod_{k=1}^{r_i+s_i} (d_k^{(i)})^{t_k^{(i)}}
\cdot q^{\sum_{k=1}^{r_i+s_i} n_k^{(i)} d_k^{(i)}}
  y_i^{\sum_{k=1}^{r_i} d_k^{(i)} 
    - \sum_{k=1}^{s_i} d_{r_i+k}^{(i)}}
$$
where $t_k^{i,j} \ge 1$ for all $k$ and $1 \le i < j \le N$, 
$t_k^{(i)} \ge 0$ for all $k$ and $1 \le i \le N$, 
\begin{eqnarray}    \label{yij1toNm>0.5}
& &\sum_{1 \le i < j \le N} \sum_{k=1}^{r^{i,j}+s^{i,j}} (t_k^{i,j} +1)
   + \sum_{1 \le i \le N} \sum_{k=1}^{r_i+s_i} (t_k^{(i)} +1)
   \nonumber   \\
&=&\sum_{1 \le i < j \le N, 1 \le \ell \le 2} (m_\ell^{i,j} + n_\ell^{i,j})
   + \sum_{1 \le i \le N} (m_i + n_i),
\end{eqnarray}
and $r, s \ge 0$ satisfy the inequalities
\begin{eqnarray}    \label{yij1toNm>0.6}
r+s \le \sum_{1 \le i \le N} \sum_{k=1}^{r_i+s_i} (t_k^{(i)} +1) 
\le \sum_{1 \le i \le N} (m_i + n_i).
\end{eqnarray}
It follows that the coefficient of ${\rm z^mv^n}$ in 
$\Coe_{y_1^0 \cdots y_N^0}(\mathfrak P_N^{a,b})$ 
is a finite $\Q$-linear combination of expressions of the form
$$ 
a^rb^s \cdot \prod_{1 \le i < j \le N}
\sum_{\substack{n_1^{i,j}, \ldots, n_{r^{i,j}}^{i,j} \ge 0, 
    d_1^{i,j}, \ldots, d_{r^{i,j}}^{i,j} \ge 1\\
    n_{r^{i,j}+1}^{i,j}, \ldots, n_{r^{i,j}+s^{i,j}}^{i,j} \ge 1,
    d_{r^{i,j}+1}^{i,j}, \ldots, d_{r^{i,j}+s^{i,j}}^{i,j} \ge 1}}
  \prod_{k=1}^{r^{i,j}+s^{i,j}} (d_k^{i,j})^{t_k^{i,j}} 
\cdot q^{\sum_{k=1}^{r^{i,j}+s^{i,j}} n_k^{i,j} d_k^{i,j}}
$$
$$
\cdot \prod_{1 \le i \le N}
\sum_{\substack{n_1^{(i)}, \ldots, n_{r_i}^{(i)} \ge 0, 
    d_1^{(i)}, \ldots, d_{r_i}^{(i)} \ge 1\\
    n_{r_i+1}^{(i)}, \ldots, n_{r_i+s_i}^{(i)} \ge 1,
    d_{r_i+1}^{(i)}, \ldots, d_{r_i+s_i}^{(i)} \ge 1}}
  \prod_{k=1}^{r_i+s_i} (d_k^{(i)})^{t_k^{(i)}}
\cdot q^{\sum_{k=1}^{r_i+s_i} n_k^{(i)} d_k^{(i)}}
$$
together with the condition that for every $1 \le i \le N$,
\begin{eqnarray*}
& &\sum_{1 \le j < i} \bigg (\sum_{k=1}^{r^{j,i}} d_k^{j,i} 
    - \sum_{k=1}^{s^{j,i}} d_{r^{j,i}+k}^{j,i} \bigg )
  + \bigg (\sum_{k=1}^{r_i} d_k^{(i)} 
    - \sum_{k=1}^{s_i} d_{r_i+k}^{(i)} \bigg )    \\
&=&\sum_{N \ge j > i} \bigg (\sum_{k=1}^{r^{i,j}} d_k^{i,j} 
    - \sum_{k=1}^{s^{i,j}} d_{r^{i,j}+k}^{i,j} \bigg ).
\end{eqnarray*}
By Proposition~\ref{Prop_t2t1yq1toNm>0} and \eqref{yij1toNm>0.5}, 
the coefficient of ${\rm z^mv^n}$ in 
$\Coe_{y_1^0 \cdots y_N^0}(\mathfrak P_N^{a,b})$ is contained in 
$\BD[a, b]$ with weight at most \eqref{yij1toNm>0.01}. 
Moreover, by \eqref{yij1toNm>0.6},
its degree in $a$ and $b$ is at most $\sum_{1 \le i \le N} (m_i + n_i)$.

(ii) Let $a=b=0$. By \eqref{yij1toNm>0.99}, \eqref{yij1toNm>0.100} and 
\eqref{yij1toNm>0.101}, the coefficient of 
$$
\prod_{1 \le i < j \le N, 1 \le \ell \le 2} (z_\ell^{i,j})^{m_\ell^{i,j}}
(v_\ell^{i,j})^{n_\ell^{i,j}}
$$ 
in 
$
\Coe_{y_1^0 \cdots y_N^0}(\mathfrak P_{N}^{0,0})
$ 
is a finite $\Q$-linear combination of expressions of the form
$$ 
\prod_{1 \le i < j \le N}
\sum_{\substack{n_1^{i,j}, \ldots, n_{r^{i,j}}^{i,j} \ge 0, 
    d_1^{i,j}, \ldots, d_{r^{i,j}}^{i,j} \ge 1\\
    n_{r^{i,j}+1}^{i,j}, \ldots, n_{r^{i,j}+s^{i,j}}^{i,j} \ge 1, 
    d_{r^{i,j}+1}^{i,j}, \ldots, d_{r^{i,j}+s^{i,j}}^{i,j} \ge 1}}
  \prod_{k=1}^{r^{i,j}+s^{i,j}} (d_k^{i,j})^{t_k^{i,j}} 
\cdot q^{\sum_{k=1}^{r^{i,j}+s^{i,j}} n_k^{i,j} d_k^{i,j}}
$$
together with the condition that for every $1 \le i \le N-1$,
$$
\sum_{1 \le j < i} \bigg (\sum_{k=1}^{r^{j,i}} d_k^{j,i} 
    - \sum_{k=1}^{s^{j,i}} d_{r^{j,i}+k}^{j,i} \bigg )
= \sum_{N \ge j > i} \bigg (\sum_{k=1}^{r^{i,j}} d_k^{i,j} 
    - \sum_{k=1}^{s^{i,j}} d_{r^{i,j}+k}^{i,j} \bigg ).
$$
Here $t_k^{i,j} \ge 1$ for all $k$ and $1 \le i < j \le N$, and 
$$
  \sum_{1 \le i < j \le N} \sum_{k=1}^{r^{i,j}+s^{i,j}} (t_k^{i,j} +1)
= \sum_{1 \le i < j \le N, 1 \le \ell \le 2} 
   (m_\ell^{i,j} + n_\ell^{i,j}).
$$
Now (ii) follows from a simplified version (with $a=b=0$) of 
the proof of (i).
\end{proof}

\bigskip\noindent
{\bf Contributions}

Zhenbo Qin is the sole author of this paper.

\bigskip\noindent
{\bf Conflict of Interest}

The author declares no competing interests.

\bigskip\noindent
{\bf Data Availability}

Data sharing is not applicable to this article as no datasets were generated or 
analyzed during the current study.

\end{document}